\title{Arithmetical completeness for some extensions of the pure logic of necessitation}
\author{Haruka Kogure\footnote{Email:kogure@stu.kobe-u.ac.jp}
\footnote{Graduate School of System Informatics, Kobe University, 1-1 Rokkodai, Nada, Kobe 657-8501, Japan.}}
\date{}
\theoremstyle{plain}
\newtheorem{thm}{Theorem}[section]
\newtheorem*{thm*}{Theorem}
\newtheorem{prop}[thm]{Proposition}
\newtheorem{cor}[thm]{Corollary}
\newtheorem{fact}[thm]{Fact}
\newtheorem*{fact*}{Fact}
\newtheorem{prob}[thm]{Problem}
\newtheorem*{prob*}{Problem}
\newtheorem{cl}{Claim}[section]
\newtheorem{scl}{Subclaim}
\theoremstyle{definition}
\newtheorem{defn}[thm]{Definition}
\newcommand{\PA}{\mathsf{PA}}
\newcommand{\PR}{\mathrm{Pr}}
\newcommand{\PL}{\mathsf{PL}}
\newcommand{\Prf}{\mathrm{Prf}}
\newcommand{\Prov}{\mathrm{Prov}}
\newcommand{\Proof}{\mathrm{Proof}}
\newcommand{\Con}{\mathrm{Con}}
\newcommand{\gn}[1]{\ulcorner#1\urcorner}
\newcommand{\D}[1]{\mathbf{D#1}}
\newcommand{\Fml}{\mathrm{Fml}_{\mathcal{L}_A}}
\newcommand{\num}{\overline}
\newcommand{\NA}{\mathbf{NA}}
\newcommand{\True}{\mathrm{True}}
\newcommand{\LA}{\mathcal{L}_A}
\newcommand{\Sub}{\mathsf{Sub}}
\newcommand{\MF}{\mathsf{MF}}
\newcommand{\N}{\mathbf{N}}
\newcommand{\tc}{\vdash^{t}}
\begin{document}

\maketitle

\begin{abstract}
We investigate the arithmetical completeness theorems 
of some extensions of Fitting, Marek, and Truszczy\'{n}ski's pure logic of necessitation $\N$.
For $m,n \in \omega$, let $\NA_{m,n}$, which was introduced by Kurahashi and Sato, be the logic obtained from $\N$ by adding the axiom scheme $\Box^n A \to \Box^m A$.
In this paper, among other things, we prove that for each $m,n \geq 1$, the logic $\NA_{m,n}$ becomes a provability logic, that is, there exists a provability predicate $\PR_T(x)$ of $T$ whose $T$-verifiable modal principles are exactly the logic $\NA_{m,n}$.
\end{abstract}
%we clarify the situations that for each $m,n \in \omega$, the logic $\NA_{m,n}$, which was introduced by Kurahashi and Sato, becomes a provability logic of a $\Sigma_1$ provability predicate $\PR_T(x)$.
%We prove that for each $m,n \geq 1$, the logic $\NA_{m,n}$ is a provability logic of some $\Sigma_1$ provability predicate
%and that for each $m \geq 1$,  so is $\NA_{m,0}$ with respect to $\Sigma_1$-ill theories.
%In addition, we show that $\NA_{0,n}$ for $n \geq 1$ is not a provability logic for any $\PR_T(x)$ and if $T$ is $\Sigma_1$-sound, then $\NA_{m,0}$ for $m \geq 1$ is not a provability logic for any $\PR_T(x)$. 

\section{Introduction}\label{sec1}
Let $T$ denote a primitive recursively axiomatized consistent extension of Peano Arithmetic $\PA$.
We say that a formula $\PR_T(x)$ is a provability predicate of $T$ 
if $\PR_T(x)$ weakly represents the set of all theorems of $T$ in $\PA$, 
that is, for any formula $\varphi$, $T \vdash \varphi$ if and only if $\PA \vdash \PR_T(\gn{\varphi})$. 
Here, $\gn{\varphi}$ is the numeral of the G\"{o}del number of $\varphi$.
In a proof of the second incompleteness theorem, it is essential to construct a canonical $\Sigma_1$ provability predicate $\Prov_T(x)$ of $T$ 
satisfying the following conditions:
\begin{enumerate}
\item[$\D{2}$:]
$T \vdash \Prov_T(\gn{\varphi \to \psi}) \to (\Prov_T(\gn{\varphi}) \to \Prov_T(\gn{\psi}))$.
\item[$\D{3}$:]
$T \vdash \Prov_T(\gn{\varphi}) \to \Prov_T(\gn{\Prov_T(\gn{\varphi})})$.
\end{enumerate}
A provability predicate $\PR_T(x)$ can be considered as a modality. 
For example, the schemes
$\Box (A \to B) \to (\Box A \to \Box B)$ and $\Box A \to \Box \Box A$ are modal counterparts of $\D{2}$ and $\D{3}$ respectively.
We say that an \textit{arithmetical interpretation $f$ based on $\PR_T(x)$} is a mapping from  modal formulas to arithmetical sentences 
such that $f$ preserves 
 propositional connectives, and $f(\Box A)$ is $\PR_T(\gn{f(A)})$. 
For each provability predicate $\PR_T(x)$ of $T$,
let $\PL(\PR_T)$ denote the set of all modal formulas $A$ satisfying $T \vdash f(A)$ for all arithmetical interpretations $f$ based on $\PR_T(x)$.
The set $\PL(\PR_T)$ is called the \textit{provability logic} of $\PR_T(x)$.
A well-known result for the study of provability logics is Solovay's arithmetical completeness theorem \cite{sol}
saying that if $T$ is $\Sigma_1$-sound, then $\PL(\Prov_T)$ is exactly the modal logic $\mathbf{GL}$.
For each non-standard provability predicate, which is not the canonical provability predicate $\Prov_T(x)$, its provability logic may not be $\mathbf{GL}$, and there has been recent research on provability logics in this direction.
One goal of studies in this direction is to obtain an in-depth understanding of the following problem.

  \begin{prob}[{\cite[Problem 2.2]{Kur23}}]\label{prob2}
	For which modal logic $L$ is there a provability predicate $\PR_T(x)$
	such that $L = \PL(\PR_T)$?
	\end{prob}
This problem is exactly the question of which modal logics are provability logics.
Kurahashi \cite{Kur18-2, Kur23, Kur20} proved that modal logics such as $\mathbf{K}$ and $\mathbf{KD}$ are provability logics,
and Kogure and Kurahashi \cite{KK} proved that so are non-normal modal logics such as $\mathbf{MN}$ and $\mathbf{MN4}$.
Also, not all modal logics can be a provability logic of some provability predicate.
It is known that each of modal logics $\mathbf{KT}$, $\mathbf{K4}$, $\mathbf{KB}$, and $\mathbf{K5}$ is not a provability logic of any provability predicate (cf.~\cite{MON, Kur18}).
%Here, a Rosser provability predicate $\PR_{T}^{R}(x)$ is a provability predicate naturally expressing that ``there exists a $T$-proof $y$ of $x$ such that there is no $T$-proof of the negation of $x$ less than $y$''.  
%Provability predicates corresponding to $\mathbf{K}$ and $\mathbf{KD}$ are non-standard and
%provability logics of non-standard provability predicates are studied in \cite{Vis, Sha}.

Fitting, Marek, and Truszczy\'{n}ski \cite{fmt} introduced the pure logic of necessitation $\mathbf{N}$, which is obtained by removing the distribution axiom $\Box (A \to B) \to (\Box A \to \Box B)$ from the basic normal modal logic $\mathbf{K}$,
and Kurahashi proved that $\N$ is a provability logic of some provability predicate. 
Kurahashi \cite{Kur23} also introduced an extension $\mathbf{N4}$ of $\N$ and proved that $\mathbf{N4}$ has the finite frame property.
Here, a modal logic $L$ is said to have the finite frame property if for any formula $A$, $A$ is provable in $L$ if and only if $A$ is valid in all finite frames validating all theorems of $L$.
By using the finite frame property of $\mathbf{N4}$, he also proved that $\mathbf{N4}$ is a provability logic of some provability predicate. 
%and so are extensions $\mathbf{N4}$ and $\mathbf{NR}$
%of $\N$. 
%Here, $\mathbf{N4}$ and $\mathbf{NR}$ are obtained by adding the axiom scheme $\Box A \to \Box \Box A$ and the inference rure $\dfrac{\neg A}{\neg \Box A}$ to $\N$ respectively.
The finite frame property of $\mathbf{N4}$ was generalized by Kurahashi and Sato \cite{KS}.
For each pair of natural numbers $m$ and $n$, 
Kurahashi and Sato introduced the extension  $\NA_{m,n}$ obtained from $\N$ by adding
 the axiom scheme $\Box^n A \to \Box ^m A$,
and proved that if  $m \geq 1$ or $n \leq 1$, then $\NA_{m,n}$ has the finite frame property.

In the viewpoint of Problem \ref{prob2}, it is natural to ask the following question:
``For each $m,n \in \omega$, can the logic $\NA_{m,n}$ be a provability logic?''
In this paper, we clarify the situation on this problem.
If $m=n$, then $\NA_{m,n}$ is exactly $\N$, and Kurahashi proved that $\N$ is a provability logic of some $\Sigma_1$ provability predicate, so it suffices to consider the case $m \neq n$.
The following table summarizes the situation when $m \neq n$.
% \begin{table}[H]\label{tab}
% \centering
% \caption{Is there a $\Sigma_1$ $\PR_T(x)$ of $T$ such that $\PL(\PR_T)= \NA_{m,n}$?}
% \scriptsize{
% \begin{tabular}{|c|c|c|c|}
% \hline
%  & $m,n \geq 1$  & $m=0$ \& $n \geq 1$ & $m \geq 1$ \& $n=0$ \\ \hline
%  $T$ is $\Sigma_1$-sound & Yes (Theorems \ref{thm4-4}, \ref{thm4-5}) &No (Proposition \ref{not arith1}) & No (Proposition \ref{not arith2}) \\ \hline
%  $T$ is $\Sigma_1$-ill &  Yes (Theorems \ref{thm4-5}, \ref{thm4-6}) &No (Proposition \ref{not arith1})& Yes (Theorem \ref{thm4-7}) \\ \hline
% \end{tabular}
% }
% \end{table}
\begin{table}[H]\label{tab}
\centering
\caption{Is there a $\Sigma_1$ $\PR_T(x)$ of $T$ such that $\PL(\PR_T)= \NA_{m,n}$?}
\begin{tabular}{|c|c|c|}
\hline
 & $T$ is $\Sigma_1$-sound & $T$ is $\Sigma_1$-ill \\ \hline
  $m,n \geq 1$ & Yes (Theorems \ref{thm4-4}, \ref{thm4-5}) & Yes (Theorems \ref{thm4-5}, \ref{thm4-6}) \\ \hline
$m=0 \ \& \ n \geq 1$ & No (Proposition \ref{not arith1}) & No (Proposition \ref{not arith1}) \\ \hline
$m \geq 1$ \& $n=0$ & No (Proposition \ref{not arith2})  & Yes (Theorem \ref{thm4-7})
   \\ \hline
\end{tabular}
\end{table}

Here, a theory $T$ is $\Sigma_1$-ill if it is not $\Sigma_1$-sound.
The following shows the details of the theorems and propositions in the table.
\begin{itemize}
\item 
If $n > m  \geq 1$ and $T$ is $\Sigma_1$-sound, then $\NA_{m,n}$ is a provability logic of some $\Sigma_1$ provability predicate (Theorem \ref{thm4-4}).
\item 
If $m > n \geq 1$, then $\NA_{m,n}$ is a provability logic of some $\Sigma_1$ provability predicate (Theorem \ref{thm4-5}).
\item 
If $n > m  \geq 1$ and $T$ is $\Sigma_1$-ill, then $\NA_{m,n}$ is a provability logic of some $\Sigma_1$ provability predicate (Theorem \ref{thm4-6}).
\item 
If $m \geq 1$ and $T$ is $\Sigma_1$-ill, then $\NA_{m,0}$ is a provability logic of some $\Sigma_1$ provability predicate (Theorem \ref{thm4-7}).
\item 
If $n \geq 1$, then $\NA_{0,n}$ is not a provability logic of any provability predicate (Proposition \ref{not arith1}).
\item 
If $T$ is $\Sigma_1$-sound and $m \geq 1$, then $\NA_{m,0}$ is not a provability logic of any $\Sigma_1$ provability predicate (Proposition \ref{not arith2}).
\end{itemize}
Moreover, it was recently clarified by Kurahashi  \cite{Kur25} that the principle $\Box^{n} A \rightarrow \Box^{m} A$ is closely related to the second incompleteness theorem. 
In his study, the condition $\mathbf{D3}^{n}_{m} : T \vdash \PR_T^{n}(\gn{\varphi}) \rightarrow \PR_T^{m}(\gn{\varphi})$ is analyzed from the viewpoint of the second incompleteness theorem, and it is shown that $\PR_{T}(x)$ satisfying $\mathbf{D3}^{n}_{m}$, where $m> n \geq 1$, together with several additional conditions yields the second incompleteness theorem.

The organization of this paper is as follows. 
Section \ref{sec2} presents the preliminaries and background. 
Section \ref{sec3} states the main results of this paper.
Section \ref{sec4} and \ref{sec5} are devoted to the proofs of our results.
Finally, Section \ref{sec6} provides the conclusion of this paper. In particular, we discuss the relationship between our results on $\NA_{m,n}$ and
recent study by Kurahashi \cite{Kur25}. 

\section{Preliminaries and Background}\label{sec2}
For general background on arithmetic and the incompleteness theorems, see e.g.~\cite{HP93,Lin}.
Throughout this paper, let $T$ denote a primitive recursively axiomatized consistent extension
of Peano arithmetic $\PA$ in the language $\LA$ of first-order arithmetic.
We say that $T$ is $\Sigma_1$-ill if $T$ is not $\Sigma_1$-sound.
Let $\omega$ be the set of all natural numbers. For each $n \in \omega$,  $\num{n}$ denotes the numeral of $n$.
In the present paper, we fix a natural G\"{o}del numbering such that if $\alpha$ is a proper sub-expression of a finite sequence $\beta$ of $\LA$-symbols, 
then the G\"{o}del number of $\alpha$ is less than that of $\beta$. 
For each formula $\varphi$, let $\gn{\varphi}$ be the numeral of the G\"{o}del number of $\varphi$.
Let $\{ \xi_t \}_{t \in \omega}$ denote the repetition-free primitive recursive enumeration of all $\LA$-formulas in ascending order of G\"{o}del numbers. 
We note that if $\xi_u$ is a proper subformula of $\xi_v$, then $u<v$.

The classes of formulas $\Delta_0$, $\Sigma_1$, and $\Pi_1$ are defined, as usual, as the primitive recursive classes of $\Delta_0$, $\Sigma_1$, and $\Pi_1$ formulas, respectively (see \cite{HP93}).
The primitive recursiveness of these classes will be used in Section \ref{4.1}.
%注意としてロジカルなもので閉じると計算可能でないので，そこは区別する．
% Let $\Delta_0$ be the set of all formulas whose every quantifier is bounded.
% The classes $\Sigma_1$ and $\Pi_1$ are defined as the smallest classes satisfying the following conditions:
% \begin{itemize}
% \item 
% $\Delta_0 \subseteq \Sigma_1 \cap \Pi_1$.
% \item 
% $\Sigma_1$ (resp.~$\Pi_1$) is closed under conjunction, disjunction and existential (resp. universal) quantification.
% \item 
% If $\varphi$ is $\Sigma_1$ (resp.~$\Pi_1$), then $\neg \varphi$ is $\Pi_1$ (resp.~$\Sigma_1$).
% \item 
% If $\varphi$ is $\Sigma_1$ (resp.~$\Pi_1$) and $\psi$ is $\Pi_1$ (resp.~$\Sigma_1$), then $\varphi \to \psi$ is $\Pi_1$ (resp.~$\Sigma_1$).
% \end{itemize}
% The classes $\Delta_0$, $\Sigma_1$ and $\Pi_1$ are not closed under logical equivalence, and these classes are primitive recursive.
We say that a formula $\varphi$ is $\Delta_1(\PA)$ if $\varphi$ is $\Sigma_1$ and $\varphi$ is $\PA$-equivalent to some $\Pi_1$ formula.
Let $\Fml(x)$ and $\Sigma_1(x)$ be $\Delta_1(\PA)$ formulas naturally expressing that ``$x$ is an $\LA$ formula'' and ``$x$ is a $\Sigma_1$ sentence'', respectively. 
Let $\True_{\Sigma_1}(x)$ be a $\Sigma_1$ formula naturally saying that ``$x$ is a true $\Sigma_1$ sentence'' and
we may assume that $\True_{\Sigma_1}(x)$ is of the form $\exists y \eta (x,y)$ for some $\Delta_0$ formula $\eta(x,y)$.
The formula $\True_{\Sigma_1}(x)$ satisfies the condition that for any $\Sigma_1$ sentence $\varphi$, $\PA \vdash \varphi \leftrightarrow \True_{\Sigma_1}(\gn{\varphi})$
(see~\cite{HP93, Kay91}). 
\subsection{Provability predicate}
We say that a formula $\PR_T(x)$ is a \textit{provability predicate} of $T$ 
if for any formula $\varphi$, $T \vdash \varphi$ if and only if $\PA \vdash \PR_T(\gn{\varphi})$. Note that the definition of provability predicates is not restricted to $\Sigma_1$ formula
so that we can also discuss provability predicates that are not $\Sigma_1$ (see Problem~\ref{prob3}).
Let $\mathbb{N}$ be the standard model of arithmetic.
We note that if $\PR_T(x)$ is a $\Sigma_1$ formula, 
then the condition that $\PR_T(x)$ is a provability predicate of $T$ is equivalent to the condition that for any formula $\varphi$, $T \vdash \varphi$  if and only if $\mathbb{N} \models \PR_T(\gn{\varphi})$.
Also, to ensure that provability predicates (not necessarily $\Sigma_1$) satisfy the condition $\mathbf{D1}$,
where $\mathbf{D1}$ is the requirement that for any formula $\varphi$, 
$T \vdash \varphi$ implies $T \vdash \PR_T(\gn{\varphi})$, we adopt $\PA$-based definition of provability predicates.

For each formula $\tau(v)$ representing the set of all axioms of $T$ in $\PA$,
we can construct a formula $\Prf_{\tau}(x,y)$ naturally expressing that 
``$y$ is the G\"{o}del number of a proof of $x$ from the set of all sentences satisfying $\tau(v)$'',
 \footnote{In some textbook (e.g. \cite{HP93}), $\Prf_{\tau}(y,x)$ expresses that ``$y$ is the G\"{o}del number of a proof of $x$ from the set of all sentences satisfying $\tau(v)$.''  
 Our convention for $\Prf_{\tau}(x,y)$ is the reverse, following \cite{FEf,Lin}.}
and we can define a provability predicate $\PR_{\tau}(x)$ as $\exists y \Prf_{\tau}(x,y)$ (see~\cite{FEf,Lin}). 
Throughout the paper, when we say that 
$p$ is a proof, we always mean that $p$ is the Gödel number coding that proof.
If $\tau(v)$ is a $\Delta_1(\PA)$ formula, then we can define $\Prf_{\tau}(x,y)$ as a $\Delta_1(\PA)$ formula.
Let $\Con_{\tau}$ denote $\neg \PR_{\tau}(\gn{0=1})$.
In this paper, we assume that $\Prf_{\tau}(x,y)$ is a single-conclusion, 
that is, $\PA \vdash \forall y \forall x_1 \forall x_2 (\Prf_{\tau}(x_1,y) \wedge \Prf_{\tau}(x_2,y) \to x_1=x_2)$ holds.
We note that $\Prf_{\tau}(x)$ satisfies $\PA \vdash \forall x \bigl( \PR_{\tau}(x) \to \forall z \exists y>z \Prf_{\tau}(x,y) \bigr)$.
In addition, $\PR_{\tau}(x)$ satisfies the following conditions (see~\cite{FEf,Lin}):
\begin{itemize}
\item 
$\PA \vdash \forall x \forall y \bigl(\PR_{\tau}(x \dot{\to} y) \to \bigl(\PR_{\tau}(x) \to \PR_{\tau}(y)\bigr)\bigr)$.
\item 
If $\sigma(x)$ is a $\Sigma_1$ formula, then $\PA \vdash \forall x \bigl( \sigma (x) \to \PR_{\tau}(\gn{\sigma(\dot{x})}) \bigr)$.
\end{itemize}
Here, $x \dot{\to} y$ and $\gn{\sigma(\dot{x})}$ are  primitive recursive terms 
corresponding to  primitive recursive functions calculating the G\"{o}del number of $\varphi \to \psi$ from those of $\varphi$ and $\psi$ 
and calculating the G\"{o}del number of $\sigma(\num{n})$ from $n \in \omega$, respectively.
The first condition is a stronger version of the condition $\D{2}$.
If $\PR_{\tau}(x)$ is $\Sigma_1$, then $\PR_{\tau}(x)$ satisfies the condition $\D{3}$.
Here, the conditions $\D{2}$ and $\D{3}$ are introduced in Section \ref{sec1}.
From now on, we fix a $\Delta_1(\PA)$ formula $\tau(x)$ representing $T$ in $\PA$.
Let $\Proof_T(x,y)$ and $\Prov_T(x)$ be the $\Delta_1(\PA)$ formula $\Prf_{\tau}(x,y)$ and the $\Sigma_1$ provability predicate $\PR_{\tau}(x)$ respectively. Let $\Con_T$ denote $\neg \Prov_T(\gn{0=1})$.

\subsection{Provability logic}
The language of modal propositional logic $\mathcal{L}_{\Box}$ consists of  
propositional variables, the logical constant $\bot$, the logical connectives $\neg, \wedge, \vee, \to$,
and the modal operator $\Box$.
Let $\MF$ be the set of all $\mathcal{L}_{\Box}$-formulas.
For each $A \in \MF$, let $\Sub(A)$ denote the set of all subformulas of $A$.
For each $A \in \MF$ and $n \in \omega$, we define  $\Box^n A$ inductively as follows:
Let $\Box^0 A$ be $A$ and let $\Box^{n+1} A$ be $\Box \Box^n A$.
The axioms of the basic modal logic $\mathbf{K}$ consists of propositional tautologies in the language  $\mathcal{L}_\Box$
and the distribution axiom scheme $\Box (A \to B) \to (\Box A \to \Box B)$. 
The inference rules of modal logic $\mathbf{K}$ consists of Modus Ponens $(\mathrm{MP}) \ \dfrac{A \quad A \to B}{B}$
and Necessitation $(\mathrm{Nec}) \ \dfrac{A}{\Box A}$.
%The logic $\mathbf{K4}$ is obtained by adding the axiom scheme $\Box A \to \Box  \Box A$ to $\mathbf{K}$.
%The schemes $\Box (A \to B) \to (\Box A \to \Box B)$ and $\Box A \to \Box  \Box A$ are modal counter parts $\D{2}$ and $\D{3}$ respectively.

For each provability predicate $\PR_T(x)$ of $T$,
we say that a mapping $f$ from $\MF$ to a set of $\LA$-sentences 
is an \textit{arithmetical interpretation} based on $\PR_T(x)$ if $f$ satisfies the following clauses:
\begin{itemize}
\item 
$f(\bot)$ is $0=1$, 
\item
$f(\neg A)$ is $\neg f(A)$,
\item 
$f(A \circ B)$ is $f(A) \circ f(B)$ for $\circ \in \{ \wedge , \vee , \to \}$, and
\item 
$f(\Box A)$ is $\PR_T(\gn{f(A)})$.
\end{itemize}
For any provability predicate $\PR_T(x)$ of $T$, 
let $\PL(\PR_T)$ denote the set of all $\mathcal{L}_{\Box}$-formulas $A$ such that 
for any arithmetical interpretation $f$ based on $\PR_T(x)$, $T \vdash f(A)$.
We call the set $\PL(\PR_T)$ \textit{provability logic} of $\PR_T(x)$ (For an introduction to provability logic, see \cite{Bool,Smo}).
%Kurahashi \cite{Kur23} states that we can approach the study of provability logics 
%from two directions as follows. 
%\begin{prob}[{\cite[Problem 2.1]{Kur23}}]
%For each provability predicate $\PR_T(x)$ of $T$, 
%how is $\PL(\PR_T)$ axiomatized and what does it have? 
%\end{prob}
%\begin{prob}[{\cite[Problem 2.2]{Kur23}}]
%For which modal logic $L$ is there a provability predicate $\PR_T(x)$
%such that $L = \PL(\PR_T)$?
%\end{prob}

A well-known result in the study of provability logics is Solovay's arithmetical completeness theorem.
The modal logic $\mathbf{GL}$ is obtained by adding axiom scheme $\Box (\Box A \to A) \to \Box A$ to $\mathbf{K}$. 
\begin{thm}[Solovay \cite{sol}]
If $T$ is $\Sigma_1$-sound, then $\PL(\Prov_T) = \mathbf{GL}$.
\end{thm}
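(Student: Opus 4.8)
The plan is to establish the two inclusions $\mathbf{GL} \subseteq \PL(\Prov_T)$ and $\PL(\Prov_T) \subseteq \mathbf{GL}$ separately. For the soundness direction, I would check that an arbitrary arithmetical interpretation $f$ sends each axiom of $\mathbf{GL}$ to a $T$-theorem and that the inference rules preserve this. The rule $(\mathrm{MP})$ is immediate, and $(\mathrm{Nec})$ is sound because $T \vdash f(A)$ implies $\PA \vdash \Prov_T(\gn{f(A)})$, i.e.\ $\PA \vdash f(\Box A)$, since $\Prov_T$ is a provability predicate. The distribution scheme reduces to $\D{2}$, which $\Prov_T$ satisfies. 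The one substantial axiom is L\"ob's scheme $\Box(\Box A \to A) \to \Box A$, whose arithmetical soundness under $f$ is precisely the formalized L\"ob theorem, derivable from $\D{2}$, $\D{3}$, and the fixed-point lemma.

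The completeness direction is where the real work lies, and the plan is to carry out Solovay's embedding. Assume $A \notin \mathbf{GL}$. Since $\mathbf{GL}$ is complete with respect to finite irreflexive transitive trees, there is such a tree with forcing relation $\Vdash$ whose root $r$ refutes $A$ (pass to the subtree generated by a refuting node). I adjoin a new root $0$ as the parent of $r$, obtaining nodes $\{0,1,\dots,N\}$. The heart of the construction is a \emph{Solovay function} $h \colon \omega \to \{0,\dots,N\}$, defined via the recursion theorem applied to a primitive recursive specification, that starts at $0$ and moves upward to a child of its current node exactly when $T$ certifies that it must leave that node; single-valuedness of $\Proof_T(x,y)$ makes the prescription well defined. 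Writing $\ell$ for $\lim_s h(s)$, the Solovay sentences $S_i$ express $\ell = i$.

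Next I would verify, inside $\PA$, the interlocking properties of these sentences: the limit exists and is unique, so $\PA \vdash \bigvee_i S_i$ and $\PA \vdash \neg(S_i \wedge S_j)$ for $i \neq j$; the consistency property $\PA \vdash S_i \to \neg \Prov_T(\gn{\neg S_j})$ whenever $i \prec j$; and the provable-cover property $\PA \vdash S_i \to \Prov_T(\gn{\bigvee_{i \prec j} S_j})$ for each non-leaf $i$. Setting $f(p) = \bigvee_{i \Vdash p} S_i$, a truth lemma proved by induction on a modal formula $B$ then shows that for each $i \geq 1$, $T \vdash S_i \to f(B)$ when $i \Vdash B$ and $T \vdash S_i \to \neg f(B)$ when $i \not\Vdash B$; the modal step of this induction is exactly where the consistency and provable-cover properties are used, together with $\D{2}$ and formalized necessitation.

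The main obstacle is the final transfer from the embedded model back to $T$ itself, and this is where $\Sigma_1$-soundness enters essentially. Since $S_0$ is the $\Pi_1$ sentence asserting that $h$ never leaves the root, its negation $\neg S_0$ is $\Sigma_1$; combining the provable-cover property at the root (whose unique child is $r$) with $\Sigma_1$-completeness and the tautology $S_0 \vee \neg S_0$ yields $\PA \vdash \Prov_T(\gn{\neg S_0})$, so $T \vdash \neg S_0$, and $\Sigma_1$-soundness then forces $\neg S_0$ to hold in $\mathbb{N}$. Hence the real computation leaves $0$ and, by a consistency argument, settles at $r$, so that $\mathbb{N} \models S_r$ and $S_r$ is consistent with $T$; combined with the truth-lemma instance $T \vdash S_r \to \neg f(A)$ this gives $T \nvdash f(A)$, whence $A \notin \PL(\Prov_T)$. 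The technically demanding parts are formalizing ``the limit of $h$ equals $j$'' so that the interlocking Solovay properties become provable in $\PA$, and securing the consistency of $S_r$ with $T$ that licenses the final unprovability; once these are in place, the structural induction and the soundness arguments are routine.
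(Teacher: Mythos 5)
The paper gives no proof of this theorem --- it is quoted from Solovay --- so your proposal must be measured against the standard Solovay argument that it reconstructs. Your soundness half and the overall architecture of the completeness half (finite tree countermodel, new root $0$, Solovay function $h$ defined by the recursion theorem, sentences $S_i$, truth lemma for $i \geq 1$) are indeed the standard proof. But the endgame contains a genuine error, not a slip. The provable-cover property $\PA \vdash S_i \to \Prov_T(\gn{\bigvee_{i \prec j} S_j})$ does \emph{not} hold ``for each non-leaf $i$'': it holds exactly for $i \geq 1$ (and then even for leaves, with the empty disjunction read as $0=1$), because its proof uses that $h$'s arrival at $i$ was triggered by an actual $T$-proof of $\neg S_i$, so that under $S_i$ one has $\Prov_T(\gn{\neg S_i})$; no such trigger exists at the starting node $0$. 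At $i = 0$ the property is in fact refutable under your own hypotheses: combined with the truth of $S_0$ it would make $T$ prove the false $\Sigma_1$ sentence $\neg S_0$, contradicting $\Sigma_1$-soundness.

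Consequently your final transfer is inverted. You derive $T \vdash \neg S_0$, hence $\mathbb{N} \models \neg S_0$, and conclude that the real computation ``settles at $r$'' with $S_r$ true and consistent with $T$. This is self-defeating: every actual move of $h$ into a node $i \geq 1$ is triggered by an actual $T$-proof of $\neg S_i$, so $\mathbb{N} \models S_r$ would give $T \vdash \neg S_r$, i.e.\ $S_r$ is \emph{inconsistent} with $T$ --- and then the truth-lemma instance $T \vdash S_r \to \neg f(A)$ is vacuous and yields no unprovability of $f(A)$. The correct endgame is the opposite one: show $\mathbb{N} \models S_0$, and this is where $\Sigma_1$-soundness genuinely enters --- if the limit were some $i \geq 1$, then $T \vdash \neg S_i$ (proof-triggered move) together with provable $\Sigma_1$-completeness applied to the true fact $h(\num{s}) = \num{i}$ gives $T \vdash \bigvee_{j \succ i} S_j$, so $T$ proves the false $\Sigma_1$ sentence that $h$ eventually climbs strictly above $i$. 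Then instantiate at the root the consistency property you correctly listed, $S_0 \to \neg \Prov_T(\gn{\neg S_r})$, to obtain $T \nvdash \neg S_r$; combining the consistency of $T + S_r$ with $T \vdash S_r \to \neg f(A)$ yields $T \nvdash f(A)$. A secondary point: $h$ must be allowed to jump to arbitrary $\prec$-successors, not only to immediate children as you stipulate, since the consistency property for non-immediate pairs $i \prec j$ (which the box case of the truth lemma needs) is otherwise not forthcoming.
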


Fitting, Marek, and Truszczy\'{n}ski \cite{fmt} introduced the pure logic of necessitation $\mathbf{N}$,
which is obtained by removing the distribution axiom $\Box (A \to B) \to (\Box A \to \Box B)$ from $\mathbf{K}$.
%In this paper, we investigate some extensions of $\mathbf{N}$ concering the second problem.
In Kurahashi \cite{Kur23}, it is proved that some extensions of $\mathbf{N}$ can be a provability logic of a provability predicate $\PR_T(x)$ of $T$.
The modal logic $\mathbf{N4}$ is obtained by adding the axiom scheme
$\Box A \to \Box \Box A$.
The following are obtained in \cite{Kur23}.
%The  modal logics $\mathbf{N4}$, $\mathbf{NR}$ and $\mathbf{NR4}$ are all introduced in Kurahashi \cite{Kur23}.
\begin{itemize}
\item 
For each $L \in \{ \mathbf{N}, \mathbf{N4} \}$,
there exists a $\Sigma_1$ provability predicate $\PR_T(x)$ of $T$
such that $\PL(\PR_T)= L$.
\item 	
$\N = \bigcap \{\PL(\PR_T) \mid \PR_T(x)$  is a provability predicate of $T\}$.
\item 
$\mathbf{N4} = \bigcap \{\PL(\PR_T) \mid \PR_T(x)$  is a provability predicate of $T$ satisfying $\D{3} \}$.

\end{itemize}

\subsection{The logic $\NA_{m,n}$}
Fitting, Marek, and Truszczy\'{n}ski introduced a relational semantics for $\mathbf{N}$.
\begin{defn}[$\N$-frames and $\N$-models]
\leavevmode

\begin{itemize}
\item 
A tuple $(W, \{ \prec_B \}_{B \in \MF} )$ is called an $\N$-\textit{frame}
if $W$ is non-empty set and $\prec_B$ is a binary relation on $W$ for every $B \in \MF$. 
\item 
An $\N$-frame $(W, \{ \prec_B \}_{B \in \MF} )$ is finite if $W$ is finite.
\item 
A triple $(W, \{ \prec_B \}_{B \in \MF}, \Vdash)$ is called an $\N$-\textit{model}
if $(W, \{ \prec_B \}_{B \in \MF} )$ is an $\N$-frame and $\Vdash$ is a binary relation
between $W$ and $\MF$ satisfying the usual conditions for propositional connectives and the following condition:
\[
x \Vdash \Box B \iff \forall y \in W (x \prec_B y \Longrightarrow y \Vdash B).
\]
\item 
A formula $A$ is \textit{valid} in an $\N$-model $(W, \{ \prec_B \}_{B \in \MF}, \Vdash)$
if $x \Vdash A$ for every $x \in W$.
\item 
A formula $A$ is \textit{valid} in an $\N$-frame $ \mathcal{F} = (W, \{ \prec_B \}_{B \in \MF})$
if $A$ is valid in any $\N$-model $(\mathcal{F}, \Vdash)$ based on $\mathcal{F}$.
\end{itemize}
\end{defn}
Note that the main difference from the usual Kripke semantics is that each formula has an accessibility relation.
Fitting, Marek, and Truszczy\'{n}ski proved that
$\N$ is sound and complete with respect to the above semantics.
In addition, $\N$ has the finite frame property with respect to this semantics.
\begin{thm}[{\cite[Theorem 3.6 and Theorem 4.10]{fmt}}]
For any $\mathcal{L}_\Box$-formula $A$, the following are equivalent:
\begin{enumerate}
\item 
$\N \vdash A$.
\item 
$A$ is valid in all $\N$-frames.
\item
$A$ is valid in all finite $\N$-frames.
\end{enumerate}
\end{thm}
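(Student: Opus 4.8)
The plan is to prove the cycle of implications $(1)\Rightarrow(2)\Rightarrow(3)\Rightarrow(1)$. The implication $(2)\Rightarrow(3)$ is immediate, since every finite $\N$-frame is in particular an $\N$-frame, so validity in all $\N$-frames entails validity in all finite ones. The substantive work is thus the soundness direction $(1)\Rightarrow(2)$ and the finite completeness direction $(3)\Rightarrow(1)$. Once both are in hand the three conditions are equivalent, and in particular completeness with respect to arbitrary $\N$-frames, i.e. $(2)\Rightarrow(1)$, comes for free via $(2)\Rightarrow(3)\Rightarrow(1)$, so no infinite canonical model is required.

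For $(1)\Rightarrow(2)$ I would argue by induction on the length of a derivation in $\N$, showing that each theorem is valid in every $\N$-model $(W,\{\prec_B\}_{B\in\MF},\Vdash)$. Propositional tautologies are valid because $\Vdash$ respects the Boolean connectives, and $(\mathrm{MP})$ preserves truth at each world. The only genuinely modal step is $(\mathrm{Nec})$: assuming $A$ is valid in the model, we have $y\Vdash A$ for every $y\in W$, so for each $x$ the defining condition $\forall y\,(x\prec_A y\Rightarrow y\Vdash A)$ holds trivially, its consequent being always true, whence $x\Vdash\Box A$. Since $\N$ omits the distribution axiom there is nothing further to check; the formula-indexed relations $\prec_B$ are exactly what make $(\mathrm{Nec})$ sound without validating distribution.

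For $(3)\Rightarrow(1)$ I would prove the contrapositive by a finite Henkin-style construction, which simultaneously yields the finite frame property. Fix $A$ with $\N\nvdash A$ and let $\Sigma=\Sub(A)$, a finite subformula-closed set. Take $W$ to be the finite set of maximal $\N$-consistent subsets of $\Sigma$ (the $\Sigma$-atoms); for $B$ with $\Box B\in\Sigma$ define $X\prec_B Y$ iff $(\Box B\in X\Rightarrow B\in Y)$, define $\prec_B$ arbitrarily when $\Box B\notin\Sigma$, and let each atom force exactly the propositional variables it contains, so $X\Vdash p\iff p\in X$. The core of the argument is the truth lemma $X\Vdash C\iff C\in X$ for all $C\in\Sigma$, proved by induction on $C$, the Boolean cases being routine from the defining consistency of the atoms. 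Since $\N\nvdash A$ there is some $X_0\in W$ with $A\notin X_0$, whence $X_0\nVdash A$ and $A$ fails in this finite $\N$-model.

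The main obstacle is the modal case $C=\Box B$ of the truth lemma, and within it the subcase $\Box B\notin X$. Here $\prec_B$ is by definition the total relation out of $X$, so establishing $X\nVdash\Box B$ requires producing a world $Y$ with $B\notin Y$. This is precisely where the weakness of $\N$ enters: from $\Box B\notin X$ together with $\Box B\in\Sigma$ one infers $\N\nvdash\Box B$, hence $\N\nvdash B$ by $(\mathrm{Nec})$ read contrapositively, so $\neg B$ is $\N$-consistent and extends to a $\Sigma$-atom $Y$ omitting $B$. I expect the only other points demanding care to be the verification that $(W,\{\prec_B\},\Vdash)$ is a genuine $\N$-model and that the truth lemma's Boolean clauses propagate correctly through the atoms.
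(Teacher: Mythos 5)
The paper itself gives no proof of this statement: it is imported wholesale from Fitting--Marek--Truszczy\'{n}ski (their Theorems 3.6 and 4.10), so your proposal can only be measured against the cited source rather than against anything in the text. Your argument is correct, and it is a legitimately streamlined route. The soundness half is handled properly: you correctly identify that the induction hypothesis must be validity-in-a-model (not truth-at-a-world) so that $(\mathrm{Nec})$ goes through, and that the formula-indexed relations $\prec_B$ make $(\mathrm{Nec})$ sound without forcing distribution. The completeness half uses the canonical relation $X \prec_B Y \iff (\Box B \in X \Rightarrow B \in Y)$ on $\Sub(A)$-atoms, and you isolate the one genuinely modal point accurately: when $\Box B \notin X$ the relation $\prec_B$ is total out of $X$, maximality of $X$ within $\Sub(A)$ converts $\Box B \notin X$ into $\N \nvdash \Box B$, and the contrapositive of $(\mathrm{Nec})$ yields $\N \nvdash B$, whence an atom omitting $B$ exists. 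Two small points deserve explicit statement in a full write-up: the Lindenbaum-style extension within the finite set $\Sub(A)$ should be spelled out (an atom is a subset $Y$ of $\Sub(A)$ such that $Y$ together with the negations of the remaining members of $\Sub(A)$ is $\N$-consistent; this is also what underwrites the Boolean truth-lemma clauses and the maximality argument just mentioned), and the nonemptiness of $W$ should be noted. As the citation indicates, the source proves the result in two installments---completeness with respect to all $\N$-frames and, separately, the finite frame property---whereas your single finite Henkin construction proves $(3)\Rightarrow(1)$ directly and obtains $(2)\Rightarrow(1)$ for free via the cycle, avoiding any infinite canonical model; what that shortcut forgoes is strong completeness (derivability from arbitrary sets of hypotheses), which the canonical-model route delivers but which is not needed for the statement at hand.
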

%Though each $\N$-frame has infinitely many binary relations $\{ \prec_B\}_{B \in \MF}$,
%to determine the truth of a single formula $A$, it suffices to consider only a finite subset of $\{ \prec_B\}_{B \in \MF}$.
%\begin{prop}[{\cite[Theorem 4.11]{fmt}}]
%Let $A \in \MF$. Let $(W, \{ \prec_B\}_{B \in \MF}, \Vdash)$ and $(W, \{ \prec_{B}'\}_{B \in \MF}, \Vdash')$ be 
%$\N$-models satisfying $\prec_B = \prec_{B}'$ for every $\Box B \in \Sub(A)$.
%Then, for any $x \in W$, $x \Vdash A $ if and only if $x \Vdash' A$.
%\end{prop}

For each $m, n \in \omega$, the logic $\N \mathbf{A}_{m,n}$ is obtained by adding the axiom scheme 
\[
\mathrm{Acc}_{m,n} : \Box^n A \to \Box ^m A
\]
to $\N$, and the logic $\mathbf{N}^+ \mathbf{A}_{m,n}$ is obtained by adding the inference rule
\[
(\mathrm{Ros}^{\Box}) \dfrac{\neg \Box A}{\neg \Box \Box A}
\]
to $\N \mathbf{A}_{m,n}$.
Here, the rule $\mathrm{Ros}^{\Box}$ is a weak variant of the rule $(\mathrm{Ros}) \dfrac{\neg A}{\neg \Box A}$, originally introduced in \cite{Kur23} to investigate Rosser provability predicates.
Extensions $\N \mathbf{A}_{m,n}$ and $\mathbf{N}^+ \mathbf{A}_{m,n}$ of $\N$ are introduced in Kurahashi and Sato \cite{KS}.
It is known that $\NA_{m,n}$ is exactly $\mathbf{N}^+ \mathbf{A}_{m,n}$ for $m \geq 1$ or $n \leq 1$.
\begin{prop}[{\cite[Proposition 4.2]{KS}}]
Let $m \geq 1$ or $n \leq 1$.
The rule $\mathrm{Ros}^\Box$ is admissible in $\N \mathbf{A}_{m,n}$, that is, adding the rule $\mathrm{Ros}^\Box$ to $\N \mathbf{A}_{m,n}$ does not yield any new theorems.
Consequently, $\NA_{m,n}$ is exactly $\N^+ \mathbf{A}_{m,n}$.
\end{prop}

Kurahashi and Sato also introduced the notion $(m,n)$-\textit{accessibility}.
\begin{defn}
Let $\mathcal{F}= (W, \{ \prec_B \}_{B \in \MF})$ be an $\N$-frame.
\leavevmode

\begin{itemize}
\item 
Let $x, y \in W$, $B \in \MF$ and $k \in \omega$. We define $x \prec_{B}^k y$ as follows:
\[x \prec_{B}^k y : \iff \begin{cases} 
x=y & \text{if} \ k=0 \\
\exists w \in W( x \prec_{\Box^{k-1} B} w \prec_{B}^{k-1} y ) & \text{if} \ k \geq 1.
\end{cases}
\]

\item 
We say that the frame $\mathcal{F}$ is $(m,n)$-\textit{accessible} 
if  for any $x, y \in W$ and $B \in \MF$, if $x \prec_{B}^m y$, then $x \prec_{B}^n y$.
%\item 
%Let $\Gamma \subseteq \MF$.
%We say that the frame $\mathcal{F}$ is $\Gamma$-$(m,n)$-\textit{accessible} 
%if $\mathcal{F}$ is $B$-$(m,n)$-accessible for every $\Box^m B \in \Gamma$.
%\item 
%We say that the frame $\mathcal{F}$ is $(m,n)$-\textit{accessible} if $\mathcal{F}$ is $\MF$-$(m,n)$-accessible.
\end{itemize}
\end{defn}
%In particular, $(W, \{\prec_{B}\}_{B \in \MF})$ is $(2,1)$-accessible is that 
%for each $x,y, z \in W$ and $B \in \MF$, if $x \prec_{\Box B} y$ and $y \prec_B z$, then $x \prec_B z$.
%The notion $(m,n)$-accessibility is a generalization of that of transitivity. 
%It is known that $\N \mathbf{4}$ is sound and complete with respect to $(2,1)$-accessible $\N$-frames (Kurahashi \cite[Theorem 3.13]{Kur23}).
%We predict that $\N \mathbf{A}_{m,n}$ is sound and complete with respect to $(m,n)$-accessible $\N$
%-frames, but for $n \geq 2$, $\N \mathbf{A}_{0,n}$ is incomplete with respect to $(0,n)$-accessible $\N$-frames (\cite[Corrolary 4.6]{KS}).

For $m,n \in \omega$, the logic $\mathbf{N}^+ \mathbf{A}_{m,n}$ is sound and complete and has the finite frame property with respect to $(m,n)$-accessible $\N$-frames.
\begin{fact}[The finite frame property of $\mathbf{N}^+ \mathbf{A}_{m,n}$ {\cite[Theorem 5.1]{KS}}] \label{fact2-7}
Let $A \in \MF$ and $m,n \in \omega$. The following are equivalent:
\begin{enumerate}
\item 
$\mathbf{N}^+ \mathbf{A}_{m,n} \vdash A$.
\item 
$A$ is valid in all $(m,n)$-accessible $\N$-frames.
\item 	
$A$ is valid in all finite $(m,n)$-accessible $\N$-frames.

\end{enumerate}
	
\end{fact}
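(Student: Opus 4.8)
The plan is to prove the cycle $(1)\Rightarrow(2)\Rightarrow(3)\Rightarrow(1)$. The implication $(2)\Rightarrow(3)$ is trivial, since every finite $(m,n)$-accessible $\N$-frame is in particular an $(m,n)$-accessible $\N$-frame. All the real work lies in the soundness direction $(1)\Rightarrow(2)$ and in the finite completeness direction $(3)\Rightarrow(1)$.

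For soundness I would first isolate the auxiliary lemma that for every $\N$-model, every world $x$, every $B\in\MF$, and every $k\in\omega$,
\[
x\Vdash\Box^k B \iff \forall y\,(x\prec_B^k y \Rightarrow y\Vdash B),
\]
proved by induction on $k$ from the truth clause for $\Box$ and the recursive definition of $\prec_B^k$. Granting it, the validity of $\mathrm{Acc}_{m,n}$ in any $(m,n)$-accessible frame is immediate: if $x\Vdash\Box^n A$ and $x\prec_A^m y$, then $(m,n)$-accessibility (applied to $B=A$, noting $\Box^m A\in\MF$) yields $x\prec_A^n y$, hence $y\Vdash A$, so $x\Vdash\Box^m A$. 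Because $\N$ is already sound for all $\N$-frames and the rules $\mathrm{MP}$ and $\mathrm{Nec}$ plainly preserve validity, the only remaining point is the rule $\mathrm{Ros}^\Box$. Whenever $m\geq1$ or $n\leq1$ this rule is admissible, so $\NA_{m,n}$ coincides with $\N\mathbf{A}_{m,n}$ and nothing new is required; the genuine case is $m=0$ and $n\geq2$. There I would show, by chaining together several instances of the condition $x\prec_B^n x$ supplied by $(0,n)$-accessibility, that the relevant relations admit the witnessing successors forcing $\neg\Box\Box A$ to be valid whenever $\neg\Box A$ is.

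For $(3)\Rightarrow(1)$ I would argue by contraposition and build a finite falsifying model. Assume $\NA_{m,n}\not\vdash A$. Fix a finite set $X$ of $\mathcal{L}_\Box$-formulas containing $\Sub(A)$, closed under single negation and under the $\Box$-prefixes up to the depth determined by $A$, $m$, and $n$, and let the worlds be the maximal $\NA_{m,n}$-consistent subsets of $X$; since $\neg A$ is $\NA_{m,n}$-consistent, at least one world omits $A$. For each $B\in\MF$ I would then define a formula-indexed relation $\prec_B$, following the canonical-model construction for $\N$ but restricted to $X$ so as to keep $W$ finite, and prove a Truth Lemma $x\Vdash C\iff C\in x$ for all $C\in X$ by induction on $C$.

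The main obstacle will be defining the relations $\prec_B$ so that the Truth Lemma and the $(m,n)$-accessibility of the resulting frame hold simultaneously. On one hand, $\prec_B$ must let membership of $\Box B$ control passage, so that $\Box B\in x$ is equivalent to ``all $\prec_B$-successors satisfy $B$''; on the other hand, the closure condition $x\prec_B^m y\Rightarrow x\prec_B^n y$ must be forced into the construction. Here the axiom $\mathrm{Acc}_{m,n}$ supplies exactly the membership implications needed to convert an $m$-step $B$-path into an $n$-step one, while in the case $m=0$, $n\geq2$ the rule $\mathrm{Ros}^\Box$ is what guarantees that the witnessing worlds required for the longer paths are available and consistent. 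Once the Truth Lemma and $(m,n)$-accessibility are established, the world omitting $A$ falsifies $A$ on a finite $(m,n)$-accessible $\N$-frame, which gives $(3)\Rightarrow(1)$ and closes the cycle.
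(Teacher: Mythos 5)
You should note at the outset that the paper does not prove this statement at all: it is imported verbatim as a Fact from Kurahashi and Sato \cite[Theorem 5.1]{KS}, so there is no in-paper proof to match and your reconstruction has to stand on its own. Its skeleton is fine: the auxiliary lemma $x\Vdash\Box^k B \iff \forall y\,(x\prec_B^k y \Rightarrow y\Vdash B)$ is correct and yields the validity of $\mathrm{Acc}_{m,n}$ on $(m,n)$-accessible frames exactly as you say, and $(2)\Rightarrow(3)$ is trivial. But the two hard steps are left with genuine gaps. For the soundness of $\mathrm{Ros}^\Box$ in the only case where it is not admissible ($m=0$, $n\geq 2$), observe that this rule is not validity-preserving model-by-model: a world can force $\Box\Box A$ vacuously, having no $\prec_{\Box A}$-successors at all, so the induction on derivations needs the class-level transfer ``if $\Box\Box A$ is satisfiable on some $(0,n)$-accessible frame, then so is $\Box A$.'' Your proposed chaining of instances of $x\prec_B^n x$ delivers this only when $n=2$: there the cycle for $B=A$ reads $x\prec_{\Box A}u\prec_A x$, so $u$ is a $\prec_{\Box A}$-successor of $x$ and hence forces $\Box A$. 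For $n\geq 3$ the cycle $x\prec_A^n x$ begins with an edge labeled $\Box^{n-1}A$, and its unique edge labeled $\Box A$ leaves the world $u_{n-2}$, about which nothing is known; the ``witnessing successors'' you invoke are thus not available at the world forcing $\Box\Box A$, and the argument as sketched does not go through.

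In $(3)\Rightarrow(1)$ you explicitly defer the crux: ``the main obstacle will be defining the relations $\prec_B$ so that the Truth Lemma and the $(m,n)$-accessibility of the resulting frame hold simultaneously.'' But that definition, together with its verification, \emph{is} the content of \cite[Theorem 5.1]{KS}; naming the obstacle is not solving it. Two concrete difficulties your outline does not address: first, $(m,n)$-accessibility quantifies over every $B\in\MF$, not merely over the members of your finite adequate set $X$, so you must also specify $\prec_B$ for the infinitely many irrelevant labels (harmless when $m\geq 1$, where empty relations satisfy the condition vacuously, but when $m=0$ every world must carry an $n$-cycle with labels $\Box^{n-1}B,\ldots,\Box B, B$ for \emph{every} $B$, and these cycles interact with the relations that drive the Truth Lemma); second, when $n>m$ the accessibility condition converts $m$-step paths into strictly longer $n$-step paths, so your closure of $X$ ``under the $\Box$-prefixes up to the depth determined by $A$, $m$, and $n$'' is not obviously well-defined, finite, or sufficient to supply the intermediate worlds those longer paths require. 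Until the $\mathrm{Ros}^\Box$ case for $n\geq 3$ and the actual construction of the $\prec_B$ are carried out, what you have is a plausible plan for reproving the Kurahashi--Sato theorem, not a proof.
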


\begin{fact}[The decidability of $\mathbf{N}^+ \mathbf{A}_{m,n}$ {\cite[Corollary 6.10]{KS}}] \label{fact2-8}
For each $m, n \in \omega$, the logic $\mathbf{N}^+ \mathbf{A}_{m,n}$ is decidable. In particular, if $m \geq 1$ or $n \leq 1$, the logic $\NA_{m,n}$ is decidable.
\end{fact}
From the proof of {\cite[Theorem 5.1]{KS}}, it is obvious that we can primitive recursively construct a finite $(m,n)$-accessible $\N$-model which falsifies a formula $A$ such that $\mathbf{N}^+ \mathbf{A}_{m,n} \nvdash A$.
%We can see that for each $m,n \in \omega$, $\NA_{m,n}$ is primitive recursively determined from the proof of {\cite[Theorem 5.1]{KS}}.

\section{Main results}\label{sec3}
In this section, we present our results and describe the organization of the following sections.
Our results are stated as follows:
%In this paper, we mainly prove arithmetical completeness theorems of $\NA_{m,n}$.
%In this paper, we prove the following two statements holds:
\begin{thm}\label{main}
Suppose $m \neq n$.
\leavevmode

\begin{itemize}
\item 
For each $m, n \geq 1$, there exists a $\Sigma_1$ provability predicate $\PR_T(x)$ such that $\NA_{m,n} = \PL(\PR_T)$.
\item 
Suppose $T$ is $\Sigma_1$-ill. For each $m \geq 1$, there exists a $\Sigma_1$ provability predicate $\PR_T(x)$ such that  $\NA_{m,0} = \PL(\PR_T)$.
\item 
For each $n \geq 1$, 
there exists no provability predicate $\PR_T(x)$ of $T$ such that  
$\NA_{0,n}  \subseteq \PL(\PR_T)$.
\item 
Suppose $T$ is $\Sigma_1$-sound. For each $m \geq 1$, 
there exists no $\Sigma_1$ provability predicate $\PR_T(x)$ of $T$ such that  
$\NA_{m,0}  \subseteq \PL(\PR_T)$.

\end{itemize}
\end{thm}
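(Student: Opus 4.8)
The statement bundles two positive claims (exhibiting a $\Sigma_1$ provability predicate whose logic is exactly a given $\NA_{m,n}$) and two negative claims (no provability predicate can even contain the logic). I would dispatch the negatives first, since each reduces to a single self-referential sentence. Throughout, write $\Box_k\varphi$ for the $k$-fold nesting $\PR_T(\gn{\PR_T(\gn{\cdots \PR_T(\gn{\varphi})\cdots})})$, so that $f(\Box^k A)$ equals $\Box_k f(A)$ under any arithmetical interpretation $f$. For the third item, fix $n\geq 1$ and suppose toward a contradiction that $\NA_{0,n}\subseteq\PL(\PR_T)$ for some provability predicate $\PR_T(x)$. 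Since $\Box^n A\to A$ is a theorem of $\NA_{0,n}$, every $f$ gives $T\vdash\Box_n f(A)\to f(A)$. Take a diagonal sentence $\theta$ with $\PA\vdash\theta\leftrightarrow\neg\Box_n\theta$ and interpret the variable by $\theta$: the reflection instance yields $T\vdash\Box_n\theta\to\neg\Box_n\theta$, hence $T\vdash\neg\Box_n\theta$ and $T\vdash\theta$. Now weak representability lets me \emph{climb up}: $T\vdash\theta$ gives $\PA\vdash\PR_T(\gn{\theta})$ hence $T\vdash\PR_T(\gn{\theta})$, and iterating this $n$ times gives $T\vdash\Box_n\theta$, contradicting the consistency of $T$. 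This uses neither $\Sigma_1$-ness of $\PR_T$ nor any soundness hypothesis, matching the table.

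\textbf{The fourth item.} I would argue dually, now with the converse scheme $A\to\Box^m A$ of $\NA_{m,0}$ and the hypothesis that $T$ is $\Sigma_1$-sound. Assuming $\NA_{m,0}\subseteq\PL(\PR_T)$ for a $\Sigma_1$ provability predicate $\PR_T$, take $h$ with $\PA\vdash h\leftrightarrow\neg\Box_m h$. The axiom instance gives $T\vdash h\to\Box_m h$, whence $T\vdash\Box_m h$ and $T\vdash\neg h$. This time I \emph{climb down}: for a $\Sigma_1$ provability predicate of a $\Sigma_1$-sound $T$ one has $T\vdash\PR_T(\gn{\psi})$ if and only if $T\vdash\psi$ (combine weak representability with the standard-model characterization noted after the definition of provability predicate). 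Since each $\Box_k h$ is $\Sigma_1$, the provability $T\vdash\Box_m h$ propagates downward to $T\vdash\Box_0 h=h$, contradicting $T\vdash\neg h$. The use of $\Sigma_1$-soundness is essential here, which is exactly why the second item can succeed once $T$ is $\Sigma_1$-ill.

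\textbf{The positive items.} These are arithmetical completeness theorems, and I would prove them by the Solovay method adapted to the non-normal setting used for $\N$ and $\mathbf{N4}$. For each pair I must produce one $\Sigma_1$ predicate $\PR_T(x)$ and verify both inclusions. Since $m\geq 1$ in both positive claims, $\mathrm{Ros}^\Box$ is admissible and $\NA_{m,n}=\N\mathbf{A}_{m,n}$; hence for soundness $\NA_{m,n}\subseteq\PL(\PR_T)$ it suffices to check that $\PR_T$ is a provability predicate (giving all of $\N$ via necessitation) and that $T\vdash\Box_n\varphi\to\Box_m\varphi$ for every sentence $\varphi$, the latter being exactly the property the defining fixed point of $\PR_T$ should build in. For completeness $\PL(\PR_T)\subseteq\NA_{m,n}$, given $\NA_{m,n}\nvdash A$ I invoke Fact~\ref{fact2-7} and the remark after Fact~\ref{fact2-8} to obtain, primitive recursively, a finite $(m,n)$-accessible $\N$-model $\mathcal{M}$ with a world refuting $A$. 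I then run a Solovay-style embedding: define a recursive approximation that moves through the worlds of $\mathcal{M}$, let $\PR_T$ be a fixed point referring to this approximation so that $\PR_T(\gn{f(B)})$ simulates the forcing clause $x\Vdash\Box B$, and choose $f$ so that $T\nvdash f(A)$. Because the relations $\prec_B$ are indexed by formulas, the approximation need only track the finitely many $\prec_B$ with $B$ drawn from $\Sub(A)$ and their boxed variants.

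\textbf{Subcases and the main obstacle.} The three regimes behind the first item need different engineering of the iterated behaviour of $\PR_T$, and together they cover every $m\neq n$ with $m,n\geq 1$ regardless of whether $T$ is $\Sigma_1$-sound or $\Sigma_1$-ill. When $m>n$ the implication $\Box_n\varphi\to\Box_m\varphi$ runs in the monotone, $\D{3}$-like direction and can be arranged for an arbitrary $T$. When $n>m$ it runs against monotonicity, and I expect to need either $\Sigma_1$-soundness (to pin down the standard-model behaviour of the nested predicate) or, in its absence, a fixed false-but-provable $\Sigma_1$ sentence supplied by $\Sigma_1$-illness to force the extra provability by hand; this dichotomy is what the two $n>m$ cases encode, and it dovetails with the negative fourth item. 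The second positive claim, $\NA_{m,0}=\PL(\PR_T)$ for $\Sigma_1$-ill $T$, is the analogue at $n=0$, where $A\to\Box^m A$ is validated precisely by exploiting such a false provable $\Sigma_1$ sentence. The hard part throughout will be the simultaneous control demanded of $\PR_T$: its fixed-point definition must validate the iteration scheme $\Box^n A\to\Box^m A$ yet refute distribution and L\"ob so as to stay inside $\N$, while remaining $\Sigma_1$ and compatible with the Solovay embedding of the formula-indexed accessibility of $(m,n)$-accessible frames. Checking that the nested interpretation $\Box_k\varphi$ behaves correctly under this bespoke $\PR_T$, rather than under the canonical $\Prov_T$, is where the delicate verification will concentrate.
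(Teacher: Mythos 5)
Your two negative items are correct and coincide with the paper's own arguments (Propositions \ref{not arith1} and \ref{not arith2}): the same diagonal sentences, the same climb-up through weak representability for $\NA_{0,n}$, and for $\NA_{m,0}$ the same interplay of $\Sigma_1$-soundness with the standard-model characterization of $\Sigma_1$ provability predicates; your observation that $T \vdash \PR_T(\gn{\psi})$ iff $T \vdash \psi$ is a clean repackaging of the paper's descent from $\mathbb{N} \models \PR_T^m(\gn{\sigma})$. Your case architecture for the positive items --- $m>n\geq 1$ uniformly in $T$, $n>m\geq 1$ split by $\Sigma_1$-soundness, and $n=0$ exploiting $\Sigma_1$-illness --- also matches the paper's Theorems \ref{thm4-4}, \ref{thm4-5}, \ref{thm4-6}, \ref{thm4-7}, as does the reduction of soundness to necessitation plus $T \vdash \PR_T^n(\gn{\varphi}) \to \PR_T^m(\gn{\varphi})$, with $\mathrm{Ros}^\Box$ handled by admissibility.

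For the positive items, however, your proposal is a plan with two concrete gaps. First, \emph{uniformity}: the theorem demands a single $\PR_T(x)$ with $\PL(\PR_T)=\NA_{m,n}$, yet your completeness paragraph builds the fixed-point predicate from the countermodel of one given $A$ (tracking only $\prec_B$ for $B \in \Sub(A)$); as stated this yields, for each unprovable $A$, \emph{some} predicate refuting it --- i.e.\ $\NA_{m,n}=\bigcap \PL(\PR_T)$ over a family --- not equality for one predicate. The paper resolves this by gluing the primitive recursively constructed countermodels of \emph{all} unprovable $A_k$ into one disjoint-union model on $\omega\setminus\{0\}$ and using one function $h$ (or $h'$) able to jump into any world of any component. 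Relatedly, ``an approximation that moves through the worlds of $\mathcal{M}$'' is the wrong picture in this non-normal setting: since $\N$-frames carry formula-indexed relations, the function does not walk along accessibility at all; it sits at $0$ and jumps at most once, to a world $i$ for which $\neg S(\num{i})$ has become a tautological consequence of the theorems enumerated so far, and the predicate is $\exists y(g(y)=x \wedge \Fml(x))$ for a theorem-enumerating $g$ that, after the jump, switches to outputting exactly the translations of formulas boxed at $i$. Second, \emph{the scheme for arbitrary sentences}: soundness needs $T \vdash \PR_T^n(\gn{\varphi}) \to \PR_T^m(\gn{\varphi})$ for \emph{every} $\varphi$, not only those in the range of the designated interpretation, and this is where most of the paper's machinery lives: for $n>m$ with $T$ $\Sigma_1$-sound, the truth-augmented predicate $\Prov_{T}^{\Sigma}(x)$ giving $\Sigma_1$-reflection plus the Case A-3 bookkeeping that lower iterates $\PR_{g_0}^i(\gn{\rho})$ are output before higher ones; for $m>n$, the extra Procedure-2 clause outputting $\PR_{g_1}^{m-1}(\gn{\varphi})$ once $\PR_{g_1}^{n-1}(\gn{\varphi})$ has appeared; and for $\Sigma_1$-ill $T$, a Lindstr\"{o}m-style axiomatization $\sigma(x)$ with $T \vdash \neg\Con_{\sigma}$ together with a modified jump function $h'$ whose triggering conditions involve the auxiliary formulas $\varphi_B$ (needed to push the induction of Claim \ref{cl:6-6} through). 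Your sketch correctly names the destination and the role of a provably false consistency statement, but without these mechanisms the delicate direction ($n>m$, and the scheme for sentences outside the range of $f$) does not go through.
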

Table \ref{tab} in Section \ref{sec1} summarizes the situation in Theorem \ref{main}.
Our strategy to prove the first clause of Theorem \ref{main} depends on whether $T$ is $\Sigma_1$-sound or not.
In fact, we prove the following theorems.
%理論TがΣ_1健全の時とそうでないときの二種類に分けて，NA_m,nの算術的完全性定理を証明する．
%このようにする理由は，TがΣ_1 健全かどうかで，n>m\geq1 のときの証明が変わってくるということ，特にTがΣ_1健全でないときは
%n=0についても算術的完全になるということから，このようにΣ_1 健全かどうかで分ける．
\begin{thm}\label{thm4-2}
	Suppose $m \neq n$.
	If $T$ is $\Sigma_1$-sound and $m, n \geq 1$, then there exists a $\Sigma_1$ provability
	predicate $\PR_T(x)$ of $T$ such that
	\begin{enumerate} 
	\item 
	for any $A \in \mathsf{MF}$ and arithmetical interpretation $f$ based on $\PR_T(x)$,
	if $\mathbf{N}\mathbf{A}_{m,n} \vdash A$, then $T \vdash f(A)$, and 
	\item 
	there exists an arithmetical interpretation $f$ based on $\PR_T(x)$ such that 
	$\mathbf{N}\mathbf{A}_{m,n} \vdash A$ if and only if $T \vdash f(A)$.
	\end{enumerate}
	
	\end{thm}
	\begin{thm}\label{thm4-3}
		Suppose $m \neq n$.
		If $T$ is $\Sigma_1$-ill, $m \geq 1$ and $n \geq 0$, then there exists a $\Sigma_1$ provability
		predicate $\PR_T(x)$ of $T$ such that
		\begin{enumerate} 
		\item 
		for any $A \in \mathsf{MF}$ and arithmetical interpretation $f$ based on $\PR_T(x)$,
		if $\mathbf{N}\mathbf{A}_{m,n} \vdash A$, then $T \vdash f(A)$, and 
		\item 
		there exists an arithmetical interpretation $f$ based on $\PR_T(x)$ such that 
		$\mathbf{N}\mathbf{A}_{m,n} \vdash A$ if and only if $T \vdash f(A)$.
		\end{enumerate}
		
		\end{thm}

Theorems \ref{thm4-2} and \ref{thm4-3} imply the first and second clauses of Theorem \ref{main}.	
In Section \ref{sec4}, we prove Theorem \ref{thm4-2} by distinguishing the two more cases $n > m \geq 1$ and $m > n \geq 1$ and Theorem \ref{thm4-3} with respect to the case $m>n \geq 1$.
In Section \ref{sec5}, we prove  Theorem \ref{thm4-3} with respect to the cases $n>m \geq 1$ and $m > n=0$.

First, we show the third clause of Theorem \ref{main}.
For any $n \in \omega$, provability predicate $\PR_T(x)$, and formula $\varphi$,  we define $\PR_{T}^n(\gn{\varphi})$ inductively as follows:
Let $\PR_{T}^0 (\gn{\varphi})$ be $\varphi$ and $\PR_{T}^{n+1} (\gn{\varphi})$ be $\PR_{T}(\gn{\PR_{T}^n(\gn{\varphi})})$.
The following proposition is a generalization of the fact $\PL(\PR_T) \nsupseteq \mathbf{KT} \ (= \mathbf{K} + \Box A \to A)$ for any $\PR_T(x)$ (cf.~\cite{MON}).
%modal logics containing $\Box^n p \to p$, where $n \geq 1$. 
\begin{prop}\label{not arith1}
For each $n \geq 1$, there exists no provability predicate $\PR_T(x)$ of $T$ 
such that $\Box^n p \to p \in \PL (\PR_T)$.
\end{prop}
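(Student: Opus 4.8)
The plan is to unpack the hypothesis arithmetically and then run a Gödel-style diagonal argument against the $n$-fold iterate of $\PR_T(x)$. First I would observe that, since $p$ is a propositional variable, as $f$ ranges over all arithmetical interpretations based on $\PR_T(x)$ the sentence $f(p)$ ranges over all $\LA$-sentences, and $f(\Box^n p \to p)$ is $\PR_T^n(\gn{f(p)}) \to f(p)$. Hence $\Box^n p \to p \in \PL(\PR_T)$ is equivalent to the reflection-type principle that $T \vdash \PR_T^n(\gn{\sigma}) \to \sigma$ for every $\LA$-sentence $\sigma$, and I will assume this for contradiction.

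The key auxiliary fact I would isolate is an iterated version of the defining property of a provability predicate: for every $k \geq 1$ and every sentence $\varphi$, $T \vdash \varphi$ implies $\PA \vdash \PR_T^k(\gn{\varphi})$. This is proved by induction on $k$, using only that $T \vdash \psi$ iff $\PA \vdash \PR_T(\gn{\psi})$ together with $\PA \subseteq T$: the base case is the definition, and for the step one notes the syntactic identity $\PR_T^{k+1}(\gn{\varphi}) = \PR_T^k(\gn{\PR_T(\gn{\varphi})})$ (both sides being the $(k+1)$-fold composition of $\psi \mapsto \PR_T(\gn{\psi})$ applied to $\varphi$), applies the base case to obtain $T \vdash \PR_T(\gn{\varphi})$, and then invokes the induction hypothesis on the sentence $\PR_T(\gn{\varphi})$. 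It is worth stressing that this uses neither $\Sigma_1$-ness nor the conditions $\D{2}$ or $\D{3}$, which is essential since the statement quantifies over all provability predicates.

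Next I would manufacture a self-referential sentence for the iterated predicate. Because the map on Gödel numbers sending $\gn{\sigma}$ to $\gn{\PR_T(\gn{\sigma})}$ is primitive recursive, its $(n-1)$-fold iterate is represented by a primitive recursive term, and composing $\PR_T$ with this term yields a formula $\theta_n(x)$ with $\PA \vdash \theta_n(\gn{\sigma}) \leftrightarrow \PR_T^n(\gn{\sigma})$ for every $\sigma$; the point is that only the outermost occurrence of $\PR_T$ is applied, so no truth predicate is needed. Applying the diagonal lemma to $\neg \theta_n(x)$ then produces a sentence $\gamma$ with $\PA \vdash \gamma \leftrightarrow \neg \PR_T^n(\gn{\gamma})$.

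Finally I would close the loop. Instantiating the reflection principle at $\sigma = \gamma$ gives $T \vdash \PR_T^n(\gn{\gamma}) \to \gamma$, while the fixed point gives $T \vdash \gamma \to \neg \PR_T^n(\gn{\gamma})$; chaining these yields $T \vdash \neg \PR_T^n(\gn{\gamma})$ and hence, again by the fixed point, $T \vdash \gamma$. But then the iterated-provability fact gives $\PA \vdash \PR_T^n(\gn{\gamma})$, so $T \vdash \PR_T^n(\gn{\gamma})$, contradicting the consistency of $T$. The restriction $n \geq 1$ enters exactly at the iterated-provability step (for $n = 0$ the formula is a tautology, so the statement would be false). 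I expect the only delicate point to be the bookkeeping in the two preceding steps — verifying the composition identity for $\PR_T^k$ and checking that $\theta_n$ genuinely captures the iterate $\PR_T^n$ — rather than the logical skeleton of the diagonalization, which is the familiar second-incompleteness pattern applied to $\PR_T^n$ in place of $\PR_T$.
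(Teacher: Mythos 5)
Your proof is correct and follows essentially the same route as the paper: diagonalize to obtain $\gamma$ with $\PA \vdash \gamma \leftrightarrow \neg \PR_T^n(\gn{\gamma})$, instantiate an interpretation with $f(p) = \gamma$ to get $T \vdash \PR_T^n(\gn{\gamma}) \to \gamma$, conclude $T \vdash \gamma$, and then contradict the consistency of $T$ via iterated weak representability. The only difference is one of explicitness: you spell out two steps the paper leaves implicit, namely the induction showing that $T \vdash \varphi$ implies $\PA \vdash \PR_T^k(\gn{\varphi})$ for every $k \geq 1$, and the construction of the one-variable formula $\theta_n(x)$ needed to apply the fixed-point lemma to the $n$-fold iterate.
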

\begin{proof}
Suppose that there is a provability predicate $\PR_T(x)$ of $T$ satisfying
$\Box^n p \to p \in \PL (\PR_T)$. 
By the fixed-point lemma (see \cite{Lin}), we have a sentence $\sigma$ such that $\PA \vdash \neg 
\PR_{T}^n (\gn{\sigma}) \leftrightarrow \sigma$. 
Let $f$ be an arithmetical interpretation satisfying $f(p) = \sigma$.
 Then, it follows that
$T \vdash f(\Box^n p) \to f(p)$, that is, we obtain $T \vdash \PR_{T}^n (\gn{\sigma}) \to \sigma$.
By the law of excluded middle, $T \vdash \sigma$ holds.
By $n$-times applications of $\D{1}$, we obtain $T \vdash \PR_T^n(\gn{\sigma})$.
By the definition of $\sigma$, we have $T \vdash \neg \PR_{T}^n(\gn{\sigma})$, which is a contradiction.
\end{proof}

Proposition \ref{not arith1} implies the third clause of Theorem \ref{main}.
%Next, we verify that $\NA_{m,0}$ is not arithmetically sound with respect to $\Sigma_1$-sound theories.
Lastly, we prove the fourth clause of Theorem \ref{main}.
\begin{prop}\label{not arith2}
Let $T$ be $\Sigma_1$-sound. For each $m \geq 1$, there exists no $\Sigma_1$ provability predicate $\PR_T(x)$ of $T$ 
such that $p \to \Box^m p \in \PL (\PR_T)$.
\end{prop}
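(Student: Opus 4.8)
The plan is to mirror the strategy of Proposition \ref{not arith1}, using a diagonalization that forces a $\Sigma_1$-sound theory into asserting a false $\Sigma_1$ statement. The axiom scheme $p \to \Box^m p$ interpreted arithmetically gives $T \vdash \varphi \to \PR_T^m(\gn{\varphi})$ for every sentence $\varphi$. The natural move is to apply this to a sentence $\sigma$ that denies its own $m$-fold provability, so that accepting $\sigma$ leads to a contradiction with $\Sigma_1$-soundness.

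First I would assume, for contradiction, that there is a $\Sigma_1$ provability predicate $\PR_T(x)$ of $T$ with $p \to \Box^m p \in \PL(\PR_T)$. By the fixed-point lemma, I would fix a sentence $\sigma$ such that $\PA \vdash \sigma \leftrightarrow \neg \PR_T^m(\gn{\sigma})$, and take an arithmetical interpretation $f$ with $f(p) = \sigma$. From membership in the provability logic, $T \vdash f(p \to \Box^m p)$, i.e.\ $T \vdash \sigma \to \PR_T^m(\gn{\sigma})$. Combined with the fixed-point equivalence $T \vdash \sigma \to \neg \PR_T^m(\gn{\sigma})$, this yields $T \vdash \neg \sigma$, and hence $T \vdash \PR_T^m(\gn{\sigma})$ by the fixed-point equivalence again (since $\PA \vdash \neg\sigma \to \PR_T^m(\gn{\sigma})$).

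The point where this argument must diverge from Proposition \ref{not arith1} — and where the $\Sigma_1$-soundness hypothesis becomes essential — is in deriving a contradiction from $T \vdash \PR_T^m(\gn{\sigma})$ together with $T \vdash \neg\sigma$. In the earlier proposition the iterated predicate $\PR_T^n$ appeared positively inside the provability logic axiom and one could directly read off both $T \vdash \PR_T^n(\gn{\sigma})$ and $T \vdash \neg\PR_T^n(\gn{\sigma})$. Here I only have $T \vdash \PR_T^m(\gn{\sigma})$ on one side. The plan is to observe that because $\PR_T(x)$ is $\Sigma_1$, each iterate $\PR_T^m(\gn{\sigma})$ is a $\Sigma_1$ sentence, so $T \vdash \PR_T^m(\gn{\sigma})$ forces $\mathbb{N} \models \PR_T^m(\gn{\sigma})$ by $\Sigma_1$-soundness. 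Peeling off the outermost provability predicate via the equivalence characterizing $\Sigma_1$ provability predicates ($T \vdash \psi$ iff $\mathbb{N} \models \PR_T(\gn{\psi})$), one unwinds the $m$ iterations to conclude $T \vdash \sigma$, contradicting $T \vdash \neg\sigma$ and the consistency of $T$.

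The main obstacle I anticipate is making the unwinding of the $m$ nested $\Sigma_1$ provability predicates precise, i.e.\ justifying the step from $\mathbb{N} \models \PR_T^m(\gn{\sigma})$ to $T \vdash \sigma$. This requires an induction on the number of iterations, at each stage using that $\mathbb{N} \models \PR_T(\gn{\psi})$ is equivalent to $T \vdash \psi$ for the $\Sigma_1$ predicate, and keeping track that each inner formula $\PR_T^{k}(\gn{\sigma})$ is itself a genuine $\LA$-sentence to which the characterization applies. Once that bookkeeping is in place, the contradiction with consistency (via $T \vdash \neg\sigma$) closes the argument, and the hypothesis that $m \geq 1$ guarantees at least one layer to peel, so the fixed point is nontrivial.
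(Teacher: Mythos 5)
Your proof is correct and follows essentially the same strategy as the paper: a diagonal sentence combined with the equivalence $T \vdash \varphi \Leftrightarrow \mathbb{N} \models \PR_T(\gn{\varphi})$ for $\Sigma_1$ provability predicates, using $\Sigma_1$-soundness to peel off the $m$ nested predicates one layer at a time until $T \vdash \sigma$ emerges. The only (harmless) difference is the choice of fixed point: you diagonalize against the $m$-fold iterate, $\PA \vdash \sigma \leftrightarrow \neg \PR_T^m(\gn{\sigma})$, which yields $T \vdash \neg\sigma$ outright, whereas the paper takes the ordinary G\"{o}del sentence $\PA \vdash \sigma \leftrightarrow \neg\PR_T(\gn{\sigma})$, derives only $T \nvdash \sigma$ (hence $\mathbb{N} \not\models \PR_T(\gn{\sigma})$, and then $\mathbb{N} \models \PR_T^m(\gn{\sigma})$ from the $\Sigma_1$-soundness of $T$ applied to the $\Sigma_1$ sentence $\neg\PR_T(\gn{\sigma}) \to \PR_T^m(\gn{\sigma})$), with both arguments closing via the identical unwinding step.
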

\begin{proof}
Suppose that there is a $\Sigma_1$ provability predicate $\PR_T(x)$ of $T$ such that $p \to \Box^m p \in \PL(\PR_T)$.
By the fixed-point lemma, we obtain a $\Pi_1$ sentence $\sigma$ such that 
 $\PA \vdash \sigma \leftrightarrow \neg \PR_T(\gn{\sigma})$, so $\sigma$ is a G\"{o}del sentence of $T$.
Let $f$ be an arithmetical interpretation such that $f(p) = \sigma$. 
We obtain $T \vdash \sigma \to \PR_{T}^m (\gn{\sigma})$,  
and $T \vdash \neg \PR_T(\gn{\sigma}) \to \PR_{T}^m (\gn{\sigma})$.
Since $T$ is $\Sigma_1$-sound,
we have $\mathbb{N} \models \neg \PR_T(\gn{\sigma}) \to \PR_{T}^m (\gn{\sigma})$.
We can easily prove that $T \nvdash \sigma$. Thus, we obtain $\mathbb{N} \not \models \PR_T(\gn{\sigma})$, and
$\mathbb{N} \models \PR_{T}^m (\gn{\sigma})$. Therefore,  it follows from the $\Sigma_1$-soundness of $T$ that $T \vdash \sigma$. This contradicts $T \nvdash \sigma$.
\end{proof}
Proposition \ref{not arith2} implies the fourth clause of Theorem \ref{main}.

Also, Proposition \ref{not arith2} is only applicable to $\Sigma_1$ provability predicates,
so we do not know whether $\NA_{m,0}$ becomes a provability logic of some $\PR_T(x)$ which is not $\Sigma_1$ if $T$ is $\Sigma_1$-sound.
We propose the following problem.
\begin{prob}\label{prob3}
Let $T$ be $\Sigma_1$-sound and $m \geq 1$.
Is there a $\Sigma_2$ provability predicate $\PR_T(x)$ of $T$ such that $\NA_{m,0} = \PL(\PR_T)$? 
\end{prob}
% Kurahashi proved that if $\PR_T(x)$ satisfies $\D{2}$ and $p \to \Box^m p \in \PL(\PR_T)$, then $\Box^m \bot \in \PL(\PR_T)$ (cf. \cite[Lemma 3.12]{Kur18}).
% Hence, $\PR_T(x)$ such that  $\NA_{m,0}= \PL(\PR_T)$ does not satisfy $\D{2}$ because $\NA_{m,0} \nvdash \Box^m \bot$.
 
\section{Proof of Theorem \ref{thm4-2}}\label{sec4}
First, we introduce some notions, which are used  throughout the rest of this paper. 
We call an $\LA$-formula  \textit{propositionally atomic} if it is either atomic or of the form $Q x \psi$, where $Q \in \{\forall, \exists  \}$.
For each propositionally atomic formula $\varphi$, we prepare a propositional variable $p_{\varphi}$.
Let $I$ be a primitive recursive injection from $\LA$-formulas into propositional formulas, which is defined as follows:
\begin{itemize}
\item 
$I(\varphi)$ is $p_{\varphi}$ for each propositionally atomic formula $\varphi$,
\item 
$I(\neg \varphi)$ is $\neg I(\varphi)$,
\item 
$I(\varphi \circ \psi)$ is $I(\varphi) \circ I(\psi)$ for $\circ \in \{ \wedge, \vee, \to \}$.
\end{itemize}
Let $X$ be a finite set of formulas.
An $\LA$-formula $\varphi$ is called a \textit{tautological consequence} (\textit{t.c.}) of $X$
if $\bigwedge_{\psi \in X}I(\psi) \to I(\varphi)$ is a tautology.  
Let $X \tc \varphi$ denote that $\varphi$ is a t.c.~of $X$.
Since t.c.~is a relation concerning propositional logic, for example, $X \tc \varphi \land \psi$ is equivalent to
$X \tc \varphi$ and $X \tc \psi$.
On the other hand, principles of predicate logic do not generally hold, 
for example, for a term $t$, $X \tc \forall x\, \varphi(x) \to \varphi(t)$
does not necessarily hold.
For each $n \in \omega$, let $P_{T,n} : = \{ \varphi \mid \mathbb{N} \models \exists y \leq \num{n} \ \Proof_T(\gn{\varphi}, y)  \}$. 
We see that  if 
$P_{T,n} \tc \varphi$, then $\varphi$ is provable in $T$.
We can formalize the above notions in the language of $\PA$.

Next, we prepare a $\PA$-provably recursive function $h$, which is originally introduced in \cite{Kur20}.
The function $h$ is defined by the recursion theorem as follows:
\begin{itemize}
	\item $h(0) = 0$. 
	\item $h(s+1) = \begin{cases} \text{min} \ J_s & \text{if}\ h(s) = 0 \ \text{and} \ J_s \neq \emptyset \\
				
			h(s) & \text{otherwise},
		\end{cases}$
\end{itemize}
where $J_s = \{ j \in \omega \setminus \{0\} \mid P_{T,s} \tc \neg S(\num{j}) \}$ and $S(x)$ is the $\Sigma_1$ formula $\exists y(h(y) = x)$. 
The function $h$ satisfies the property that for each $s$ and $i$, $h(s+1)=i$ implies $i \leq s+1$, which guarantees that the function $h$ is $\PA$-provably recursive (See \cite{Kur20}, p. 603).
The following proposition holds for $h$. 
\begin{prop}[Cf.~{\cite[Lemma 3.2]{Kur20}}]\label{Prop:h}
\leavevmode
\begin{enumerate}
	\item $\PA \vdash \forall x \forall y(0 < x < y \land S(x) \to \neg S(y))$. 
	\item $\PA \vdash \neg \Con_T \leftrightarrow \exists x(S(x) \land x \neq 0)$. 
	\item For each $i \in \omega \setminus \{0\}$, $T \nvdash \neg S(\num{i})$. 
	\item For each $l \in \omega$, $\PA \vdash \forall x \forall y(h(x) = 0 \land h(x+1) = y \land y \neq 0 \to x \geq \num{l})$. 
\end{enumerate}
\end{prop}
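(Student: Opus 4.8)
The plan is to prove the four clauses of Proposition~\ref{Prop:h} by analyzing the recursion defining $h$, exploiting that $h$ is a ``one-shot'' function: once $h$ takes a nonzero value it freezes there forever. I would begin by establishing this monotonicity-type behavior as the backbone for all four parts, namely that $\PA$ proves $\forall s(h(s) \neq 0 \to h(s+1) = h(s))$ and hence $\forall s \forall t(s \leq t \land h(s) \neq 0 \to h(t) = h(s))$, which follows directly from reading off the second clause of the definition. I would also record that the first (nonzero) value $i$ chosen at a step $s+1$ satisfies $\neg S(\num{i})$ being a t.c.~of $P_{T,s}$, and in particular $i$ is the least positive witness; this ``leastness'' is what drives part~(1).

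For part~(1), $\PA \vdash \forall x \forall y(0 < x < y \land S(x) \to \neg S(y))$, I would argue that $S(x)$ means $\exists z(h(z) = x)$, so if both $S(x)$ and $S(y)$ held with $0 < x < y$ then $h$ would take two distinct positive values at two different arguments, contradicting the freezing property just established (once $h$ is positive it never changes). Thus at most one positive value is ever attained, giving the implication. For part~(2), I would unwind $\neg \Con_T \equiv \PR_{\tau}(\gn{0=1})$: if $T$ is inconsistent then every formula, in particular every $\neg S(\num{j})$, eventually becomes a t.c.~of some $P_{T,s}$, so the minimization in the definition succeeds and $h$ attains a positive value, giving $\exists x(S(x) \land x \neq 0)$; conversely, if $h(s+1) = i \neq 0$ then $\neg S(\num{i})$ is a t.c.~of $P_{T,s}$, so $T$ proves $\neg S(\num{i})$, yet by definition $h(s+1) = i$ makes $S(\num{i})$ true, and I would show this forces $T$ to prove $S(\num{i})$ too, collapsing consistency. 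The whole argument must be carried out inside $\PA$ using the formalized t.c.~and proof-predicate machinery set up just before the proposition.

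For part~(3), that $T \nvdash \neg S(\num{i})$ for each positive standard $i$, I would reason by contradiction in the standard model: if $T \vdash \neg S(\num{i})$ then by $\Sigma_1$-completeness this proof appears in some $P_{T,s}$, so the minimization would select a value at step $s+1$, and I would trace through to show $S(\num{i})$ becomes true, making $\neg S(\num{i})$ false while $T$ proves it, contradicting the consistency of $T$ (here only genuine consistency, not $\Sigma_1$-soundness, is needed). Part~(4) is the delayed-activation statement: for each standard $l$, $\PA$ proves that $h$ cannot jump from $0$ to a nonzero value before stage $l$. The key point is that at any fixed finite stage $x < l$, only finitely many proofs have been enumerated into $P_{T,x}$, so only finitely many of the sentences $\neg S(\num{j})$ can be tautological consequences; since $l$ is a fixed numeral, I would formalize that the relevant minimization cannot succeed at such an early stage, pushing the first jump past $\num{l}$.

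The main obstacle I anticipate is part~(4), because it requires formalizing, uniformly in a \emph{standard} parameter $l$, that the search $\min\{j \mid \neg S(\num{j})$ is a t.c.~of $P_{T,x}\}$ returns nothing for all $x < \num{l}$; this is delicate precisely because $S$ is defined self-referentially through $h$ via the recursion theorem, so the t.c.~condition at stage $x$ implicitly mentions $h$ itself. I would handle this by noting that a t.c.~of $P_{T,x}$ depends only on the finitely many theorems with proofs below $x$, and that for small $x$ no positive instance $\neg S(\num{j})$ can be forced, then invoking the bound $h(s+1) = i \Rightarrow i \leq s+1$ together with the self-referential unwinding to argue inside $\PA$ that the activation stage is at least $\num{l}$. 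The remaining parts are comparatively routine once the freezing property and the formalized correspondence between $h$, $S$, and $\Con_T$ are in place.
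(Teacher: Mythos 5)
Your clauses 1 and 2 are correct and follow the intended route (the paper itself defers the proof of Proposition \ref{Prop:h} to Kurahashi's Lemma 3.2, but its Section 5 proof of the exactly analogous Proposition \ref{prop:h'} is the natural benchmark): the freezing property of $h$ gives clause 1, and both directions of clause 2 go as you describe, with formalized $\Sigma_1$-completeness converting the truth of $S(\num{i})$ into provability. In clause 3 there is a repairable slip: the minimization at stage $s+1$ selects the \emph{least} $j$ with $\neg S(\num{j})$ a t.c.~of $P_{T,s}$, which need not be your $i$, so ``$S(\num{i})$ becomes true'' is not what you get; moreover ``$\neg S(\num{i})$ false while $T$ proves it'' does not by itself contradict consistency (that would be soundness). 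You must, as you yourself do in clause 2, add $T \vdash S(\num{j})$ by $\Sigma_1$-completeness for the selected $j$, or more simply observe that $\mathbb{N} \models \exists x (S(x) \wedge x \neq 0)$ together with clause 2 evaluated in $\mathbb{N}$ yields $\mathbb{N} \models \neg \Con_T$, contradicting the genuine consistency of $T$.

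The genuine gap is in clause 4. Your stated key point --- that at a fixed stage $x < l$ only finitely many proofs lie in $P_{T,x}$, so only finitely many sentences $\neg S(\num{j})$ can be tautological consequences --- proves nothing: the minimization succeeds as soon as \emph{one} such sentence is a t.c., and finiteness does not exclude that. What actually blocks an early jump is the real-world consistency of $T$: a t.c.~of $P_{T,s}$ is $T$-provable, so by clause 3 no $\neg S(\num{j})$ with $j \neq 0$ is ever a t.c.~of any $P_{T,s}$ in $\mathbb{N}$, whence $\mathbb{N} \models h(\num{l}) = 0$ for every standard $l$. Since evaluating the provably recursive $h$ at a numeral is a verifiable finite computation (a true $\Sigma_1$ fact), $\PA \vdash h(\num{l}) = 0$, and clause 4 then follows inside $\PA$ from the freezing property: $x < \num{l}$ together with $h(x+1) = y \neq 0$ would propagate to $h(\num{l}) \neq 0$, a contradiction. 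This evaluate-in-$\mathbb{N}$-then-import step is exactly the paper's one-line proof of Proposition \ref{prop:h'}.4, and it also dissolves the self-referentiality worry you raise: once $h$ is fixed by the recursion theorem, computing it at standard arguments is unproblematic, and no internal combinatorial analysis of the t.c.~search can replace the external input --- $\PA$ alone cannot rule out jumps at nonstandard stages (that would amount to proving $\Con_T$), which is precisely why the clause is stated with a standard parameter $l$.
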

We are ready to prove Theorem \ref{thm4-2}.
\subsection{The case $n > m \geq 1$}\label{4.1}
In this subsection, we prove the following theorem, which is Theorem \ref{thm4-2} with respect to the case $n > m \geq 1$.
%the arithmetical completeness theorems of $\NA_{m,n}$ with respect to $n > m \geq 1$.
%We prove the following uniform version of the arithmetical completeness theorem.
\begin{thm}\label{thm4-4}
Suppose that $T$ is $\Sigma_1$-sound and $n > m \geq 1$. Then, there exists a $\Sigma_1$ provability
	predicate $\PR_T(x)$ of $T$ such that
	\begin{enumerate} 
	\item 
	for any $A \in \mathsf{MF}$ and arithmetical interpretation $f$ based on $\PR_T(x)$,
	if $\mathbf{N}\mathbf{A}_{m,n} \vdash A$, then $T \vdash f(A)$, and 
	\item 
	there exists an arithmetical interpretation $f$ based on $\PR_T(x)$ such that 
	$\mathbf{N}\mathbf{A}_{m,n} \vdash A$ if and only if $T \vdash f(A)$.
	\end{enumerate}
\end{thm}
\begin{proof}
Let $T$ be $\Sigma_1$-sound and $n > m \geq 1$.
First, we modify the standard provability predicate $\Prov_T(x)$ to make $\Sigma_1$-reflection principle for it provable in $\PA$. Here, $\Sigma_1$-reflection principle for $\PR_T(x)$ is the set of sentences $\{\PR_T(\gn{\sigma}) \to \sigma \mid \sigma \text{ is } \Sigma_1 \text{ sentence} \}$. 
Let $\Prov_{T}^{\Sigma}(x)$ be the $\Sigma_1$ formula $\Prov_T(x) \wedge \bigr( \Sigma_1(x) \to \True_{\Sigma_1}(x)\bigl)$. Here,
$\Sigma_1(x)$ is a $\Delta_1 (\PA)$ formula and $\True_{\Sigma_1}(x)$ is a $\Sigma_1$ formula, and they are introduced in Section \ref{sec2}.
We show that the $\Sigma_1$ formula $\Prov_{T}^{\Sigma}(x)$ is a $\Sigma_1$ provability predicate of $T$ proving $\Sigma_1$-reflection principle for $\Prov_{T}^{\Sigma}(x)$.

	\begin{cl}\label{Cl4-1}
For any formula $\varphi$, $T \vdash \varphi$ if and only if $\PA \vdash \Prov_T^{\Sigma}(\gn{\varphi})$.
\end{cl}
\begin{proof}
$( \Leftarrow)$: Suppose $\PA \vdash \Prov_{T}^{\Sigma}(\gn{\varphi})$. Then, $\PA \vdash \Prov_T(\gn{\varphi})$, and we get $T \vdash \varphi$.
$(\Rightarrow)$: Assume $T \vdash \varphi$. Then, $\PA \vdash \Prov_T(\gn{\varphi})$ holds. If $\varphi$ is a $\Sigma_1$ sentence, $T \vdash \varphi$ implies $\mathbb{N} \models \varphi$ by the $\Sigma_1$ soundness of $T$, 
and we get $\PA \vdash \varphi$. Then, we obtain $\PA \vdash \True_{\Sigma_1}(\gn{\varphi})$.
If $\varphi$ is not a $\Sigma_1$ sentence, then $\PA \vdash \neg \Sigma_1(\gn{\varphi})$ holds. In either case, it follows that $\PA \vdash \Prov_{T}^{\Sigma}(\gn{\varphi})$.
\end{proof}

\begin{cl}\label{Cl4-2}
For any $\Sigma_1$ sentence $\sigma$, $\PA \vdash \Prov_T^{\Sigma}(\gn{\sigma}) \to \sigma$.
\end{cl}
\begin{proof}
Let $\sigma$ be a $\Sigma_1$ sentence. Since $\PA \vdash \Sigma_1 (\gn{\sigma})$,
we obtain $\PA \vdash \Prov_{T}^{\Sigma}(\gn{\sigma}) \to \True_{\Sigma_1}(\gn{\sigma})$. 
It follows that $\PA \vdash \Prov_{T}^{\Sigma}(\gn{\sigma}) \to \sigma$.
\end{proof}

Let $\langle A_k  \rangle_{k \in \omega}$ denote a primitive recursive enumeration of all $\NA_{m,n}$-unprovable formulas.
For each $k \in \omega$,  
we can primitive recursively construct a finite $(m,n)$-accessible $\N$-model $\bigl( W_k, \{ \prec_{k,B}\}_{B \in \MF}, \Vdash_k   \bigr)$ which falsifies $A_k$ (see Fact \ref{fact2-7} and Fact \ref{fact2-8}).
We may assume that the sets $\{ W_k \}_{k \in \omega}$ are pairwise disjoint subsets of $\omega$ and $\bigcup_{k \in \omega} W_k = \omega \setminus \{0\}$. 
We may also assume that for each $i \in \omega \setminus \{0\}$, we can primitive recursively find a unique $k \in \omega$ satisfying $i \in W_k$.
We define a $(m,n)$-accessible $\N$-model $\mathcal{M}= \bigl( W, \{ \prec_{B}\}_{B \in \MF}, \Vdash \bigr)$ as follows:
\begin{itemize}
\item 
$W$ is $\bigcup_{k \in \omega} W_k = \omega \setminus \{0\}$,
\item 
$x \prec_{B} y$ if and only if $x, y \in W_k$ and $x \prec_{k,B} y$ for some $k \in \omega$,
\item
$x \Vdash p$ if and only if $x \in W_k$ and $x \Vdash_k p$ for some $k \in \omega$.
\end{itemize}
We may assume that each $\bigl( W_k, \{ \prec_{k,B}\}_{B \in \MF}, \Vdash_k   \bigr)$ is $\Delta_1(\PA)$ represented in $\PA$ and we see that 
basic properties of $\mathcal{M}$ are proved in $\PA$.

Next, we define a $\PA$-provably recursive function $g_0$ outputting all $T$-provable formulas step by step.
The definition of $g_0$ consists of Procedure 1 and Procedure 2, and starts with Procedure 1.
In Procedure 1, we define the values $g_0(0), g_0(1), \ldots$ 
 by referring to the values $h(0), h(1), \ldots$ and $T$-proofs based on $\Proof_T(x,y)$. 
At the first time $h(s+1) \neq 0$, the definition of $g_0$ switches to Procedure 2 at Stage $s$.

We define the $\PA$-provably recursive function $g_0$ by using the recursion theorem.  
In the definition of $g_0$, we use
the $\Sigma_1$ formula $\PR_{g_0}(x) \equiv$ $\exists y (g_0(y)=x \wedge \Fml(x))$
and the arithmetical interpretation $f_0$ based on $\PR_{g_0}(x)$ such that 
$f_0(p) \equiv \exists x (S(x) \wedge x \neq 0 \wedge x \Vdash p)$. 
Here, for each $A \in \MF$, $f_0(A)$ is $\PA$-provably recursively computed from $A$. Also, $A$ is $\PA$-provably recursively computed from $f_0(A)$
because $f_0$ is injection.
For a $\Sigma_1$ formula $\exists x \delta(x)$, where $\delta(x)$ is a $\Delta_0$ formula, and a natural number $s$, we say that $s$ is a witness of $\exists x \delta(x)$ if $\delta(\num{s})$ is a true $\Delta_0$ sentence. 
The definition of $g_0$ is as follows:
\medskip

\textbf{Procedure 1.}

Stage s:
\begin{itemize}
\item
If $h(s+1)=0$, then 
\begin{equation*}
	g_0(s)= \begin{cases} \varphi & \text{if}\  s \ \text{is a proof of} \ \varphi \ \text{and} \ \varphi \ \text{is ready at Stage} \ s \\
	0 & \text{otherwise},
			\end{cases}
	\end{equation*}
where a formula $\varphi$ \textit{is ready at Stage} $s$ if and only if one of the following conditions holds:
\paragraph{Case A:}
$\varphi$ is not a $\Sigma_1$ sentence;
\paragraph{Case B:}
$\varphi$ is a $\Sigma_1$ sentence 
which is not of the form $\PR_{g_0}(\gn{\psi})$
and there exists  $l <s$ which is a witness of $\True_{\Sigma_1}(\gn{\varphi})$;

\paragraph{Case C:}
$\varphi \equiv \PR_{g_0}(\gn{\psi})$ for some $\psi$ and there is $l<s$ such that $g_0(l)= \psi$.

%\paragraph{Case A:} 
%$s$ is a $T$-proof of a formula $\varphi$.
%\paragraph{Case A-1:} 
%If $\varphi$ is not a $\Sigma_1$ sentence, then $g_0(s) = \varphi$. 
%\paragraph{Case A-2:} 
%$\varphi$ is a $\Sigma_1$ sentence and $\varphi$ is not of the form $\PR_{g_0}(\gn{\psi})$ for any $\psi$.
%\paragraph{Case A-2-1:} 
%If there exists an $l <s$ such that $l$ is a witness of $\True_{\Sigma_1}(\gn{\varphi})$, then $g_0(s)= \varphi$.

%\paragraph{Case A-2-2:} 
%Otherwise. $g_0(s)=0$.
%\paragraph{Case A-3:} 
%$\varphi$ is a $\Sigma_1$ sentence and $\varphi$ is of the form $\PR_{g_0}(\gn{\psi})$ for some $\psi$.

%We can find a formula $\rho$ and $r \geq 1$ such that $\varphi \equiv \PR_{g_0}^r(\gn{\rho})$,
%where $\rho$ is not of the form of $\PR_{g_0}(\gn{\psi})$ for any $\psi$.
%Here, $r$ denotes the maximum number of outermost consecutive applications of $\PR_{g_0}(x)$ in $\varphi$.

%\paragraph{Case A-3-1:}
%If for each $0 \leq i \leq r-1$, there exists an $l_i < s$ such that $g_0(l_i) = \PR_{g_0}^i(\gn{\rho})$, then $g_0(s)= \varphi$.

%\paragraph{Case A-3-2:}
%Otherwise. Let $g_0(s)= 0$.

%\paragraph{Case B:}
%$s$ is not a $T$-proof of any formula $\varphi$. Let $g_0(s)=0$.

%Then, go to Stage $s+1$.

\item 
If $h(s+1) \neq 0$, then go to Procedure 2.
\end{itemize}

\textbf{Procedure 2.}
Suppose $h(s)=0$ and $h(s+1)=i \neq 0$. Let $k$ be a number such that $i \in W_k$.
Let $\{ \xi_t \}_{t \in \omega}$ denote the primitive 
recursive enumeration of all $\LA$-formulas introduced in Section \ref{sec2}. 
Define 
\begin{equation*}
	g_0(s+t)= \begin{cases} \xi_t & \text{if}\  \xi_t \equiv f_0(B)\  \&\ i \Vdash_k \Box B \ \text{for some}\ \Box B \in \mathsf{Sub}(A_k), \\
			                    
		0 & \text{otherwise}.
			\end{cases}
	\end{equation*}

We have completed the definition of $g_0$.
Here, we comment our construction of $g_0$.
In Procedure 1, we construct $g_0$ so that the behavior of $\PR_{g_0}(x)$ becomes the same as that of $\Prov_{T}^{\Sigma}(x)$.
In Procedure 2, $g_0$ is designed to associate $\Box B$ with $\PR_{g_0}(\gn{f_0(B)})$ by outputting $f_0(B)$ such that $i \Vdash \Box B$.
% Since $\Prov_T^{\Sigma}(x)$ proves $\Sigma_1$-reflection principle, and since countermodels of $\NA_{m,n}$ validate $\Box^n A \to \Box^m A$, these parts of the construction of $g_0$ make it possible for $\PR_{g_0}(x)$ to satisfy $\PA \vdash \PR_{g_0}^n(\gn{\varphi}) \to \PR_{g_0}^m(\gn{\varphi})$.
\medskip

The following claim ensures that $\PR_{g_0}(x)$ becomes a provability predicate of $T$.
\begin{cl}\label{Cl4-3}
For any formula $\varphi$, $\PA + \Con_T \vdash \PR_{g_0}(\gn{\varphi}) \leftrightarrow \Prov_T^\Sigma (\gn{\varphi})$.

\end{cl}
\begin{proof}
	We distinguish the following two cases.
	\paragraph{Case 1:} $\varphi$ is not a $\Sigma_1$ sentence.
	
	Since $\PA \vdash \neg \Sigma_1(\gn{\varphi})$, we obtain 
	\begin{equation}\label{prg}
	\PA \vdash \Prov_{T}^{\Sigma}(\gn{\varphi}) \leftrightarrow \Prov_T(\gn{\varphi}).
	\end{equation}
	We argue in $\PA + \Con_T$: By Proposition \ref{Prop:h}, the definition of $g_0$ never goes to Procedure 2.
By the definition of Procedure 1 of $g_0$, it follows that $\PR_{g_0}(\gn{\varphi})$ if and only if $\Prov_T(\gn{\varphi})$.
Then, we conclude that $\PR_{g_0}(\gn{\varphi})$ if and only if $\Prov_{T}^{\Sigma}(\gn{\varphi})$ by (\ref{prg}).

	\paragraph{Case 2:} $\varphi$ is a $\Sigma_1$ sentence. 

	We argue in $\PA + \Con_T$: By Proposition \ref{Prop:h}, the construction of $g_0$ never switches to Procedure 2. 
	
$(\to)$:
Suppose $\PR_{g_0}(\gn{\varphi})$. Then, $g_0(s) = \varphi$ for some $s$.
By the definition of $g_0$ at Stage $s$ in Procedure 1, $\Prov_T(\gn{\varphi})$ holds.
Since $\varphi$ is $\Sigma_1$,
the formula $\varphi$ is output in Case B and there exists $l < s$ such that $l$ is a witness of $\True_{\Sigma_1}(\gn{\varphi})$.
Then, $\True_{\Sigma_1}(\gn{\varphi})$ holds and we have proved that $\Prov_{T}^{\Sigma}(\gn{\varphi})$ holds.

% or Case C. 
% If $\varphi$ is not of the form $\PR_{g_0}(\gn{\psi})$,
% then $\varphi$ is output in Case B and there exists $l < s$ such that $l$ is a witness of $\True_{\Sigma_1}(\gn{\varphi})$.
% If $\varphi$ is of the form $\PR_{g_0}(\gn{\psi})$, 
% then $\varphi$ is output in Case C and there exists $l<s$ such that $g_0(l) = \psi$, that is, $\PR_{g_0}(\gn{\psi})$ is true.
% In both cases, $\True_{\Sigma_1}(\gn{\varphi})$ holds.
% Hence, we have proved that $\Prov_{T}^{\Sigma}(\gn{\varphi})$ holds.
    
%We consider the following two cases.
% \begin{itemize}
% \item 
% $\varphi$ is output in Case A-2-1:

% There exists an $l < s$ such that $l$ is a witness of $\True_{\Sigma_1}(\gn{\varphi})$.
% Hence, we have proved that $\Prov_{T}^{\Sigma}(\gn{\varphi})$ holds.
% \item    
% $\varphi$ is output in Case A-3-1:
% Let $\rho$ and $r \geq 1$ be such that $\varphi \equiv \PR_{g_0}^r(\gn{\rho})$, where $\rho$ is not of the form of $\PR_{g_0}(\gn{\psi})$ for any $\psi$.
% It follows that for each $i \leq r-1$, there exists an $l_i < s$ such that $g_0(l_i) = \PR_{g_0}^i (\gn{\rho})$.
% In particular, since $g_0(l_{r-1}) = \PR_{g_0}^{r-1}(\gn{\rho})$, it follows that $\PR_{g_0}^r(\gn{\rho})$ holds. Hence, $\True_{\Sigma_1}(\gn{\PR_{g_0}^r(\gn{\rho})})$ holds.
% Therefore,  we obtain $\Prov_{T}^{\Sigma}(\gn{\varphi})$.
% \end{itemize}

$(\leftarrow):$ Suppose $\Prov_{T}^{\Sigma}(\gn{\varphi})$. Then, we obtain $\Prov_T(\gn{\varphi})$ 
and $\True_{\Sigma_1}(\gn{\varphi})$.
It suffices to prove that $\varphi$ is ready at some Stage.
We consider the following two cases.
\begin{itemize}
\item 
$\varphi$ is not of the form $\PR_{g_0}(\gn{\psi})$:

Let $l$ be a witness of $\True_{\Sigma_1}(\gn{\varphi})$.
Since we have $\Prov_T(\gn{\varphi})$, there exists some $s > l$ such that $s$ is a $T$-proof of $\varphi$.
Thus, $\varphi$ is ready at Stage $s$.

\item 
$\varphi$ is of the form $\PR_{g_0}(\gn{\psi})$ for some $\psi$:

Since $\varphi \equiv \PR_{g_0}(\gn{\psi})$ is true, there is $l$ such that $g_0(l)= \psi$.
Since $\Prov_T(\gn{\varphi})$ holds,
there exists some $s>l$ such that
$\varphi$ is ready at Stage $s$.
\qedhere    
\end{itemize}
\end{proof}
By Claim \ref{Cl4-3}, for any $\varphi$, we obtain $\mathbb{N} \models \PR_{g_0}(\gn{\varphi}) \leftrightarrow \Prov_{T}^{\Sigma}(\gn{\varphi})$.
Since $\Prov_{T}^{\Sigma}(x)$ is a $\Sigma_1$ provability predicate of $T$ by Claim \ref{Cl4-1}, so is $\PR_{g_0}(x)$.
\begin{cl}\label{Cl4-4}
For any formula $\varphi$, $\PA \vdash \PR_{g_0}^n (\gn{\varphi}) \to \PR_{g_0}^m(\gn{\varphi})$.
\end{cl}
\begin{proof}
For each $k \geq 1$, we obtain $\PA \vdash \Prov_{T}^{\Sigma}(\gn{\PR_{g_0}^k(\gn{\varphi})}) \to \PR_{g_0}^k(\gn{\varphi})$ by Claim \ref{Cl4-2}.
By Claim \ref{Cl4-3}, $\PA + \Con_T \vdash \PR_{g_0}^{k+1} (\gn{\varphi}) \to \PR_{g_0}^{k}(\gn{\varphi})$.
Since $n > m \geq 1$, it follows that $\PA + \Con_T \vdash \PR_{g_0}^n (\gn{\varphi}) \to \PR_{g_0}^m (\gn{\varphi})$.

Next, we show that $\PA + \neg \Con_T \vdash \PR_{g_0}^n (\gn{\varphi}) \to \PR_{g_0}^m (\gn{\varphi})$.
We argue in $\PA + \neg \Con_T$: 
By Proposition \ref{Prop:h}.2, there exists some $i \neq 0$ satisfying $S(\num{i})$. 
Let $s$ and $k$ be such that $h(s)=0$ and $h(s+1)=i \in W_k$.
Then, the definition of $g_0$ switches to Procedure 2 at Stage $s$.
Suppose that $\PR_{g_0}^n(\gn{\varphi})$ holds, namely, 
$\PR_{g_0}^{n-1} (\gn{\varphi})$ is output by $g_0$. 
We show that $\PR_{g_0}^{m}(\gn{\varphi})$ holds, that is, $\PR_{g_0}^{m-1}(\gn{\varphi})$ is output by $g_0$.
We distinguish the following two cases.
\begin{itemize}
\item $\PR_{g_0}^{n-1}(\gn{\varphi})$ is output in Procedure 1: 

By the induction on $0 \leq i \leq n-1$, we prove $\PR_{g_0}^{n-1-i}(\gn{\varphi})$ is output by $g_0$ in Procedure 1.
If $i=0$, then $\PR_{g_0}^{n-1}(\gn{\varphi})$ is output in Procedure 1. 
Suppose $i+1 \leq n-1$.
By the induction hypothesis, $\PR_{g_0}^{n-1-i}(\gn{\varphi})$ is output by $g_0$ in Procedure 1.
Let $l$ be such that $g_0(l)= \PR_{g_0}^{n-1-i}(\gn{\varphi})$.
Since $n>m \geq 1$, we obtain
 $n-1 \geq 1$.  Since $n-1 \geq 1$ and $n-1 >i$,
$\PR_{g_0}^{n-1-i}(\gn{\varphi})$ is of the form $\PR_{g_0}(\gn{\psi})$. Thus, $\PR_{g_0}^{n-1-i}(\gn{\varphi})$ is output at Case C,
which implies that there is $r<l$ such that  $g_0(r) = \PR_{g_0}^{n-1-(i+1)}(\gn{\varphi})$. Thus, $\PR_{g_0}^{n-1-(i+1)}(\gn{\varphi})$ is output by $g_0$ in Procedure 1.
Therefore, for each $0 \leq i \leq n-1$, $\PR_{g_0}^{n-1-i}(\gn{\varphi})$ is output by $g_0$ in Procedure 1.
In particular, $\PR_{g_0}^{m-1}(\gn{\varphi})$ is output by $g_0$ in Procedure 1.

% Since $n>m \geq 1$, $\PR_{g_0}^{n-1}(\gn{\varphi})$ is of the form $\PR_{g_0}(\gn{\psi})$. The formula $\PR_{g_0}^{n-1}(\gn{\varphi})$ is output in Case B. 
% Let $\rho$ and $r \geq 0$ be such that $\varphi \equiv \PR_{g_0}^r (\gn{\rho})$, where $\rho$ is not of the form of $\PR_{g_0}(\gn{\psi})$.
% By the induction on $0 \leq i \leq n-1$, we prove $\PR_{g_0}^{n-1-i}(\gn{\varphi})$ is output by $g_0$.

% Since $n > m \geq 1$, $\PR_{g_0}^{n-1}(\gn{\varphi})$ is of the form $\PR_{g_0}(\gn{\psi})$ for some $\psi$.

% and is output at Case A-3-1 in Procedure 1.
% Hence, for each $i \leq n-1$, there exists an $l_i$ such that $g_0(l_i) = \PR_{g_0}^i (\gn{\varphi})$.
% Since $0 \leq m-1 < n-1$ holds, we obtain $g_0(l_{m-1})= \PR_{g_0}^{m-1}(\gn{\varphi})$.

\item $\PR_{g_0}^{n-1}(\gn{\varphi})$ is output in Procedure 2: 

Let $\xi_u \equiv \PR_{g_0}^{m-1}(\gn{\varphi})$.
It follows that 
$\PR_{g_0}^{n-1}(\gn{\varphi}) \equiv f_0(B)$ and $i \Vdash_k \Box B$ for some $\Box B \in \Sub(A_k)$.
Then, by $\PR_{g_0}^{n-1}(\gn{\varphi}) \equiv f_0(B)$, we obtain some $C \in \Sub(A_k)$ such that 
$f_0(C) = \varphi$ and $B \equiv \Box^{n-1}C$.
Since $i \Vdash_k \Box B$ and $i \Vdash_k \Box^n C \to \Box^m C$,
we get  $i \Vdash_k \Box^m C$.
It follows from $m \leq n-1$ and $B \in\Sub(A_k)$ that $\Box^m C \in \Sub(A_k)$ holds. 
Also, we have $f_0(\Box^{m-1}C) \equiv \PR_{g_0}^{m-1}(\gn{f_0(C)}) \equiv \xi_u$.
We conclude $g_0(s+u) = \xi_u \equiv \PR_{g_0}^{m-1}(\gn{\varphi})$.
\end{itemize}
Therefore, we obtain $\PA + \neg \Con_T \vdash \PR_{g_0}^n (\gn{\varphi}) \to \PR_{g_0}^m(\gn{\varphi})$. By the law of excluded middle, we conclude that $\PA \vdash \PR_{g_0}^n(\gn{\varphi}) \to \PR_{g_0}^m(\gn{\varphi})$.
\end{proof}

\begin{cl}\label{Cl4-5}
Let $i \in W_k$ and $B \in \Sub(A_k)$.
\begin{enumerate}
	\item
	If $i \Vdash_k B$, then $\PA \vdash S(\num{i}) \to f_{0}(B)$.
	\item 
	If $i \nVdash_k B$, then $\PA \vdash S(\num{i}) \to \neg f_{0}(B)$.

\end{enumerate}
\end{cl}
\begin{proof}
	We prove Clauses 1 and 2 simultaneously by induction on the construction of $B \in \Sub(A_k)$.
	We prove the base case of the induction. When $B$ is $\bot$,  Clauses 1 and 2 trivially hold. Suppose that $B \equiv p$.
	\begin{enumerate}
	\item  
	If $i \Vdash_k p$, then $\PA \vdash S(\num{i}) \to \exists x \bigl(S(x) \wedge x\neq0 \wedge x \Vdash p \bigr)$ holds.
	Hence, we get $\PA \vdash S(\num{i}) \to f_0(p)$.
	\item 
	Suppose $i \nVdash_k p$. By Proposition \ref{Prop:h}.1, $\PA \vdash S(\num{i}) \to \forall x (S(x) \wedge x \neq 0 \to x= \num{i})$.
	Thus, we obtain $\PA \vdash S(\num{i}) \to \forall x (S(x) \wedge x \neq 0 \to x \nVdash p)$ and $\PA \vdash S(\num{i}) \to \neg f_0(p)$ is proved.
	\end{enumerate}
	Next, we prove the induction cases. Since the cases of $\neg, \ \wedge,\ \vee$ and $\to$ can be  easily proved, we only consider the case $B \equiv \Box C$.

	1.	Suppose $i \Vdash_k \Box C$. We argue in $\PA + S(\num{i})$: Let $s$ be such that $h(s)=0$ and $h(s+1)=i$.
		Let $\xi_t \equiv f_0(C)$. Since $\Box C \in \Sub(A_k)$ and $i \Vdash_k \Box C$ hold,
		we obtain $g_0(s+t)= \xi_t$. Thus, $\PR_{g_0}(\gn{f_0(C)})$ holds, that is, it follows that $f_0(\Box C)$.
		
		2. Suppose $i \nVdash_k \Box C$. Then, there exists $j \in W_k$ such that
		$i \prec_{k,C} j$ and $j \nVdash_k C$. By the induction hypothesis, we obtain $\PA \vdash S(\num{j}) \to \neg f_0(C)$.
		Let $p$ be a proof of $S(\num{j}) \to \neg f_0(C)$ in $T$.

    We argue in $\PA + S(\num{i})$: Let $s$ be such that $h(s)=0$ and $h(s+1)=i$.
Suppose, towards a contradiction, that $f_0(C)$ is output by $g_0$.
\begin{enumerate}
\item[(i)]
$f_0(C)$ is output in Procedure 1: 

There exists $l<s$ such that
$l$ is a proof of $f_0(C)$ in $T$. Hence, $f_0(C) \in P_{T, s-1}$ holds.
By Proposition \ref{Prop:h}.4, $p<s$ holds, and we obtain $S(\num{j}) \to \neg f_0(C) \in P_{T, s-1}$.
Thus, it follows that $\neg S(\num{j})$ is a t.c.~of $P_{T,s-1}$. We obtain $h(s) \neq 0$, which is a contradiction.

\item [(ii)]
$f_0(C)$ is output in Procedure 2:

Then, $f_0(C) \equiv f_0(C')\  \text{and}\ i \Vdash_k \Box C' \ \text{for some}\ \Box C' \in \mathsf{Sub}(A_k)$ holds.
Since $f_0(C) \equiv f_0(C')$ and $f_0$ is an injection, we obtain $C \equiv C'$. Hence, $i \Vdash_k \Box C$ holds. This is a contradiction.  

\end{enumerate}

We have shown that 
$\PA + S(\num{i})$ proves that 
$f_0(C)$ is never output by $g_0$.
Therefore, we obtain $\PA+ S(\num{i}) \vdash \neg f_0(\Box C)$.
\end{proof}
We complete our proof of Theorem \ref{thm4-4}. The implication $(\Rightarrow)$ is obvious from Claim \ref{Cl4-4}.
We prove the implication $(\Leftarrow)$. Suppose $\NA_{m,n} \nvdash A$. Then, $A \equiv A_k$ for some $k \in \omega$ and $i \nVdash_k A$ for some $i \in W_k$.
Hence, we obtain $\PA \vdash S(\num{i}) \to \neg f_0(A)$ by Claim \ref{Cl4-5}. Thus, it follows from Proposition \ref{Prop:h}.3 that $T \nvdash f_0(A)$.
\end{proof}
%\begin{cor}Suppose that $T$ is $\Sigma_1$-sound and $n > m \geq 1$.There exists a $\Sigma_1$ provability predicate $\PR_T(x)$ such that
%$\NA_{m,n} = \PL(\PR_T)$.\end{cor}

\subsection{The case $m > n \geq 1$}
In this subsection, we verify Theorem \ref{thm4-5}, which is Theorem \ref{thm4-2} and Theorem \ref{thm4-3} with respect to the case $m>n \geq 1$.
In the proof of Theorem \ref{thm4-5}, we make no assumption as to whether $T$ is $\Sigma_1$-sound or not.
However, 
the proof of Theorem \ref{thm4-5} is not applicable to the case $m > n=0$ when $T$ is $\Sigma_1$-ill, so we prove this case in Section \ref{sec5}.
\begin{thm}\label{thm4-5}
Suppose $m>n\geq 1$. Then, there exists a $\Sigma_1$ provability
predicate $\PR_T(x)$ of $T$ such that
\begin{enumerate} 
\item 
for any $A \in \mathsf{MF}$ and arithmetical interpretation $f$ based on $\PR_T(x)$,
if $\mathbf{N}\mathbf{A}_{m,n} \vdash A$, then $T \vdash f(A)$, and 
\item 
there exists an arithmetical interpretation $f$ based on $\PR_T(x)$ such that 
$\mathbf{N}\mathbf{A}_{m,n} \vdash A$ if and only if $T \vdash f(A)$.
\end{enumerate}
\end{thm}
%Next, we verify the following uniform version of the arithmetical completeness theorem of $\NA_{m,n}$ for $m > n \geq 1$.
%This proof is a generalization of the proof of Fact \ref{fact2-2} (See Theorem 4.1 of \cite{Kur23}).
%\begin{thm}\label{thm:4-4}
%Suppose $T$ is $\Sigma_1$-sound and $m>n\geq 1$. There exists a $\Sigma_1$ provability
%predicate $\PR_T(x)$ of $T$ such that
%\begin{enumerate} 
%\item 
%for any $A \in \mathsf{MF}$ and arithmetical interpretation $f$ based on $\PR_T(x)$,
%if $\mathbf{N}^+\mathbf{A}_{m,n} \vdash A$, then $T \vdash f(A)$, and 
%\item 
%there exists an arithmetical interpretation $f$ based on $\PR_T(x)$ such that 
%$\mathbf{N}^+\mathbf{A}_{m,n} \vdash A$ if and only if $T \vdash f(A)$.
%\end{enumerate}
%\end{thm}

\begin{proof}
Let $m > n \geq 1$.
Let $\langle A_k  \rangle_{k \in \omega}$ be a primitive recursive enumeration of all $\NA_{m,n}$-unprovable formulas.
For each $k \in \omega$, we can primitive recursively construct a finite $(m,n)$-accessible $\N$-model $\bigl( W_k, \{ \prec_{k,B}\}_{B \in \MF}, \Vdash_k   \bigr)$  falsifying $A_k$.
As in Subsection \ref{4.1},
let $\mathcal{M} = \bigl( W, \{ \prec_{B}\}_{B \in \MF}, \Vdash  \bigr)$ be the $(m,n)$-accessible $\N$-model defined as a disjoint union of these finite $(m,n)$-accessible $\N$-models.

By the recursion theorem,
we define a $\PA$-provably recursive function $g_1$ corresponding to the case $m> n\geq 1$.
In the definition of $g_1$, we use
the $\Sigma_1$ formula $\PR_{g_1}(x) \equiv$ $\exists y (g_1(y)=x \wedge \Fml(x))$
and the arithmetical interpretation $f_1$ based on $\PR_{g_1}(x)$ such that 
$f_1(p) \equiv \exists x (S(x) \wedge x \neq 0 \wedge x \Vdash p)$.
\medskip

\textbf{Procedure 1.}

Stage $s$:
\begin{itemize}
\item If $h(s+1) =0$,
\begin{equation*}
  g_1(s)  = \begin{cases}
       \varphi & \text{if}\ s\ \text{is a}\ T\text{-proof of}\ \varphi, \\
               0 & \text{otherwise}.
             \end{cases}
  \end{equation*}

Then, go to Stage $s+1$.

\item If $h(s+1) \neq 0$, go to Procedure 2.
\end{itemize}

\textbf{Procedure 2.}

Suppose $s$ and $i \neq 0$ satisfy $h(s)=0$ and $h(s+1)=i$. 
Let $k$ be such that $i \in W_k$. 
Define

\begin{equation*}
g_1(s+t)= \begin{cases} \xi_t & \text{if}\  \xi_t \equiv f_1(B)\  \&\ i \Vdash_k \Box B \ \text{for some}\ \Box B \in \mathsf{Sub}(A_k) \\
	& \quad \text{or}\ \xi_t \equiv \PR_{g_1}^{m-1}(\gn{\varphi})\
  \&\ g_1(l) = \PR_{g_1}^{n-1}(\gn{\varphi}) \\
  & \quad \quad \quad \text{for some}\ \varphi\ \text{and}\ l<s+t,	 \\		                    
  
  0 & \text{otherwise}.
		\end{cases}
\end{equation*}

We have finished the definition of $g_1$. 
In Procedure 2, to ensure the condition $\PA \vdash \PR_{g_1}^n(\gn{\varphi}) \to \PR_{g_1}^m(\gn{\varphi})$, $g_1$ outputs $\PR_{g_1}^{m-1}(\gn{\varphi})$ if $g_1$ already outputs $\PR_{g_1}^{n-1}(\gn{\varphi})$.

\medskip

\begin{cl} \label{Cl:4-6}
$\PA + \Con_T \vdash \forall x \forall y \Bigl(\Fml(x) \to \bigl(\Proof_T(x, y) \leftrightarrow g_1(y) = x \bigr)\Bigr)$.
\end{cl}
\begin{proof}
We argue in $\PA + \Con_T$: By Proposition \ref{Prop:h}.2, we have $h(s) =0$ for any number $s$, and the definition of $g_1$ continues to be in Procedure 1. Thus, 
 it follows that for any formula $\varphi$ and any number $p$, $g_1 (p) = \varphi$ if and only if $p$ is a $T$-proof of $\varphi$. 
\end{proof}
By Claim \ref{Cl:4-6}, for any formula $\varphi$, we obtain $\mathbb{N} \models \PR_{g_1}(\gn{\varphi}) \leftrightarrow \Prov_T(\gn{\varphi})$, 
and we see that $\PR_{g_1}(x)$ is a $\Sigma_1$ provability predicate of $T$.
\begin{cl} \label{Cl:4-7}
For any $\LA$-formula $\varphi$, 
$\PA \vdash \PR_{g_1}^n (\gn{\varphi}) \to \PR_{g_1}^m(\gn{\varphi})$.
\end{cl}
\begin{proof}
First, we show $\PA + \Con_T \vdash \PR_{g_1}^n (\gn{\varphi}) \to \PR_{g_1}^m (\gn{\varphi})$. 
For any formula $\psi$, $\PR_{g_1}(\gn{\psi})$ is a $\Sigma_1$ sentence.
Then, $\PA \vdash \PR_{g_1}(\gn{\psi}) \to \Prov_T(\gn{\PR_{g_1}(\gn{\psi})})$.
By Claim \ref{Cl:4-6}, $\PA + \Con_T \vdash \Prov_T(\gn{\psi}) \leftrightarrow \PR_{g_1}(\gn{\psi})$. 
Hence, we obtain $\PA + \Con_T \vdash \PR_{g_1}(\gn{\psi}) \to \PR_{g_1}(\gn{\PR_{g_1}(\gn{\psi})})$.
In particular, it follows from $m > n \geq 1$ that $\PA + \Con_T \vdash \PR_{g_1}^n(\gn{\varphi}) \to \PR_{g_1}^m(\gn{\varphi})$.

Next, we show that $\PA + \neg \Con_T \vdash \PR_{g_1}^n(\gn{\varphi}) \to \PR_{g_1}^m(\gn{\varphi})$.
We argue in $\PA + \neg \Con_T + \PR_{g_1}^n(\gn{\varphi})$: By Proposition \ref{Prop:h}, $h(s)=0$ and $h(s+1)=i$ hold 
for some $i \neq 0$ and number $s$, and the definition of $g_1$ switches to Procedure 2 at Stage $s$.
Let $\xi_v \equiv \PR_{g_1}^{m-1}(\gn{\varphi})$ and we prove that $\xi_v$ is output by $g_1$ in Procedure 2.
Since $\PR_{g_1}^n(\gn{\varphi})$ holds, $\PR_{g_1}^{n-1}(\gn{\varphi})$ is output by $g_1$.
We consider the following two cases.
\begin{itemize}
\item 
$\PR_{g_1}^{n-1}(\gn{\varphi})$ is output in Procedure 1: 

Then, $g_1(l) = \PR_{g_1}^{n-1}(\gn{\varphi})$ holds for some $l < s$. 
Since $l < s+v$, $g_1(s+v)$ = $\xi_v$ holds by the definition of $g_2$ in Procedure 2. 
\item 
$\PR_{g_1}^{n-1}(\gn{\varphi})$ is output in Procedure 2: 

Then, $g_1(s+u) = \PR_{g_1}^{n-1}(\gn{\varphi})$, where $\xi_u \equiv \PR_{g_1}^{n-1}(\gn{\varphi})$.
Since $m-1 > n-1$, the G\"{o}del number of $\PR_{g_1}^{m-1}(\gn{\varphi})$ is larger than that of $\PR_{g_1}^{n-1}(\gn{\varphi})$. Hence, $v > u$ holds by the definition of $\{ \xi_t \}_{t \in \omega}$. 
Then, we obtain $g_1(s+v) = \xi_v$.
\end{itemize}
Thus, it follows that $\PA + \neg \Con_T \vdash \PR_{g_1}^n(\gn{\varphi}) \to \PR_{g_1}^m(\gn{\varphi})$. By the law of excluded middle, we conclude that $\PA \vdash \PR_{g_1}^n (\gn{\varphi}) \to \PR_{g_1}^m(\gn{\varphi})$.
\end{proof}

\begin{cl} \label{Cl:4-8}
Let $i \in W_k$ and $B \in \Sub(A_k)$.
\begin{enumerate}
\item
If $i \Vdash_k B$, then $\PA \vdash S(\num{i}) \to f_{1}(B)$.
\item 
If $i \nVdash_k B$, then $\PA \vdash S(\num{i}) \to \neg f_{1}(B)$.
\end{enumerate}
\end{cl}
\begin{proof}
We prove Clauses 1 and 2 simultaneously by induction on the construction of $B \in \Sub(A_k)$.
We only give a proof of the case  $B \equiv \Box C$.
We can prove Clause 1 similarly as in the proof of Claim \ref{Cl4-5}. 
We prove Clause 2.

Suppose $i \nVdash_k \Box C$. Then, there exists  $j \in W_k$ such that
$i \prec_{k,C} j$ and $j \nVdash_k C$. By the induction hypothesis, we obtain $\PA \vdash S(\num{j}) \to \neg f_1(C)$.
Let $p$ be a proof of $S(\num{j}) \to \neg f_1(C)$ in $T$. 
Let   $m' \geq 0$ and a formula 
$D \in \Sub(C)$ be such that $C \equiv \Box ^{m'} D$, 
where $D$ is not of the form $\Box G$ for any $G$. 
Here, if $C$ is a boxed formula,
$m'$ is the maximum number of the outermost consecutive boxes of $C$, and otherwise $m'=0$. 
We show that $\PA + S(\num{i})$ proves that  $f_1(C)$ is never output by $g_1$.
We distinguish the following two cases.
\paragraph{Case 1:} $m' \geq m-1$.

It follows from $m'\geq m-1$ that we can find $E \in \Sub(C)$ such that $C \equiv \Box^{m-1} E$.
Since $i \nVdash_k \Box C$, we obtain $i \nVdash_k \Box^m E$. Thus, $i \nVdash_k \Box^n E$ holds by $i \Vdash_k \Box^n E \to \Box^m E$.
Since $n \leq m-1$, by the induction hypothesis
\begin{equation}\label{p_0}
\PA \vdash S(\num{i}) \to \neg f_1(\Box^n E).
\end{equation}
We argue in $\PA + S(\num{i})$: Let $s$ be such that $h(s)=0$ and $h(s+1)=i$.
Suppose, towards a contradiction, that $f_1(C)$ is output by $g_1$.
We consider the following three cases.
\begin{enumerate}
\item[(i)]$f_1(C)$ is output in Procedure 1: 

For some $l<s$, $l$ is a proof of $f_1(C)$ in $T$ and $f_1(C) \in P_{T, s-1}$ holds.
By Proposition \ref{Prop:h}.4, we obtain $p<s$. Thus, $S(\num{j}) \to \neg f_1(C) \in P_{T, s-1}$ holds, which implies that $\neg S(\num{j})$ is a t.c.~of $P_{T,s-1}$.
Hence, it follws that $h(s) \neq 0$. This is a contradiction.
\item [(ii)]
$f_1(C) \equiv f_1(C')\  \text{and}\ i \Vdash_k \Box C' \ \text{for some}\ \Box C' \in \mathsf{Sub}(A_k)$ holds:

Since $f_1$ is an injection, we obtain $C \equiv C'$. Then, $i \Vdash_k \Box C$. This contradicts $i \nVdash_k \Box C$.  
\item [(iii)]
$f_1(C) \equiv \PR_{g_1}^{m-1}(\gn{\varphi})$ and $g_1(l) = \PR_{g_1}^{n-1}(\gn{\varphi})$
for some $\varphi$ and $l$ : 
Since $C \equiv \Box^{m-1}E$, we obtain $f_1(C) \equiv \PR_{g_1}^{m-1}(\gn{f(E)})$, so $\PR_{g_1}^{m-1}(\gn{\varphi}) \equiv \PR_{g_1}^{m-1}(\gn{f_1(E)})$ holds.
Then, $f_1(E) \equiv \varphi$ and $f_1(\Box^{n-1}E) \equiv \PR_{g_1}^{n-1}(\gn{f_1(E)}) \equiv \PR_{g_1}^{n-1}(\gn{\varphi})$.
Since $g_1(l) = f_1(\Box^{n-1}E)$, it follows that $\PR_{g_1}(\gn{f_1(\Box^{n-1}E)})$ holds. Hence, we obtain $f_1(\Box^n E)$, which contradicts  (\ref{p_0}). 
\end{enumerate}

\paragraph{Case 2:} $m' < m-1$

We argue in $\PA + S(\num{i})$: Let $s$ be such that $h(s)=0$ and $h(s+1)=i$.
Suppose, towards a contradiction, that $f_1(C)$ is output by $g_1$.
If $f_1(C)$ is output at the cases corresponding to (i) and (ii) in the proof of Case 1, then we can obtain a contradiction similarly.
If $f_1(C) \equiv \PR_{g_1}^{m-1}(\gn{\varphi})$ and $g_1(l) = \PR_{g_1}^{n-1}(\gn{\varphi})$ for some $\varphi$ and $l$, then $C$ must be of the form $\Box^{m-1}F$ for some $F$. 
Since $m' < m-1$, this is a contradiction.
\end{proof}
We finish our proof of Theorem \ref{thm4-5}. 
By Claim \ref{Cl:4-7}, we obtain the first clause.
By Proposition \ref{Prop:h}.3 and Claim \ref{Cl:4-8}, the second clause holds.
\end{proof}

%In the proof of Theorem \ref{thm:4-4}, we do not use the assumption that $T$ is $\Sigma_1$-sound at all.Thus, the proof of Theorem \ref{thm:4-4} is applicable for $T$ whether $T$ is $\Sigma_1$-sound or not.Especially, we obtain a proof of Fact \ref{fact2-2}.\begin{cor}There exists a $\Sigma_1$ provability predicate $\PR_T(x)$ such that
%$\mathbf{N4} = \PL(\PR_T)$.\end{cor}

\section{Proof of Theorem \ref{thm4-3}}\label{sec5}
In Theorem \ref{thm4-5}, we have proved Theorem \ref{thm4-3} with respect to the case $m > n \geq 1$.
In this section, 
we prove Theorem \ref{thm4-3} for the remaining cases $n > m \geq 1$ and $m > n=0$.
Before proving Theorem \ref{thm4-3}, we introduce a $\Sigma_1$ provability predicate $\PR_{\sigma}(x)$ and a $\PA$-provably recursive function $h'$ instead of the function $h$ defined in Section \ref{sec4}.
Throughout this section, we assume that $T$ is a $\Sigma_1$-ill theory. 

Since $T$ is $\Sigma_1$-ill, it is well-known that we obtain a $\Delta_1(\PA)$ formula $\sigma(x)$ representing $T$ in $\PA$ such that $\mathbb{N} \models \forall x (\tau(x) \leftrightarrow \sigma(x))$ and  $T \vdash \neg \Con_{\sigma}$ (cf.~\cite[Theorem 8 (b)]{Lin}).
%Here, $\tau(x)$ is the $\Delta_1(\PA)$ formula representing $T$ in $\PA$ introduced in Section \ref{sec2}.
%Remark that 
%Let $\delta(x)$ be a $\Delta_0$ formula such that $T \vdash \exists y \delta(y)$ and $\mathbb{N} \not \models \exists y \delta(y)$.
%By Orey's theorem (cf.~\cite[Theorem 8 (b)]{Lin}), we obtain the $\Delta_1(\PA)$ formula $\sigma(x) \equiv \tau(x) \vee \exists y \leq x \delta(y)$ representing $T$ in $\PA$ such that $T \vdash \neg \Con_{\sigma}$.
%Here, $\tau(x)$ is the $\Delta_1(\PA)$ formula representing $T$ in $\PA$ introduced in Section \ref{sec2}.
Remark that 
%we obtain $\mathbb{N} \models \forall x (\tau(x) \leftrightarrow \sigma(x))$. 
we get $\mathbb{N} \models \forall x \bigl(\PR_T(x) \leftrightarrow \PR_{\sigma}(x)\bigr)$.
In particular, since $T$ is consistent, $\mathbb{N} \models \Con_{\sigma}$ holds.
The set of sentences defined by $\sigma(x)$ in $\mathbb{N}$ is exactly $T$. On the other hand, the behaviors of $\sigma(x)$ and $\tau(x)$ may be different in $\PA$,
so let $T_{\sigma}$ denote the set of sentences defined by $\sigma(x)$ in $\PA$.
For each $n \in \omega$, let $P_{\sigma,n} := \{ \varphi \mid \mathbb{N} \models \exists y \leq \num{n} \ \Prf_{\sigma}(\gn{\varphi}, y)\}$. 
In $\PA$, we can see that  if $\varphi$ is a t.c.~of $P_{\sigma,n}$, then $\varphi$ is provable from $T_{\sigma}$.

In our proofs presented in this section, 
we use a $\PA$-provably recursive function $h'$ instead of the function $h$ introduced  in Section \ref{sec4}.
Let $\bigl\{ \bigl( W_k, \{ \prec_{k,B} \}_{B \in \MF}, \Vdash_k \bigr) \bigr\}_{k \in \omega}$ be a primitive recursive enumeration of some collection of $\N$-models 
such that the sets $\{ W_k \}_{k \in \omega}$ are pairwise disjoint subsets of $\omega$ and $\bigcup_{k \in \omega} W_k = \omega \setminus \{0\}$.
Let $\{ A_k\}_{k \in \omega}$ be a primitive recursive enumeration of $\mathcal{L}_{\Box}$-formulas.
We construct the function $h'$ by using the enumerations $\bigl\{ \bigl( W_k, \{ \prec_{k,B} \}_{B \in \MF}, \Vdash_k \bigr) \bigr\}_{k \in \omega}$ and $\{ A_k\}_{k \in \omega}$. 
The function $h'$ is defined by using the recursion theorem as follows:

\begin{itemize}
	\item $h'(0) = 0$. 
	\item $h'(s+1) = \begin{cases} \text{min} \ J'_s & \text{if}\ h'(s) = 0 \ \text{and} \ J'_s \neq \emptyset \\

		h'(s) & \text{otherwise},
		\end{cases}$
\end{itemize}
where 
\begin{align*}
J'_s = \{ j \in &  \omega\setminus \{0\}  \mid  P_{\sigma,s} \tc \neg S'(\num{j}) \ \text{or}\ \\
& \exists k  \exists B[ j \in W_k \ \&\ B \in \Sub(A_k) \ \& \ P_{\sigma,s} \tc \forall x \varphi_B(x) \wedge \bigl (\varphi_B(\num{j}) \to \neg S'(\num{j}) \bigr)]\}.
\end{align*}
Here, $S'(x)$ and $\varphi_B(x)$ are formulas 
\[
\exists y\bigl( h'(y)=x \bigr) \ \text{and} \ \bigl( x \neq 0 \wedge \exists y (x \in W_y \wedge B \in \Sub(A_y) \wedge x \nVdash_y B  ) \bigr) \to \neg S'(x),
\]
respectively.
%\exists y\bigl( h'(y)=x \bigr) 
%\ \text{and} \ \varphi_B(x) \equiv \bigl( x \neq 0 \wedge \exists y (x \in W_y \wedge B \in \Sub(A_y) \wedge x \nVdash_y B  ) \bigr) \to \neg S'(x)
We have finished the definition of $h'$.
\medskip

$\PA$-provably recursiveness of the function $h'$ is guaranteed by the following claim.
\begin{cl}\label{cl:6-3}
For any $j \neq 0$, if $h'(s)=0$ and $h'(s+1)=j$, then $j \leq s+1$. 

\end{cl}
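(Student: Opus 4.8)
The plan is to adapt the argument Kurahashi \cite{Kur20} uses to establish the analogous bound for $h$ (that $h(s+1)=i$ forces $i\le s+1$), accounting for the extra disjunct in the definition of $h'$. Fix $s$ with $h'(s)=0$ and suppose $h'(s+1)=j\neq 0$, so that $j$ is the least positive member of the candidate set
\[
Y_s=\{\, j'\in\omega\setminus\{0\}\mid C_1(j')\ \text{or}\ C_2(j')\,\},
\]
where $C_1(j')$ asserts that $\neg S'(\num{j'})$ is a t.c.\ of $P_{\sigma,s}$, and $C_2(j')$ is the displayed condition requiring $\forall x\,\varphi_B(x)$ and $\varphi_B(\num{j'})\to\neg S'(\num{j'})$ to be t.c.'s of $P_{\sigma,s}$ for some $k$ and $B\in\Sub(A_k)$ with $j'\in W_k$. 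It suffices to show that \emph{every} element of $Y_s$ is at most $s+1$; then in particular $j=\min Y_s\le s+1$.

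First I would record two consequences of the fixed G\"odel numbering. Since any $\psi\in P_{\sigma,s}$ has a $\sigma$-proof coded by some $y\le s$, and $\psi$ is a proper sub-expression of that proof, we get $\gn{\psi}<y\le s$; hence every formula occurring in $P_{\sigma,s}$, and thus every propositionally atomic subformula of such a formula, has G\"odel number at most $s$. On the other hand, the numerals satisfy $\gn{\num{j'}}\ge j'$, and since $\num{j'}$ is a proper sub-expression of $S'(\num{j'})$ we obtain $\gn{S'(\num{j'})}>j'$. Now I split on whether the propositional formula $\Phi_s:=\bigwedge_{\psi\in P_{\sigma,s}}I(\psi)$ is satisfiable. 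If $\Phi_s$ is unsatisfiable, then $\neg S'(\num{j'})$ is a t.c.\ of $P_{\sigma,s}$ for \emph{every} $j'$, so $1\in Y_s$ and $\min Y_s\le 1\le s+1$.

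It remains to treat the satisfiable case, where I argue that each candidate is already forced to be small. For a $C_1$-candidate $j'$: if the atom $p_{S'(\num{j'})}$ did not occur in $P_{\sigma,s}$, a satisfying assignment of $\Phi_s$ could be extended by setting $p_{S'(\num{j'})}$ true, so $\neg S'(\num{j'})$ would not be a t.c.; hence $p_{S'(\num{j'})}$ occurs in some $\psi\in P_{\sigma,s}$, and the two numbering facts give $j'<\gn{S'(\num{j'})}\le \gn{\psi}\le s$. For a $C_2$-candidate $j'$, I would note that under $I$ the formula $\varphi_B(\num{j'})\to\neg S'(\num{j'})$ reduces to a propositional combination of the atoms $p_{S'(\num{j'})}$, $p_{\num{j'}=0}$, and $p_{\exists y(\num{j'}\in W_y\wedge B\in\Sub(A_y)\wedge \num{j'}\nVdash_y B)}$ (using that $(\alpha\to\neg p)\to\neg p$ is tautologically $\alpha\vee\neg p$); each of these atoms has $\num{j'}$ as a proper sub-expression and so G\"odel number exceeding $j'$. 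As before, for this formula to be a t.c.\ of the satisfiable $\Phi_s$ at least one of these atoms must occur in $P_{\sigma,s}$, which again yields $j'<s$. The additional conjunct ``$\forall x\,\varphi_B(x)$ is a t.c.'' only shrinks $Y_s$ and so is harmless for an upper bound. Combining the cases, $Y_s\subseteq\{1,\dots,s+1\}$, giving $j\le s+1$.

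The main obstacle I anticipate is the $C_2$ clause: one must carry out the propositional reduction of $\varphi_B(\num{j'})\to\neg S'(\num{j'})$ correctly and verify that \emph{all} atoms it contributes genuinely dominate $j'$ in G\"odel number, so that occurrence of any of them in $P_{\sigma,s}$ forces the bound. The $C_1$ clause and the satisfiability dichotomy are routine once the two numbering inequalities are in place.
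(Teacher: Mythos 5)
Your proof is correct and takes essentially the same approach as the paper's: the same dichotomy on propositional satisfiability of $P_{\sigma,s}$ (with minimality handling the unsatisfiable case via $j=1$), the same argument that a non-tautological t.c.\ of a satisfiable set must share an atom with it, and the same two G\"odel-numbering monotonicity facts to bound $j$. The only cosmetic difference is in the second disjunct of the definition of $h'$, where the paper weakens $\varphi_B(\num{j}) \to \neg S'(\num{j})$ to $\num{j}=0 \to \neg S'(\num{j})$ via the tautology $\num{j}=0 \to \varphi_B(\num{j})$ and then case-splits, whereas you compute the full propositional form and run the occurrence argument on all three atoms at once.
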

\begin{proof}
Suppose $h'(s)=0$ and $h'(s+1)=j \in W_k$ for some $k$. If $P_{\sigma, s}$ is not propositionally satisfiable,
then $\neg S'(\num{1})$ is a t.c.~of $P_{\sigma, s}$. Hence, $j=1 \leq s+1$ holds.

Then, we assume that $P_{\sigma, s}$ is propositionally satisfiable. 
We distinguish the following two cases.
\paragraph{Case 1}
$\neg S'(\num{j})$ is a t.c.~of $P_{\sigma, s}$:

Since $S'(\num{j})$ is propositionally atomic, $S'(\num{j})$ is a subformula of a formula contained in $P_{\sigma, s}$. Thus, we obtain $j \leq s$. 
\paragraph{Case 2}
$\varphi_B (\num{j}) \to \neg S'(\num{j})$ is a t.c.~of $P_{\sigma, s}$ for some $B \in \Sub(A_k)$:

Then, we further consider the following two cases.
\begin{itemize}
\item
$\neg S'(\num{j})$ is a t.c.~of $P_{\sigma,s}$:

Since $S'(\num{j})$ is a subformula of a formula contained in $P_{\sigma, s}$, we obtain $j \leq s$.
\item 
$\neg S'(\num{j})$ is not a t.c.~of $P_{\sigma,s}$:

Since $j=0 \to \varphi_B(\num{j})$ is a tautology, $j=0 \to \neg S'(\num{j})$ is a t.c.~of $P_{\sigma,s}$.
If $j=0$ were not a subformula of any formula contained in $P_{\sigma,s}$, then $\neg S'(\num{j})$ would be a t.c.~of $P_{\sigma,s}$
because $j=0$ is distinct from $S'(\num{j})$.
Thus, $j=0$ is a subformula of a formula which is in $P_{\sigma,s}$, and we obtain $j \leq s$.
\end{itemize}
In either case, $j \leq s+1$ holds.
\qedhere
\end{proof}

We obtain the following proposition corresponding to Proposition \ref{Prop:h}.
\begin{prop}\label{prop:h'}
	\leavevmode

\begin{enumerate}
\item 
$\PA \vdash \forall x \forall y \bigl( 0 < x < y \wedge S'(x) \to \neg S'(y) \bigr)$.
\item 
$\PA \vdash \neg \Con_{\sigma} \leftrightarrow \exists x \bigl(S'(x) \wedge x \neq 0 \bigr)$.
\item 
For each $i \in \omega \setminus \{ 0\}$, $T \nvdash \neg S'(\num{i})$.
\item 
For each $l \in \omega$, $\PA \vdash \forall x \forall y \bigl( h'(x) =0 \wedge h'(x+1)=y \wedge y \neq 0 \to x \geq l \bigr)$.
\end{enumerate}

\end{prop}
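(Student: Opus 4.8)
The plan is to reprove Proposition \ref{Prop:h} almost verbatim in the present $\Sigma_1$-ill setting, following Kurahashi's \cite[Lemma 3.2]{Kur20}, with two systematic changes: replace $\tau$, $P_{T,s}$, $\Con_T$ and $S$ throughout by their $\sigma$-counterparts $\sigma$, $P_{\sigma,s}$, $\Con_{\sigma}$ and $S'$, and, since the jump clause of $h'$ now carries the extra disjunct built from $\varphi_B$, enlarge every case distinction to include it. First I would record the structural fact that $h'$ is constant once it leaves $0$: straight from the definition $\PA \vdash \forall s(h'(s) \neq 0 \to h'(s+1) = h'(s))$, and hence, by internal induction, $\PA \vdash \forall s \forall t(s \leq t \wedge h'(s) \neq 0 \to h'(t) = h'(s))$. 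Consequently $h'$ takes at most one nonzero value, and Clause~1 is immediate: if $0 < x < y$ and $S'(x)$, then $x$ is that unique value, so $y$ lies outside the range of $h'$ and $\neg S'(y)$ holds.

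For Clause~2, in the direction $(\Leftarrow)$ I would argue inside $\PA$ by picking the least $s$ with $h'(s)=0$ and $h'(s+1)=i\neq0$. The jump means one of the two disjuncts holds for $i$ at $P_{\sigma,s}$, and in either case $\neg S'(\num{i})$ becomes $\sigma$-provable: directly in the first disjunct, and in the second by instantiating $\forall x\,\varphi_B(x)$ at $i$ and combining with $\varphi_B(\num{i}) \to \neg S'(\num{i})$ by modus ponens, using that every t.c.\ of $P_{\sigma,s}$ is $\sigma$-provable. At the same time $S'(i)$ is true, witnessed by $s+1$, so provable $\Sigma_1$-completeness yields $\PR_{\sigma}(\gn{S'(\dot{i})})$; combining this with $\PR_{\sigma}(\gn{\neg S'(\dot{i})})$ through the formalized $\D{2}$ for $\PR_{\sigma}$ gives $\PR_{\sigma}(\gn{0=1})$, that is $\neg\Con_{\sigma}$. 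For $(\Rightarrow)$ I would use that $\neg\Con_{\sigma}$ formally forces $\sigma$-provability of every sentence, so for a large enough $s$ both $0=1$ and $0\neq1$ lie in $P_{\sigma,s}$; then $P_{\sigma,s}$ is propositionally inconsistent and $\neg S'(\num{1})$ is trivially a t.c.\ of it, so the jump clause fires and $h'$ attains a nonzero value.

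Clause~3 is the only genuinely non-formalized statement, and I would argue it semantically. Since $T$ is consistent and $\mathbb{N}\models\forall x(\tau(x)\leftrightarrow\sigma(x))$, we have $\mathbb{N}\models\Con_{\sigma}$, so Clause~2, being a $\PA$-theorem, gives $\mathbb{N}\models\neg\exists x(S'(x)\wedge x\neq0)$; that is, $h'(s)=0$ for every standard $s$. If $T\vdash\neg S'(\num{i})$ for some standard $i\geq1$, then (again by $\mathbb{N}\models\forall x(\tau\leftrightarrow\sigma)$) $\neg S'(\num{i})$ has a standard $\sigma$-proof, hence $\neg S'(\num{i})\in P_{\sigma,s}$ and so is a t.c.\ of $P_{\sigma,s}$ for some standard $s$; this forces $h'(s+1)\neq0$, contradicting that $h'$ vanishes on the standard numbers. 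Thus $T\nvdash\neg S'(\num{i})$.

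The hard part will be Clause~4, the lower bound on the stage of the jump. Fixing a standard $l$, and using $\PA\vdash x<\num{l}\to\bigvee_{c<l}x=\num{c}$, I would reduce it to checking the finitely many stages $c<l$, namely showing $\PA\vdash h'(\num{c})=0\to h'(\num{c+1})=0$ for each standard $c<l$, i.e.\ that the jump set at $P_{\sigma,c}$ is provably empty. For standard $c$, $\PA$ proves that $P_{\sigma,c}$ is a fixed explicit finite set of formulas of bounded Gödel number, and (since $T$, hence $T_\sigma$ in $\mathbb{N}$, is consistent) that this set is propositionally consistent. The Gödel-numbering convention that a proper subformula has a strictly smaller code then drives the argument: a negated atom $\neg S'(\num{j})$ can be a t.c.\ of a propositionally consistent, explicitly bounded $P_{\sigma,c}$ only if the atom $S'(\num{j})$ already occurs inside some member of $P_{\sigma,c}$, which is ruled out once $c$ is below the relevant codes. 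The genuinely new work relative to \cite{Kur20} is to run the same occurrence analysis for the second disjunct, i.e.\ for $\forall x\,\varphi_B(x)$ and $\varphi_B(\num{j})\to\neg S'(\num{j})$, where $\varphi_B$ refers to the primitive recursively represented data $W_k$, $A_k$ and the relations $\nVdash_k$, and to verify that it goes through uniformly in $B$ and $j$. This uniform bookkeeping, rather than any single estimate, is where I expect the real difficulty to lie.
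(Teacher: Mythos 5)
Your Clauses 1--3 are essentially the paper's own proofs: Clause 1 from persistence of nonzero values, Clause 2 $(\Leftarrow)$ by noting that either disjunct of the jump condition makes $\neg S'(\num{i})$ provable from $T_\sigma$ while formalized $\Sigma_1$-completeness makes $S'(\num{i})$ provable, Clause 2 $(\Rightarrow)$ by exhibiting a stage at which the jump set is nonempty (the paper takes a $T_\sigma$-proof of some $\neg S'(\num{i})$ directly, you derive propositional unsatisfiability of $P_{\sigma,s}$; both work), and Clause 3 semantically from $\mathbb{N} \models \Con_\sigma$ exactly as in the paper.

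On Clause 4, however, you diverge, and the difficulty you flag at the end is self-inflicted. The paper's proof is two lines: since $T$ is consistent and $\sigma$ defines $T$ in $\mathbb{N}$, we have $\mathbb{N} \models \Con_{\sigma}$, so by Clause 2 (evaluated in $\mathbb{N}$) $h'$ vanishes on all standard arguments; since $h'$ is $\PA$-provably recursive, the true computation statement $h'(\num{l}) = 0$ is $\PA$-provable for each $l \in \omega$, and Clause 4 then follows inside $\PA$ from persistence (if $x < \num{l}$ and $h'(x+1) = y \neq 0$, then $h'(\num{l}) = y \neq 0$, a contradiction). You already used precisely this external-truth mechanism in your Clause 3 (``$h'$ vanishes on the standard numbers''), but then, instead of converting that truth into $\PA$-provability by numeralwise evaluation of a terminating computation, you set out to prove \emph{inside} $\PA$ that the jump set at each standard stage $c$ is empty, via a formalized occurrence/subformula analysis of t.c.'s of $P_{\sigma,c}$, uniformly in the (internally possibly nonstandard) parameters $j$, $k$, $B$. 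That route is not wrong in principle --- it amounts to re-running, provably in $\PA$, the bound argument of the paper's Claim 5.1 --- but it duplicates work already encapsulated in the $\PA$-provable recursiveness of $h'$ (which is exactly what Claim 5.1 is for), and you leave its crux, the ``uniform bookkeeping'' for the $\varphi_B$-disjunct, explicitly unresolved. Note also that no uniformity in $j$ is ever needed: to certify $h'(\num{c+1}) = 0$, $\PA$ only has to verify the value of a single true $\Sigma_1$ (indeed $\Delta_1$) computation, since the min-search in the definition of $h'$ is bounded by Claim 5.1. So replace your Clause 4 argument by: $\mathbb{N} \models \Con_{\sigma}$, hence by Clause 2 $\mathbb{N} \models h'(\num{l}) = 0$, hence $\PA \vdash h'(\num{l}) = 0$, hence the claim.
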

\begin{proof}
1. Immediate from the definition of $h'$.
 
2. We work in $\PA$:
$(\leftarrow)$: Let $i \neq 0$ be such that $S'(\num{i})$. Let $k$ and $s$ be such that $i \in W_k$, $h'(s)=0$ and $h'(s+1)=i$.
We distinguish the following two cases.
\begin{itemize}
\item 
$\neg S'(\num{i})$ is a t.c.~of $P_{\sigma, s}$: 

Then, $\neg S'(\num{i})$ is provable in $T_\sigma$.
Since $S'(\num{i})$ is a true $\Sigma_1$ sentence, $S'(\num{i})$ is provable in $T_\sigma$ by the $\Sigma_1$-completeness. It follows that $T_\sigma$ is inconsistent.
\item 
$\forall x \varphi_B(x)$ and $\varphi_B(\num{i}) \to \neg S'(\num{i})$ are  t.c.'s~of $P_{\sigma, s}$ for some $B \in \Sub(A_k)$: 
Then, $\forall x \varphi_B(x)$ and $\varphi_B(\num{i}) \to \neg S'(\num{i})$ are provable in $T_\sigma$. Hence, $\varphi_B(\num{i})$ is provable in $T_\sigma$,
and so is $\neg S'(\num{i})$. 
Since $S'(\num{i})$ is a true $\Sigma_1$ sentence, $S'(\num{i})$ is provable in $T_\sigma$. We conclude that $T_\sigma$ is inconsistent.
\end{itemize}
In all cases, we obtain that $T_\sigma$ is inconsistent.

$(\to)$: If $T_\sigma$ is inconsistent, then $\neg S'(\num{i})$ is $T_\sigma$-provable for some $i \neq 0$. Let $s$ be a proof of $\neg S'(\num{i})$.
Then, $\neg S'(\num{i})$ is a t.c.~of $P_{\sigma, s}$, and we obtain $h'(s+1) \neq 0$.

3. Suppose that there exists an $i \in \omega \setminus \{0\}$ such that $T \vdash \neg S'(\num{i})$. Let $p$ be a proof of $\neg S'(\num{i})$ in $T$.
Then, $\neg S'(\num{i})$ is a t.c.~of $P_{\sigma,p}$ and we obtain $\mathbb{N} \models \exists x \bigl( S'(x) \wedge x \neq 0 \bigr)$. By clause 2, we obtain $\mathbb{N} \models \neg \Con_{\sigma}$. 
This contradicts the consistency of $T$.

4. Since $\mathbb{N} \models \Con_{\sigma}$, for any $l \in \omega$, we obtain $\mathbb{N} \models h'(\num{l}) =0$. Thus, $\PA \vdash h'(\num{l})=0$,
and clause 4 is easily obtained.
\end{proof}
We are ready to prove Theorem \ref{thm4-3}.
\subsection{The case $n > m \geq 1$}
In this subsection, we verify the following theorem, which is Theorem \ref{thm4-3} with respect to the case $n>m \geq 1$.
\begin{thm}\label{thm4-6}
Suppose that $T$ is $\Sigma_1$-ill and $n > m \geq 1$. Then, there exists a $\Sigma_1$ provability
	predicate $\PR_T(x)$ of $T$ such that 
\begin{enumerate} 
\item 
for any $A \in \mathsf{MF}$ and arithmetical interpretation $f$ based on $\PR_T(x)$,
if $\mathbf{N}\mathbf{A}_{m,n} \vdash A$, then $T \vdash f(A)$, and 
\item 
there exists an arithmetical interpretation $f$ based on $\PR_T(x)$ such that 
$\mathbf{N}\mathbf{A}_{m,n} \vdash A$ if and only if $T \vdash f(A)$.
\end{enumerate}
\end{thm}
%predicate $\PR_T(x)$ of $T$ such that
%\begin{enumerate} 
%\item 
%for any $A \in \mathsf{MF}$ and arithmetical interpretation $f$ based on $\PR_T(x)$,
%if $\mathbf{N}^+\mathbf{A}_{m,n} \vdash A$, then $T \vdash f(A)$, and 
%\item 
%there exists an arithmetical interpretation $f$ based on $\PR_T(x)$ such that 
%$\mathbf{N}^+\mathbf{A}_{m,n} \vdash A$ if and only if $T \vdash f(A)$.
%\end{enumerate}
%We obtain the following uniform version of the arithmetical completeness theorem.
%\begin{thm}\label{thm6-1}
%Suppose that $T$ is not $\Sigma_1$-sound and $n>m\geq 1$. There exists a $\Sigma_1$ provability
%predicate $\PR_T(x)$ of $T$ such that
%\begin{enumerate} 
%\item 
%for any $A \in \mathsf{MF}$ and arithmetical interpretation $f$ based on $\PR_T(x)$,
%if $\mathbf{N}^+\mathbf{A}_{m,n} \vdash A$, then $T \vdash f(A)$, and 
%\item 
%there exists an arithmetical interpretation $f$ based on $\PR_T(x)$ such that 
%$\mathbf{N}^+\mathbf{A}_{m,n} \vdash A$ if and only if $T \vdash f(A)$.
%\end{enumerate}
	
%\end{thm}
\begin{proof}
Let $n>m \geq 1$.
Let $\langle A_k  \rangle_{k \in \omega}$ denote a primitive recursive enumeration of all $\NA_{m,n}$-unprovable formulas.
As in the proof of Theorem \ref{thm4-2}, for each $k \in \omega$, we obtain a primitive recursively constructed finite $(m,n)$-accessible $\N$-model $\bigl( W_k, \{ \prec_{k,B}\}_{B \in \MF}, \Vdash_k   \bigr)$  falsifying $A_k$ 
and  let $\mathcal{M} = \bigl( W, \{\prec_{B}\}_{B \in \MF}, \Vdash \bigr)$ be the $(m,n)$-accessible $\N$-model defined as a disjoint union of these models.
We prepare the $\PA$-provably recursive function $h'$ constructed from the enumerations $\langle A_k  \rangle_{k \in \omega}$ and $\bigl \{ \bigl( W_k, \{ \prec_{k,B}\}_{B \in \MF}, \Vdash_k   \bigr) \bigr \}_{k \in \omega}$.

By the recursion theorem,
we define a $\PA$-provably recursive function $g_2$ corresponding to Theorem \ref{thm4-6}.
In the definition of $g_2$, we use 
the $\Sigma_1$ formula $\PR_{g_2}(x) \equiv$ $\exists y (g_2(y)=x \wedge \Fml(x))$
and the arithmetical interpretation $f_2$ based on $\PR_{g_2}(x)$ such that 
$f_2(p) \equiv \exists x (S'(x) \wedge x \neq 0 \wedge x \Vdash p)$.
%For each $r \in \omega$ and $\varphi$, $\PR_{g_2}^r(\gn{\varphi})$ is defined in the same way as $\PR_{g_0}^r(\gn{\varphi})$.
% In Procedure 1, $g_2$ outputs all formulas which are provable from $T_{\sigma}$.
% Also, if $g_2$ does not output $\PR_{g_2}^{m-1}(\gn{f_2(B)})$ in Procedure 2, then $\PR_{g_2}^{n-1}(\gn{f_2(B)})$ is not output by $g_2$ in Procedure 2.

\medskip

\textbf{Procedure 1.}

Stage $s$:
\begin{itemize}
\item If $h'(s+1) =0$,
\begin{equation*}
  g_2(s)  = \begin{cases}
       \varphi & \text{if}\ s\ \text{is a}\ T_\sigma\text{-proof of}\ \varphi, \\
               0 & \text{otherwise}.
             \end{cases}
  \end{equation*}

Then, go to Stage $s+1$.

\item If $h'(s+1) \neq 0$, go to Procedure 2.
\end{itemize}

\textbf{Procedure 2.}

Suppose $s$ and $i \neq 0$ satisfy $h'(s)=0$ and $h'(s+1)=i$. 
Let $k$ be a number such that $i \in W_k$. 
Define 

\begin{equation*}
g_2(s+t)= \begin{cases} 0 & \text{if}\  \xi_t \equiv f_2(B)\  \&\ i \nVdash_k \Box B \ \text{for some}\ \Box B \in \mathsf{Sub}(A_k) \\
  &  \quad \text{or}\ \xi_t \equiv \PR_{g_2}^{n-1}(\gn{f_2(B)}) \ \&\   \ g_2(s+u)=0, \\		                    
  & \quad \quad \text{where} \ \xi_u \equiv \PR_{g_2}^{m-1}(\gn{f_2(B)}) \ \text{for some}\  B \in \MF \ \text{and}\ u<t, \\
  \xi_t & \text{otherwise}.
		\end{cases}
\end{equation*}
The construction of $g_2$ has been finished.
We briefly remark on the definition of $g_2$.
% In Procedure 1, $g_2$ outputs all formulas which are provable from $T_{\sigma}$.
The first condition of Procedure 2 is slightly different from the previous ones.
The function $g_2$ establishes the correspondence between $\neg \Box B$ and $\neg \PR_{g_2}(\gn{f_2(B)})$
by not outputting $f_2(B)$ such that $i \nVdash \Box B$.
Also, if $g_2$ does not output $\PR_{g_2}^{m-1}(\gn{f_2(B)})$ in Procedure 2, then $\PR_{g_2}^{n-1}(\gn{f_2(B)})$ is not output by $g_2$ in Procedure 2,
which ensures the condition $\PA \vdash \PR_{g_1}^n(\gn{\varphi}) \to \PR_{g_1}^m(\gn{\varphi})$.
\medskip

The following claim guarantees that $\PR_{g_2}(x)$ is a $\Sigma_1$ provability predicate of $T$.
\begin{cl}\label{cl:6-4}
For any formula $\varphi$,
$\PA + \Con_{\sigma} \vdash \PR_{g_2}(\gn{\varphi}) \leftrightarrow \PR_{\sigma}(\gn{\varphi})$.
\end{cl}

\begin{proof}
We work in $\PA + \Con_{\sigma}$: By Proposition \ref{prop:h'}.2, the definition of $g_2$ never switches to Procedure 2.
Hence, for any $s$, $g_2(s)= \varphi$ if and only if $s$ is a proof of $\varphi$ in $T_\sigma$. 
\end{proof}

\begin{cl}\label{cl:6-5}
For any $C \in \MF$,
\[
\PA \vdash \exists x \bigl( x \neq 0 \wedge S'(x) \wedge \exists y(x \in W_y \wedge x \Vdash_y \Box C) \bigr) \to f_2(\Box C).
\]
\end{cl}
\begin{proof}
Let $m' \geq 1$ and let $D$ be such that $\Box C \equiv \Box^{m'} D$,
where $D$ is not a boxed formula.
Here, $m' \geq 1$ denotes the maximum number of the outermost consecutive boxes of $\Box C$.
We distinguish the following two cases.
\paragraph{Case 1:} $m' < n$.

We work in $\PA$: Let $i \neq 0$ and $k$ be such that $S'(\num{i})$, $i \in W_k$, and $i \Vdash_k \Box C$. 
Let $s$ be such that $h'(s) =0$ and $h'(s+1) =i$. Then, the definition of $g_2$ goes to Procedure 2 at Stage $s$.
We show that $f_2(C)$ is output by $g_2$ in Procedure 2.
Suppose, towards a contradiction, that $f_2(C)$ is not output by $g_2$ in Procedure 2.
Let $\xi_t \equiv f_2(C)$. Then, $g_2(s+t) = 0$. We consider the following two cases.
\begin{itemize}
\item 
$\xi_t \equiv f_2(E)$ and $i \nVdash_k \Box E$ for some $\Box E \in \Sub(A_k)$: Then, we obtain $C \equiv E$.
Hence, $i \nVdash \Box C$ holds, which is a contradiction.
\item 
$\xi_t \equiv \PR_{g_2}^{n-1}(\gn{f_2(E)})$ and  $g_2(s+u)=0$, where $\xi_u \equiv \PR_{g_2}^{m-1}(\gn{f_2(E)})$ for some $E \in \MF$ and $u<t$:
Since $f_2(C) \equiv \PR_{g_2}^{n-1}(\gn{f_2(E)})$, we obtain $C \equiv \Box^{n-1}E$. Since $\Box C \equiv \Box^{n} E$, this contradicts  the assumption that $m' <n $.
\end{itemize}
Therefore, $f_2(C)$ is output by $g_2$ in Procedure 2.

\paragraph{Case 2:} $m' \geq n$.

Since $m'-m \geq n-m$, we can uniquely find numbers $r<n-m$ and $q \geq 1$ such that $m'-m = q(n-m) +r$.
Hence, $\Box^{m'} D \equiv \Box^{m} \Box^{q(n-m)}\Box^r D$. Let $\Box^{r}D \equiv E$. Then, it follows that $\Box^{m'} D \equiv \Box^{q(n-m)} \Box^{m} E$.

We work in $\PA$: Let $i \neq 0$ and $k$ be such that $S'(\num{i})$, $i \in W_k$, and $i \Vdash_k \Box C$. 
Let $s$ be such that $h'(s) =0$ and $h'(s+1) =i$. Then, the definition of $g_2$ switches to Procedure 2 on Stage $s$.
We prove the following subclaims.
\begin{scl}\label{subcl1}
For each $0 \leq j \leq q$, we have $i \Vdash_k \Box^{(q-j) (n-m)} \Box^{m} E$.
\end{scl}
\begin{proof}
	We prove the statement by  induction on $j \leq q$.
	If $j=0$, then we obtain $i \Vdash_k \Box^{q(n-m)}\Box^m E$ by $i \Vdash_k \Box C \equiv \Box^{m'} D$.
	We prove on $j +1 \leq q$.
	By the induction hypothesis, we obtain $i \Vdash_k \Box^{(q-j)(n-m)}\Box^m E$,
	and we get $i \Vdash_k \Box^{n} \Box^{(q-j-1)(n-m)} E$. 
	Then, it follows from $\mathrm{Acc}_{m,n}$ that $i \Vdash_k \Box^{m} \Box^{(q-(j+1))(n-m)} E$.
\end{proof}

\begin{scl}\label{subcl2}
\leavevmode
For each $0 \leq j \leq q$, $f_2(\Box^{j(n-m)}\Box^{m-1}E)$
is output by $g_2$ in Procedure 2.

%Let $\xi_{t_j}$ denote $f_2(\Box^{j(n-m)}\Box^{m-1}E)$.
%Then, for each $0 \leq j \leq q$, $g_2(s+t_j)= \xi_{t_j}$. 

\end{scl}
\begin{proof}
For each $0 \leq j \leq q$, let $t_j$ be such that $\xi_{t_j} \equiv f_2(\Box^{j(n-m)}\Box^{m-1}E)$.
We prove $g_2(s+t_j)= \xi_{t_j}$ by induction on $j \leq q$. We prove the base case $j=0$. Suppose, towards a contradiction, that $g_2(s+t_0)=0$.
Since $r < n-m$, we have $m-1+r< n-1$. Then, $\xi_{t_0} \equiv \PR_{g_2}^{m-1}(\gn{f_2(E)})$ is not of the form $\PR_{g_2}^{n-1}(\gn{f_2(G)})$ for any $G$.
Hence, by $g_2(s+t_0)=0$, it follows that 
$\xi_{t_0} \equiv f_2(F)$ and $i \nVdash_k \Box F$ for some $\Box F \in \Sub(A_k)$.
Since $f_2(\Box^{m-1} E) \equiv f_2(F)$, it follows that $\Box^{m-1} E \equiv F$. Hence, we obtain $i \nVdash_k \Box^m E$.
This contradicts Subclaim \ref{subcl1}. Thus, we get $g_2(s+t_0)= \xi_{t_0}$.

Next, we prove the induction case $j+1 \leq q$. 
By the induction hypothesis, we obtain $g_2(s+t_j)= \xi_{t_j}$. Suppose, towards a contradiction, that $g_2(s+t_{j+1})=0$.
We consider the following two cases.
\begin{itemize}
\item $\xi_{t_{j+1}} \equiv f_2(F)$ and $i \nVdash_k \Box F$ hold for some $\Box F \in \Sub(A_k)$: 
We get $\Box^{(j+1)(n-m)} \Box^{m-1}E \equiv F$. Thus, we obtain $i \nVdash_k \Box^{(j+1)(n-m)} \Box^{m}E$, which contradicts Subclaim \ref{subcl1}.

\item $\xi_{t_{j+1}} \equiv \PR_{g_2}^{n-1}(\gn{f_2(F)})$ and $g_2(s+u)=0$, where $\xi_u \equiv \PR_{g_2}^{m-1}(\gn{f_2(F)})$ for some $F \in \MF$ and $u< t_{j+1}$:

Then, we obtain $\Box^{n-1}F \equiv \Box^{(j+1)(n-m)} \Box^{m-1}E$. It follows that $\Box^{m-1}F \equiv \Box^{j(n-m)} \Box^{m-1}E$, so we obtain $\xi_u \equiv \xi_{t_j}$ and $u=t_j$.
Hence, we obtain $g_2(s+t_j) = g_2(s+u) =0$. This contradicts $g_2(s+t_j)=\xi_{t_j}$.
\end{itemize}
We have proved $g_2(s+t_{j+1})= \xi_{t_{j+1}}$.
\end{proof}
By Subclaim \ref{subcl2}, we obtain $f_2(C)$ is output by $g_2$, that is, $f_2(\Box C)$ holds.
\end{proof}

\begin{cl}\label{cl:6-6}
Let $B \in \MF$.
\begin{enumerate}
\item 
$\PA \vdash \exists x \bigl( x \neq 0 \wedge S'(x) \wedge \exists y(x \in W_y \wedge B \in \Sub(A_y) \wedge x \Vdash_y B)  \bigr) \to f_2(B)$.
\item 
$\PA \vdash \exists x \bigl( x \neq 0 \wedge S'(x) \wedge \exists y(x \in W_y \wedge B \in \Sub(A_y) \wedge x \nVdash_y B)  \bigr) \to \neg f_2(B)$.
\end{enumerate}

\end{cl}
\begin{proof}
We prove Clauses 1 and 2 simultaneously by induction on the construction of $B \in \Sub(A_k)$.
First, we prove the base step of the induction. In the case $B \equiv \bot$, Clauses 1 and 2 are easily proved.
Suppose $B \equiv p$. Clause 1 holds trivially. We only give a proof of Clause 2. 
\begin{enumerate}
\item[2.]
By Proposition \ref{prop:h'}.1, we obtain 
\begin{align*}
\PA \vdash x \neq 0 \wedge S'(x) \wedge x \in W_y \wedge x \nVdash_y p &\to \bigl( S'(z) \wedge z\neq 0 \to z=x \bigr) \\
                                                                       &\to \forall z \bigl( S'(z) \wedge z\neq 0 \to z \nVdash p \bigr).
\end{align*}
Then, we get 
\[
\PA \vdash \exists x \bigl( x \neq 0 \wedge S'(x) \wedge \exists y(x \in W_y \wedge p \in \Sub(A_y) \wedge x \nVdash_y p)  \bigr) \to \neg f_2(p).
\]
\end{enumerate}
Next, we prove the induction step. The cases of $\neg$, $\wedge$, $\vee$, and $\to$ are easily proved.
We prove the case $B \equiv \Box C$.

1. We obtain Clause 1 by Claim \ref{cl:6-5}.
 
2. If $\mathbf{N} \mathbf{A}_{m,n} \vdash \Box C$, then $\PA \vdash \forall x \forall y(x \in W_y \to x \Vdash_y \Box C)$ holds by formalizing the soundness of $\NA_{m,n}$.
Hence, Clause 2 trivially holds.
Therefore, we may assume that $\mathbf{N} \mathbf{A}_{m,n} \nvdash \Box C$.
Then, there exists  $ j \in \omega$ such that $\Box C \equiv A_j$. Let $\bigl( W_j, \{ \prec_{j,D} \}_{D \in \MF}, \Vdash_j  \bigr)$ be a countermodel of $A_j$
and $r \in W_j$ be such that $r \nVdash_j \Box C$.
Then, there exists  $l \in W_j$ such that $r \prec_{j,C} l$ and $l \nVdash_j C$.
Thus, we obtain 
\[
\PA \vdash \num{l} \neq 0 \wedge \exists y(\num{l} \in W_y \wedge C \in \Sub(A_y) \wedge \num{l} \nVdash_y C), 
\]
which implies that $\PA \vdash \varphi_C(\num{l}) \to \neg S'(\num{l})$. 
Also, since 
\[
\PA \vdash \exists x \neg \varphi_C(x) \leftrightarrow \exists x \bigl( x \neq 0 \wedge S'(x) \wedge \exists y(x \in W_y \wedge C \in \Sub(A_y) \wedge x \nVdash_y C)  \bigr),
\]
it follows from the induction hypothesis that $\PA \vdash \exists x \neg \varphi_C(x) \to \neg f_2(C)$.
Let $p$ and $q$ be $T$-proofs of $\varphi_C(\num{l}) \to \neg S'(\num{l})$ and $\exists x \neg \varphi_C(x) \to \neg f_2(C)$ respectively.

We work in $\PA$: Let $i \neq 0$ and $k$ be such that
$S'(\num{i})$, $i \in W_k$, $\Box C \in \Sub(A_k)$ and $i \nVdash_k \Box C$.
Let $s$ be such that $h'(s)=0$ and $h'(s+1)=i$. Suppose, towards a contradiction, that $f_2(C)$ is output by $g_2$.
We consider the following two cases.
\begin{itemize}
\item 
$f_2(C)$ is output in Procedure 1:

Then, we obtain $f_2(C) \in P_{\sigma, s-1}$ and it follows from Proposition \ref{prop:h'} that $q \leq s-1$. Hence,  $\exists x \neg \varphi_C(x) \to \neg f_2(C)\in P_{\sigma,s-1}$ holds, which implies that
$\forall x \varphi_C(x)$ is a t.c.~of $P_{\sigma, s-1}$.
Since $p \leq s-1$ by Proposition \ref{prop:h'}, $\varphi_C(\num{l}) \to \neg S'(\num{l})$ is a t.c.~of $P_{\sigma, s-1}$.
We also have $l \in W_j$ and $C \in \Sub(A_j)$.
Hence, it follows  that $h'(s) \neq 0$,
 which is a contradiction.

\item 
$f_2(C)$ is output in Procedure 2: 

Let $\xi_u \equiv f_2(C)$.
Since $\Box C \in \Sub(A_k)$, $\xi_u \equiv f_2(C)$ and $i \nVdash_k \Box C$, we obtain $g_2(s+u)$=0, which means that $f_2(C)$ is never output in Procedure 2. This is a contradiction.
\end{itemize}
Therefore, $f_2(C)$ is not output by $g_2$, that is, $\neg f_2(\Box C)$ holds.
\end{proof}

\begin{cl}\label{cl:6-7}
For any formula $\varphi$, $T \vdash \PR_{g_2}^n (\gn{\varphi}) \to \PR_{g_2}^m (\gn{\varphi})$.
\end{cl}
\begin{proof}
We consider the following two cases.
\paragraph{Case 1:} $\varphi$ is not of the form $f_2(A)$ for any $A \in \mathsf{MF}$.

We work in $T$: By the definition of $\sigma(x)$, $\neg \Con_{\sigma}$ holds. Then, there exist some $i \neq 0$ and $s$
such that $h'(s)=0$ and $h'(s+1)=i$ by Proposition \ref{prop:h'}.2. Let $\xi_u \equiv \PR_{g_2}^{m-1}(\gn{\varphi})$.
Then, $\xi_u$ is not of the form $f_2(B)$ for any $B \in \mathsf{MF}$. 
Thus, we obtain $g_2(s+u)= \xi_u$. It follows that $\PR_{g_2}^m(\gn{\varphi})$ holds.

\paragraph{Case 2:} $\varphi$ is of the form $f_2(A)$ for some $A \in \mathsf{MF}$.

We further distinguish the following two cases.
\begin{itemize}
\item 
$\mathbf{N} \mathbf{A}_{m,n} \vdash \Box^n A$: 

Then, we obtain $\mathbf{N} \mathbf{A}_{m,n} \vdash \Box^m A$ by $\mathrm{Acc}_{m,n}$. This implies
\begin{equation}\label{4}
\PA \vdash \forall x \forall y (x \in W_y \to x \Vdash_y \Box^m A).
\end{equation}
We work in $T$: Since $\neg \Con_{\sigma}$ holds, we get $S'(\num{i})$ for some $i \neq 0$.
Let $i \in W_k$. Then,  $i \Vdash_k \Box^m A$  holds by (\ref{4}).
Hence, it follows from Claim \ref{cl:6-5} that $f_2(\Box^m A)$ holds, which means that $\PR_{g_2}^m (\gn{f_2(A)})$ holds.
\item 
$\mathbf{N} \mathbf{A}_{m,n} \nvdash \Box^n A$: 

Let $j$ be such that $\Box^n A \equiv A_j$ and $\bigl( W_j, \{ \prec_{j,B}\}_{B \in \MF}, \Vdash_j \bigr)$ be a countermodel of $A_j$.
Thus we obtain $r \nVdash_j \Box^n A$ for some $r \in W_j$, and $l \nVdash_j \Box^{n-1}A$ holds for some $l \in W_j$.
As in the proof of Claim \ref{cl:6-6}, we obtain $\PA \vdash \varphi_{\Box^{n-1}A}(\num{l}) \to \neg S'(\num{l})$.
From Claim \ref{cl:6-6}.2, it follows that $\PA \vdash \exists x \neg \varphi_{\Box^{n-1}A}(x) \to \neg f_2(\Box^{n-1}A)$.
Let $p$ and $q$ be $T$-proofs of  $\varphi_{\Box^{n-1}A}(\num{l}) \to \neg S'(\num{l})$ and $\exists x \neg \varphi_{\Box^{n-1}A}(x) \to \neg f_2(\Box^{n-1}A)$ respectively.
We prove $T \vdash \neg \PR_{g_2}^m (\gn{f_2(A)}) \to \neg \PR_{g_2}^n (\gn{f_2(A)})$.

We work in $T$: It follows that $\neg \Con_{\sigma}$ holds. 
Let $i \neq 0$ and let $s$ be such that $h'(s)=0$ and $h'(s+1)=i$.
Set $\xi_t \equiv f_2(\Box^{n-1} A)$ and $\xi_u \equiv f_2(\Box^{m-1} A)$ for $u<t$.
Suppose, towards a contradiction, that $ \neg \PR_{g_2}(\gn{f_2(\Box^{m-1}A)})$ and $ \PR_{g_2}(\gn{f_2(\Box^{n-1}A)})$ hold.
Then, $f_2(\Box^{n-1}A)$ is output by $g_2$.
Since $\xi_u$ is not output by $g_2$, we obtain $g_2(s+u)$=0. Thus, we get $g_2(s+t)=0$.
Therefore, $f_2(\Box^{n-1}A)$ is output by $g_2$ in Procedure 1, and $f_2(\Box^{n-1}A) \in P_{\sigma, s-1}$ holds.
Also, since $q \leq s-1$, we obtain $\exists x \neg \varphi_{\Box^{n-1}A}(x) \to \neg f_2(\Box^{n-1}A) \in P_{\sigma, s-1}$. 
Thus, $\forall x \varphi_{\Box^{n-1}A}(x)$ is a t.c.~of $P_{\sigma, s-1}$.
By $p \leq s-1$, $\varphi_{\Box^{n-1}A}(\num{l}) \to \neg S'(\num{l})$ is a t.c.~of $P_{\sigma, s-1}$.
Hence, it follows from $\Box^{n-1}A \in \Sub(A_j)$ and $l \in W_j$ that $h'(s) \neq 0$. This is a contradiction.
We conclude that $\neg \PR_{g_2}^m (\gn{f_2(A)})$ implies $\neg \PR_{g_2}^n (\gn{f_2(A)})$.
\end{itemize}
In either case, we have proved $T \vdash \PR_{g_2}^n (\gn{\varphi}) \to \PR_{g_2}^m (\gn{\varphi})$.
\end{proof}
We finish our proof of Theorem \ref{thm4-6}. 
Clause 1 and the implication $(\Rightarrow)$ of Clause 2 trivially hold by Claim \ref{cl:6-4} and Claim \ref{cl:6-7}.  

We prove the implication $(\Leftarrow)$ of Clause 2. Suppose $\mathbf{N} \mathbf{A}_{m,n} \nvdash A$. 
Then, there exists a $k \in \omega$ such that $A \equiv A_k$.
Let $\bigl( W_k, \{\prec_{k,B}\}_{B \in \MF}, \Vdash_k  \bigr)$ be a countermodel of $A_k$ and $j \in W_k$ be $j \nVdash_k A_k$.
Hence, we obtain 
\[
\PA \vdash \num{j} \neq 0 \wedge \exists y \bigl( \num{j} \in W_y \wedge A \in \Sub(A_y) \wedge \num{j} \nVdash_y A \bigr)
\]
and it follows that $\PA \vdash \varphi_A (\num{j}) \to \neg S'(\num{j})$.
By using the contrapositive of Claim \ref{cl:6-6}.2, we get
$\PA \vdash f_2(A)  \to \forall x \varphi_A (x)$, which implies $\PA \vdash f_2(A)  \to  \varphi_A (\num{j})$.
Then, we obtain $\PA \vdash f_2(A) \to \neg S'(\num{j})$.
From Proposition \ref{prop:h'}.3, $T \nvdash f_2(A)$ holds.
\end{proof}
%\begin{cor}
%Suppose $T$ is not $\Sigma_1$-sound and $n > m \geq 1$.
%There exists a $\Sigma_1$ provability predicate $\PR_T(x)$ such that
%$\NA_{m,n} = \PL(\PR_T)$.\end{cor}

\subsection{The case $m \geq 1$ and $n = 0$}
In this subsection, we verify the following theorem, which is Theorem \ref{thm4-3} with respect to the case $m > n = 0$.
%In fact, we have already given a proof of the case $m > n \geq 1$ in Theorem \ref{thm4-4}.
\begin{thm}\label{thm4-7}
	Suppose that $T$ is $\Sigma_1$-ill and $m \geq 1$. Then, there exists a $\Sigma_1$ provability
	predicate $\PR_T(x)$ of $T$ 
    such that
\begin{enumerate} 
\item 
for any $A \in \mathsf{MF}$ and arithmetical interpretation $f$ based on $\PR_T(x)$,
if $\mathbf{N}\mathbf{A}_{m,0} \vdash A$, then $T \vdash f(A)$, and 
\item 
there exists an arithmetical interpretation $f$ based on $\PR_T(x)$ such that 
$\mathbf{N}\mathbf{A}_{m,0} \vdash A$ if and only if $T \vdash f(A)$.
\end{enumerate}
\end{thm}
In fact, we can prove Theorem \ref{thm4-3} with respect to the more general case $m > n \geq 0$ in the same way as the proof of Theorem \ref{thm4-7}.  
%we study arithmetical completeness theorems of $\NA_{m,n}$ for $n > m \geq 1$.
%Especially, we prove that arithmetical completeness of $\NA_{0,n}$ holds for $\Sigma_1$-ill theories unlike $\Sigma_1$-sound theories.
%We verify the following uniform version of the arithmetical completeness theorem.
%\begin{thm}\label{thm:5-6}
%Suppose that $T$ is not $\Sigma_1$-sound and $m>n\geq 0$. There exists a $\Sigma_1$ provability
%predicate $\PR_T(x)$ of $T$ such that
%\begin{enumerate} 
%\item 
%for any $A \in \mathsf{MF}$ and arithmetical interpretation $f$ based on $\PR_T(x)$,
%if $\mathbf{N}^+\mathbf{A}_{m,n} \vdash A$, then $T \vdash f(A)$, and 
%\item 
%there exists an arithmetical interpretation $f$ based on $\PR_T(x)$ such that 
%$\mathbf{N}^+\mathbf{A}_{m,n} \vdash A$ if and only if $T \vdash f(A)$.
%\end{enumerate}
	
%\end{thm}
\begin{proof}
Let $m \geq 1$.
We prepare a primitive recursive enumeration of all $\NA_{m,0}$-unprovable formulas $\langle A_k  \rangle_{k \in \omega}$.
As in the proof of Theorem \ref{thm4-2}, for each $k \in \omega$, we can primitive recursively construct a finite $(m,0)$-accessible $\N$-model $\bigl( W_k, \{ \prec_{k,B}\}_{B \in \MF}, \Vdash_k   \bigr)$ which falsifies $A_k$
and let $\mathcal{M} = \bigl( W, \{\prec_{B}\}_{B \in \MF}, \Vdash \bigr)$ be the $(m,0)$-accessible $\N$-model defined as a disjoint union of these models.
We prepare the $\PA$-provably recursive function $h'$ constructed from the enumerations $\langle A_k  \rangle_{k \in \omega}$ and $\bigl \{ \bigl( W_k, \{ \prec_{k,B}\}_{B \in \MF}, \Vdash_k   \bigr) \bigr \}_{k \in \omega}$.

By the recursion theorem,
we define a $\PA$-provably recursive function $g_3$ corresponding to the case $m \geq 1$ and $n=0$.
In the definition of $g_3$, we use
the $\Sigma_1$-formula $\PR_{g_3}(x) \equiv$ $\exists y (g_3(y)=x \wedge \Fml(x))$
and the arithmetical interpretation $f_3$ based on $\PR_{g_3}(x)$ such that 
$f_3(p) \equiv \exists x (S'(x) \wedge x \neq 0 \wedge x \Vdash p)$.
%For each $r \in \omega$ and $\varphi$, $\PR_{g_3}^r(\gn{\varphi})$ is defined in the same way as $\PR_{g_0}^r(\gn{\varphi})$.
The function $g_3$ is defined as follows:
\medskip

\textbf{Procedure 1.}

Stage $s$:
\begin{itemize}
\item If $h'(s+1) =0$,
\begin{equation*}
  g_3(s)  = \begin{cases}
       \varphi & \text{if}\ s\ \text{is a}\ T_{\sigma} \text{-proof of}\ \varphi \\
               0 & \text{otherwise}.
             \end{cases}
  \end{equation*}

Then, go to Stage $s+1$.

\item If $h'(s+1) \neq 0$, go to Procedure 2.
\end{itemize}

\textbf{Procedure 2.}

Suppose $s$ and $i \neq 0$ satisfy $h'(s)=0$ and $h'(s+1)=i$. 
Let $k$ be a number such that $i \in W_k$. 
Define

\begin{equation*}
g_3(s+t)= \begin{cases} 0 & \text{if}\  \xi_t \equiv f_3(B)\  \&\ i \nVdash_k \Box B \ \text{for some}\ \Box B \in \mathsf{Sub}(A_k) \\
 
  \xi_t & \text{otherwise}.
		\end{cases}
\end{equation*}
We finish the construction of $g_3$.

\begin{cl}\label{cl:7-1}
For any formula $\varphi$,
$\PA + \Con_{\sigma} \vdash \PR_{g_3}(\gn{\varphi}) \leftrightarrow \PR_{\sigma}(\gn{\varphi})$.
\end{cl}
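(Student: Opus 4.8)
The plan is to argue entirely inside $\PA + \Con_\sigma$ and to show that, under this hypothesis, the construction of $g_3$ never leaves Procedure 1, so that $g_3$ enumerates precisely the $T_\sigma$-provable formulas. Unwinding the two definitions, $\PR_{g_3}(\gn{\varphi})$ asserts $\exists y\,(g_3(y) = \gn{\varphi})$ together with $\Fml(\gn{\varphi})$, which holds since $\varphi$ is a formula, while $\PR_\sigma(\gn{\varphi})$ asserts $\exists y\,\Prf_\sigma(\gn{\varphi}, y)$. Hence the target biconditional reduces to showing, provably in $\PA + \Con_\sigma$, that $g_3(y) = \gn{\varphi}$ holds for some $y$ exactly when some $y$ is a $T_\sigma$-proof of $\varphi$.

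The key step is to invoke Proposition \ref{prop:h'}.2, which gives $\PA \vdash \neg \Con_\sigma \leftrightarrow \exists x\,(S'(x) \wedge x \neq 0)$. Working under $\Con_\sigma$, I would obtain $\neg \exists x\,(S'(x) \wedge x \neq 0)$, so that $h'(y) = 0$ for every $y$. Since the definition of $g_3$ branches into Procedure 2 only at a stage $s$ with $h'(s+1) \neq 0$, the construction remains in Procedure 1 at every stage. In Procedure 1 the defining clause reads $g_3(s) = \varphi$ iff $s$ is a $T_\sigma$-proof of $\varphi$, and $g_3(s) = 0$ otherwise; consequently $\exists y\,(g_3(y) = \gn{\varphi})$ is equivalent to $\exists y\,\Prf_\sigma(\gn{\varphi}, y)$, which is exactly the desired equivalence.

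I expect no genuine obstacle here: the argument is the verbatim analog of Claim \ref{cl:6-4}, the only difference between $g_2$ and $g_3$ lying in their respective Procedure 2 clauses, both of which are inactive once $\Con_\sigma$ forces $h'$ to vanish identically. The one point requiring a little care is the formalization of ``$h'$ is identically zero under $\Con_\sigma$,'' but this is already packaged in Proposition \ref{prop:h'}.2, so the proof will be short.
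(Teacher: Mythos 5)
Your proposal is correct and follows exactly the paper's route: the paper proves Claim \ref{cl:7-1} by reducing it to the argument of Claim \ref{cl:6-4}, which likewise uses Proposition \ref{prop:h'}.2 inside $\PA + \Con_{\sigma}$ to conclude that the construction never leaves Procedure~1, so that $g_3(s) = \varphi$ iff $s$ is a $T_{\sigma}$-proof of $\varphi$. No gaps; your added remark about $\Fml(\gn{\varphi})$ holding trivially is a harmless explicit detail the paper leaves implicit.
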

\begin{proof}
This is proved as in the proof of Claim \ref{cl:6-4}.
\end{proof}
Claim \ref{cl:7-1} guarantees that $\PR_{\sigma}(x)$ is a $\Sigma_1$ provability predicate.	
	
\begin{cl}\label{cl:7-2}
Let $B \in \MF$.
\begin{enumerate}
\item 
$\PA \vdash \exists x \bigl( x \neq 0 \wedge S'(x) \wedge \exists y(x \in W_y \wedge B \in \Sub(A_y) \wedge x \Vdash_y B)  \bigr) \to f_3(B)$.
\item 
$\PA \vdash \exists x \bigl( x \neq 0 \wedge S'(x) \wedge \exists y(x \in W_y \wedge B \in \Sub(A_y) \wedge x \nVdash_y B)  \bigr) \to \neg f_3(B)$.
\end{enumerate}
	
\end{cl}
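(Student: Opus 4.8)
The plan is to prove Clauses 1 and 2 simultaneously by induction on the construction of $B \in \Sub(A_k)$, following the template of Claim \ref{cl:6-6} but exploiting the fact that Procedure 2 of $g_3$ is simpler than that of $g_2$: it discards a formula only in the single ``falsified box'' case, lacking the second clause (involving $\PR_{g_2}^{n-1}$ and $\PR_{g_2}^{m-1}$) present in the definition of $g_2$. For the base case $B \equiv \bot$ both clauses are immediate, and for $B \equiv p$ Clause 1 holds by the definition of $f_3(p)$ while Clause 2 follows from the uniqueness of the nonzero witness of $S'$ guaranteed by Proposition \ref{prop:h'}.1, exactly as in Claim \ref{cl:6-6}. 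The propositional connective cases ($\neg, \wedge, \vee, \to$) are routine and I would dispatch them in one line.

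The first genuinely modal case is Clause 1 for $B \equiv \Box C$, and here the simplified shape of $g_3$ makes the argument direct, so no counterpart of Claim \ref{cl:6-5} (and no appeal to $(m,0)$-accessibility) is needed. Assuming the antecedent, I fix $i \neq 0$ with $S'(\num{i})$, $i \in W_k$, $\Box C \in \Sub(A_k)$, and $i \Vdash_k \Box C$; by Proposition \ref{prop:h'}.2 the construction of $g_3$ switches to Procedure 2 at the stage $s$ with $h'(s) = 0$ and $h'(s+1) = i$. Writing $\xi_t \equiv f_3(C)$, the only way to have $g_3(s+t) = 0$ would be the existence of $\Box B \in \Sub(A_k)$ with $f_3(B) \equiv \xi_t$ and $i \nVdash_k \Box B$; injectivity of $f_3$ forces $B \equiv C$, so this would require $i \nVdash_k \Box C$, contradicting $i \Vdash_k \Box C$. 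Hence $g_3(s+t) = \xi_t = f_3(C)$ is output, so $f_3(\Box C)$ holds.

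For Clause 2 with $B \equiv \Box C$ I would first dispose of the case $\NA_{m,0} \vdash \Box C$ by formalizing the soundness of $\NA_{m,0}$, which makes the antecedent $\PA$-refutable and the clause vacuous. Otherwise $\Box C \equiv A_j$ for some $j$ and a countermodel supplies $l \in W_j$ with $l \nVdash_j C$, whence $\PA \vdash \varphi_C(\num{l}) \to \neg S'(\num{l})$; the induction hypothesis for $C$ together with the $\PA$-provable equivalence between $\exists x \neg \varphi_C(x)$ and the antecedent of Clause 2 gives $\PA \vdash \exists x \neg \varphi_C(x) \to \neg f_3(C)$. Fixing $T$-proofs $p, q$ of these two facts and arguing in $\PA$ under $S'(\num{i})$, $i \in W_k$, $\Box C \in \Sub(A_k)$, $i \nVdash_k \Box C$ with switching stage $s$, I assume for contradiction that $f_3(C)$ is output and split on where. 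If it is output in Procedure 2 then at $\xi_t \equiv f_3(C)$ the exception clause fires, since $\Box C \in \Sub(A_k)$ and $i \nVdash_k \Box C$, giving $g_3(s+t) = 0$, a contradiction. If it is output in Procedure 1 then $f_3(C) \in P_{\sigma, s-1}$, and by Proposition \ref{prop:h'}.4 we have $p, q \leq s-1$, so $\forall x \varphi_C(x)$ and $\varphi_C(\num{l}) \to \neg S'(\num{l})$ become t.c.'s of $P_{\sigma, s-1}$; with $l \in W_j$ and $C \in \Sub(A_j)$ this matches the second disjunct in the definition of $h'$ and forces $h'(s) \neq 0$, contradicting $h'(s) = 0$.

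I expect this Procedure~1 subcase to be the delicate point, as it requires threading the bounds $p, q \leq s-1$ through Proposition \ref{prop:h'}.4 and recognizing that the tautological-consequence conditions on $P_{\sigma, s-1}$ are precisely the ones triggering the recursion defining $h'$; the remaining cases are a direct and slightly shortened transcription of Claim \ref{cl:6-6}.
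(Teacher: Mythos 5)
Your proposal is correct and takes essentially the same route as the paper: for Clause 1 with $B \equiv \Box C$ the paper likewise argues that injectivity of $f_3$ blocks the only zeroing case of Procedure 2 of $g_3$ (which would force $i \nVdash_k \Box C$), so $g_3(s+t) = \xi_t \equiv f_3(C)$, and for Clause 2 the paper simply defers to the proof of Clause 2 of Claim \ref{cl:6-6}, which your soundness/countermodel argument with the proofs $p,q$, the bound from Proposition \ref{prop:h'}.4, and the Procedure 1/Procedure 2 case split transcribes faithfully (including the correct observation that no analogue of Claim \ref{cl:6-5} or of $(m,0)$-accessibility is needed for Clause 1).
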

\begin{proof}
By induction on the construction of $B \in \Sub(A_k)$, we prove clauses 1 and 2 simultaneously.
We only give a proof of the case $B \equiv \Box C$ for some $C \in \MF$.
\begin{enumerate}
\item 
We work in $\PA$:  Let $i \neq 0$ and $k$ be such that
$S'(\num{i})$, $i \in W_k$, $\Box C \in \Sub(A_k)$ and $i \Vdash_k \Box C$. Let $s'$ be such that $h'(s)=0$ and $h'(s+1) =i$.
Set $\xi_t \equiv f_3(C)$. 
Since $f_3$ is injective, we obtain $g_3(s+t)= \xi_t$, so $f_3(\Box C)$ holds.

\item 
This is verified as in the proof of clause 2 in Claim \ref{cl:6-6}.
\qedhere
\end{enumerate}
\end{proof}

\begin{cl}\label{cl:7-3}
For any formula $\varphi$, $T \vdash \varphi \to \PR_{g_3}^m (\gn{\varphi})$.
\end{cl}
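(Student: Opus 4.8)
The plan is to argue entirely inside $T$, exploiting that $T \vdash \neg \Con_{\sigma}$ holds by the very choice of $\sigma(x)$. First I would apply Proposition \ref{prop:h'}.2 to obtain some $i \neq 0$ with $S'(\num{i})$, together with $k$ and $s$ such that $i \in W_k$, $h'(s)=0$ and $h'(s+1)=i$; consequently the definition of $g_3$ switches to Procedure 2 at Stage $s$. Since $\PR_{g_3}^m(\gn{\varphi})$ is by definition $\PR_{g_3}(\gn{\PR_{g_3}^{m-1}(\gn{\varphi})})$, the goal reduces, under the assumption $\varphi$, to showing that $\PR_{g_3}^{m-1}(\gn{\varphi})$ is output by $g_3$. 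As we are already in Procedure 2, this amounts to checking that the instruction defining $g_3(s+t)$ for the index $t$ with $\xi_t \equiv \PR_{g_3}^{m-1}(\gn{\varphi})$ returns $\xi_t$ rather than $0$.

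The output is suppressed only through the single exceptional clause of Procedure 2, namely when $\PR_{g_3}^{m-1}(\gn{\varphi}) \equiv f_3(B)$ with $\Box B \in \Sub(A_k)$ and $i \nVdash_k \Box B$. I would therefore split into two cases. If this clause is \emph{not} triggered, then $g_3(s+t)=\xi_t$, so $\PR_{g_3}^{m-1}(\gn{\varphi})$ is output and $\PR_{g_3}^m(\gn{\varphi})$ holds outright; the hypothesis $\varphi$ is not needed here, which is harmless since we are proving an implication.

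The remaining case is where the exceptional clause applies, and this is the main obstacle. Here I would use that $f_3$ is an injection satisfying $f_3(\Box D) \equiv \PR_{g_3}(\gn{f_3(D)})$: peeling off the $m-1$ outermost occurrences of $\PR_{g_3}$ forces $B \equiv \Box^{m-1}A$ and $\varphi \equiv f_3(A)$ for some $A \in \MF$, with $\Box B \equiv \Box^m A \in \Sub(A_k)$ and $i \nVdash_k \Box^m A$. Since $A \in \Sub(A_k)$, I would then formalize the soundness of $\NA_{m,0}$ over $(m,0)$-accessible models (exactly as in the proof of Claim \ref{cl:6-6}, relying on Fact \ref{fact2-7}) to obtain $i \Vdash_k A \to \Box^m A$, the instance $\mathrm{Acc}_{m,0}$ being a theorem of $\NA_{m,0}$; combined with $i \nVdash_k \Box^m A$ this yields $i \nVdash_k A$. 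Applying Claim \ref{cl:7-2}.2 to $A \in \Sub(A_k)$ then gives $\neg f_3(A)$, i.e. $\neg \varphi$, so the implication $\varphi \to \PR_{g_3}^m(\gn{\varphi})$ holds vacuously. Collecting the cases and discharging $\neg \Con_{\sigma}$ via $T \vdash \neg \Con_{\sigma}$ yields $T \vdash \varphi \to \PR_{g_3}^m(\gn{\varphi})$.

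The delicate point throughout is to match the purely syntactic suppression condition of Procedure 2 with the semantic information carried by the countermodels, and to recognize that the suppression occurs precisely when the antecedent $\varphi$ is false. The feature special to $n=0$ — that $\mathrm{Acc}_{m,0}$ reads $A \to \Box^m A$ — is what lets $i \nVdash_k \Box^m A$ propagate back to $i \nVdash_k A$, so that Claim \ref{cl:7-2}.2 can refute $\varphi$; this is the step I expect to require the most care.
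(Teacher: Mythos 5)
Your proposal is correct and follows essentially the same route as the paper: the paper proves the contrapositive $T \vdash \neg \PR_{g_3}^m(\gn{\varphi}) \to \neg\varphi$, while your direct case split on whether Procedure 2's suppression clause fires at the index $t$ with $\xi_t \equiv \PR_{g_3}^{m-1}(\gn{\varphi})$ is the same argument in a different logical dress. All the key ingredients match the paper's proof: Proposition \ref{prop:h'}.2 inside $T$ via $T \vdash \neg\Con_{\sigma}$, peeling the outer occurrences of $\PR_{g_3}$ by injectivity of $f_3$ to get $B \equiv \Box^{m-1}C$ with $f_3(C) \equiv \varphi$, passing from $i \nVdash_k \Box^m C$ to $i \nVdash_k C$ by $(m,0)$-accessibility, and concluding $\neg\varphi$ from Claim \ref{cl:7-2}.2.
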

\begin{proof}
We prove the contrapositive $T \vdash \neg \PR_{g_3}^m (\gn{\varphi}) \to \neg \varphi$.
We work in $T$:	Suppose $\neg \PR_{g_3}^m(\gn{\varphi})$ holds. Since we obtain $\PR_{\sigma}(\gn{0=1})$, there exists an $i \neq 0$ such that $S'(\num{i})$.
Let $k$ and $s$ be such that $i \in W_k$, $h'(s)=0$ and $h'(s+1)=i$. 
Set $\xi_t \equiv \PR_{g_3}^{m-1}(\gn{\varphi})$.
Since $\xi_t$ is not output by $g_3$ in Procedure 2, we get $g_3(s+t)=0$.
Hence, it follows that $\xi_t \equiv f_3(B)$ and $i \nVdash_k \Box B$ for some $\Box B \in \Sub(A_k)$.
Since $f_3(B) \equiv \PR_{g_3}^{m-1}(\gn{\varphi})$, there exists a $C \in \Sub(A_k)$ such that $B \equiv \Box^{m-1}C$ and $f_3(C) \equiv \varphi$.
Thus, we obtain $i \nVdash_k \Box^m C$, which implies $i \nVdash_k  C$ by $\mathrm{Acc}_{m,0}$.
It follows from clause 2 of Claim \ref{cl:7-2} that $\neg f_3(C)$ holds. Hence, we conclude $\neg \varphi$ holds.
\end{proof}
We complete our proof of Theorem \ref{thm4-7}. 
By Claim \ref{cl:7-3}, we obtain the first clause.
Also, the second clause is proved from Proposition \ref{prop:h'}.3 and Claim \ref{cl:7-2} as in the proof of Theorem \ref{thm4-6}.
\end{proof}
\section{Discussion}\label{sec6}
In this paper, we focused on the modal principle $\Box^n A \to \Box^m A$ and
we analyzed the logic $\NA_{m,n}$, which has this principle, from the viewpoint of provability logic.  
The corresponding arithmetical principle 
\[
\mathbf{D3}^n_m: T \vdash \PR_T^n(\gn{\varphi}) \to \PR_T^m(\gn{\varphi})
\]
 is known to be a meaningful condition from the viewpoint of the second incompleteness theorem. 
Kurahashi \cite{Kur25} investigated $\D{3}^n_m$ in the context of the second incompleteness theorem.
He also introduced the following conditions, which are weaker than $\D{2}$:
\begin{align*}
\mathbf{E}: & \ T \vdash \varphi \leftrightarrow \psi \Longrightarrow T \vdash \PR_T(\gn{\varphi}) \leftrightarrow \PR_T(\gn{\psi}), \\
\mathbf{C}: & \ T \vdash \PR_T(\gn{\varphi}) \wedge \PR_T(\gn{\psi}) \to \PR_T(\gn{\varphi \wedge \psi}).
\end{align*}
He proved that if $\PR_T(x)$ satisfies $\D{3}^n_m$ for some $m>n\geq 1$, $\mathbf{E}$, and $\mathbf{C}$, then $T \nvdash \neg \PR_T(\gn{0=1})$ holds, which refined the second incompleteness theorem.
Concerning $\D{3}^n_m$, we obtain the following as a corollary of clause 1 of Theorem \ref{main}.
\begin{cor}
For each $m,n \geq 1$,
\[
\NA_{m,n} = \bigcap \{\PL(\PR_T) \mid \PR_T(x)  \text{ is a } \Sigma_1 \text{ provability predicate satisfying } \D{3}^n_m \}.
\]
\end{cor}
Moreover, concerning the conditions $\mathbf{E}$ and $\mathbf{C}$, 
the following is obtained 
in the author’s recent work \cite{Kog}. Let $\mathbf{EN}$ be the logic obtained from $\mathbf{N}$ by adding the rule $\dfrac{A \leftrightarrow B}{\Box A \leftrightarrow \Box B}$.
Let $\mathbf{ECN}$ be the logic 
obtained from $\mathbf{N}$ by adding the axiom $\Box A \wedge \Box B \to \Box (A\wedge B)$.
\begin{thm}[\cite{Kog}]
\leavevmode

\begin{itemize}
    \item 
    $\mathbf{EN} = \bigcap \{\PL(\PR_T) \mid \PR_T(x)  \text{ is a } \Sigma_1 \text{ provability predicate satisfying } \mathbf{E} \}$.
    \item 
    $\mathbf{ECN} = \bigcap \{\PL(\PR_T) \mid \PR_T(x)  \text{ is a } \Sigma_1 \text{ provability predicate satisfying } \mathbf{E} \text{ and } \mathbf{C} \}$.
\end{itemize} 
\end{thm}
It is natural to ask whether the arithmetical completeness theorems still hold when the principle $\Box^n A \to \Box^m A$ is added to the logics $\mathbf{EN}$ and $\mathbf{ECN}$.
However, these seem to be difficult problems.
In fact, even the arithmetical completeness for the logics $\mathbf{EN4}$ and $\mathbf{ECN4}$ obtained by adding $\Box A \to \Box\Box A$ to $\mathbf{EN}$ and $\mathbf{ECN}$, respectively, is already hard to establish.

Also, let $\mathbf{CN}$ denote the logic obtained by adding the modal counterpart  $ \Box A \wedge \Box B \to \Box (A \wedge B)$ of $\mathbf{C}$ to $\mathbf{N}$.
It is not known whether the arithmetical completeness theorem holds for $\mathbf{CN}$.
However, to prove the arithmetical completeness theorem for $\mathbf{CN}$, the finite frame property of $\mathbf{CN}$ is required.
At present, it is not even known whether $\mathbf{CN}$ has the finite frame property with respect to $\mathbf{N}$-frames.
These issues concerning provability logics will be discussed in detail in a forthcoming paper \cite{KK3}.
% We expect that future research will continue to develop more fine-grained analysis of modal principles that are crucial for the second incompleteness theorem.

\section*{Acknowledgement}
The author would like to thank Taishi Kurahashi for his valuable comments.
\bibliography{namn_ref}
\bibliographystyle{plain}

\end{document}